\pgfplotsset{compat=1.9}
\definecolor{myblue}{rgb}{0.1,0.4,1}
\definecolor{mygreen}{rgb}{0,0.6,0}
\def\g{\gamma}
\def\s{\sigma}
\def\La{{\Lambda}}
\newcommand{\dd}{\text{\rm d}\mkern0.5mu}
\def\R{{\mathbb R}}
\def\Z{{\mathbb Z}}
\def\NN{{\mathbb N}}
\def\E{{\mathbb E}}
\def\P{{\mathbb P}}
\def\one{{\mathbf 1}}
\def\cS{{\mathcal{S}}}
\newcommand{\floor}[1]{\lfloor #1 \rfloor}
\def\N1{{\mathcal{N}_1}}
\newtheorem{teo}{Theorem}[section]
\newtheorem{lema}[teo]{Lemma}
\theoremstyle{definition}
\newtheorem{df}[teo]{Definition}
\theoremstyle{remark}
\newtheorem{obs}[teo]{Remark}
\title{Gibbs measures over permutations \\ of point processes with low density}
\date{}
\author{In{\'e}s Armend{\'a}riz}
\author{Pablo A. Ferrari}
\author{Nicol{\'a}s Frevenza}
\affil{CONICET and Departamento de Matem{\'a}tica, FCEyN,
Universidad de Buenos Aires}
\begin{document}

\maketitle

\begin{abstract}
\noindent We study a model of spatial random permutations over a discrete set of points. Formally, a permutation $\s$ is sampled proportionally to \begin{equation} 
\label{peso}\exp\{-\alpha \sum_x V(\s(x)-x)\},
\end{equation} where $\alpha>0$ is the temperature and $V$ is a non-negative and continuous potential. The most relevant case for physics is when $V(x)=\|x\|^2$, since it is related to Bose-Einstein condensation through a representation introduced by Feynman in 1953. In the context of statistical mechanics, the weights in~\eqref{peso} define a probability when the set of points is finite, but the construction associated to an infinite set is not trivial and may fail without appropriate hypotheses. The first problem is to establish conditions for the existence of such a measure at infinite volume when the set of points is infinite. Once existence is derived, we are interested in establishing its uniqueness and the cycle structure of a typical permutation.

\noindent We here consider the large temperature regime when the set of points is a Poisson point process in $\Z^d$ with intensity $\rho \in(0,1/2)$, and the potential verifies some regularity conditions. In particular, we prove that if $\alpha$ is large enough, for almost every realization of the point process, there exists a unique Gibbs measure that concentrates on finite cycle permutations.

\noindent We then extend these results to the continuous setting, when the set of points is given by a Poisson point process in $\R^d$ with low enough intensity.

\vspace{4pt}

\noindent \textbf{Key words:} Gibbs measures, permutations, finite cycles, Poisson point process.

\end{abstract}
%
%


\section{Introduction} 

We consider a model of spatial random permutations. The interest in these permutations was initially driven by their connection to Bose-Einstein condensation. Richard Feynman \cite{Feynman53} introduced a representation of the Bose gas through trajectories of interacting Brownian motions that evolve over a fixed time interval, starting and finishing at the points of a spatial point process. Several simplifications have been proposed over the years to reduce this representation to spatial random permutations. S{\"u}t{\H{o} showed that macroscopic cycles are present in the ideal Bose gas~(\cite{Suto1,Suto2}). T\'oth~\cite{Toth} relates a particular model of spatial random permutations, the interchange process, to the quantum Heisenberg ferromagnetic model, proving that the existence of macroscopic cycles in the former is equivalent to the presence of spontaneous magnetization for the latter.

Let $\Omega \subset \R^d$ be a locally finite set, that is a set of points such that its intersection with any compact subset of $\R^d$ is finite. We focus on the cases when $\Omega$ equals the integer lattice or is given by a realization of a spatial point process. Let the space state $S_{\Omega}$ given by the set of permutations or bijections $\s\colon \Omega \to \Omega$. We want to consider the probability measure $\mu$ formally given by
\begin{equation} 
\label{medida-formal}
\mu(\s) = \frac{e^{-\alpha H(\s)}}{Z}\,, \hspace{1cm} \s\in S_{\Omega}, \, \alpha>0\,,
\end{equation} 
where $Z$ is a normalization factor and $H$ the Hamiltonian, formally defined by 
\begin{equation} 
\label{Hamiltonian-intro-general}
 H(\s)= \sum_{x \in \Omega} \| \s(x) - x \|^2\,, \hspace{1cm} \s\in S_{\Omega} \,.
\end{equation} 
The Hamiltonian discourages the appearance of large jumps, whose probability decays exponentially. The parameter $\alpha$ is physically interpreted as the temperature of the system, we can also understand it as a degree of penalization of large jumps. Note that if $\Omega$ is finite the measure~\eqref{medida-formal} is well-defined. In general, for infinite $\Omega$, the definition~\eqref{medida-formal} does not make sense. Statistical mechanics provides a standard approach to extending finite volume probabilities to infinite volume. The method consists of specifying what the conditional probabilities of the infinite volume measure given boundary conditions outside a compact set should look like, these so-called specifications are given by~\eqref{medida-formal} plus consistency with the boundary conditions, and then proving that the specifications have weak limits, called Gibbs measures. One of the fundamental problems is to determine whether there exists more than one Gibbs measure.

Once existence and uniqueness of Gibbs measures have been established, the next questions are related to the cycle structure of a typical permutation, and how this depends on parameters such as point density and temperature $\alpha$. These issues are usually studied for identity boundary conditions. In the following, whenever we discuss cycle-lengths properties, the reader should assume that the boundary conditions under consideration are given by the identity. 
In particular, it is interesting to know if infinitely long cycles appear with positive probability, and in this case, whether they are macroscopic, i.e., if their intersection with a large finite box contains a positive density of the points in the box. 

For the model determined by the quadratic Hamiltonian~\eqref{Hamiltonian-intro-general} it is conjectured that, at all temperatures and with identity boundary conditions, a typical permutation will decompose into finite cycles if $d=1,\,2$, whereas for $d\geq 3$, there exists a critical value $\alpha_c$ below which a typical permutation contains an infinite cycle with positive probability. This conjecture can be explained heuristically and it is supported by numerical simulations (see for instance~\cite{Gandolfo-Ruiz-Ueltschi,Grosskinsky-Ueltschi,GUW-quantum}).

The first rigorous results were obtained by Gandolfo, Ruiz and Ueltschi~\cite{Gandolfo-Ruiz-Ueltschi} when the set of points is the integer lattice $\Z^d$. They show that for high enough temperatures all cycles are finite, in any dimension. The case $d=1$ was settled by Biskup and Richthammer in~\cite{Biskup-Rich}. They prove uniqueness of the Gibbs measure associated to identity boundary conditions, at any temperature, and show that it is supported on finite cycle permutations.
They also establish a bijection between ground states (local minima) of the Hamiltonian and extremal Gibbs measures. This one-to-one correspondence is expected to fail in dimensions higher than 1. Armend\'ariz, Ferrari, Groisman and Leonardi~\cite{AFGL} consider the large temperature regime for general strictly convex potentials in $d\ge 2$. They derive the existence and uniqueness of Gibbs measures concentrating on finite cycle permutations for large $\alpha$, determined by the potential.

We are interested in the case when the set of points $\Omega$ is random. In the $1$-dimensional case, Biskup and Richthammer~\cite{Biskup-Rich} show that if $\Omega$ is given by a realization of a point process with a translation invariant distribution, then, as when the points belong to the integer lattice, cycles are almost surely finite at all temperatures. This is a quenched result. 

In the annealed case, when points and permutations are jointly sampled, Betz and Ueltschi proved in~\cite{Betz-Ueltschi-CMP, Betz-Ueltschi-PTRF} that if $d\geq 3$ there exists a critical density of points $\rho_c$ below which a typical permutation has only finite cycles, and above $\rho_c$ it contains macroscopic cycles. The asymptotic behavior of these cycle-lengths is derived in~\cite{Betz-Ueltschi-PD-EJP}. Precisely, the sorted lengths of the cycles, scaled down by $N$, converges to the Poisson-Dirichlet distribution, as was previously proved to be the case for uniformly distributed permutations~\cite{Schramm-trasposiciones}.

We here consider the set of points given by a realization of a Poisson process on $\Z^d$ with intensity $\rho$, that is, for each $x\in\Z^d$ we place a Poisson$(\rho)$ number of points $\theta(x)$ at $x$, independently among locations, and consider the set $\Omega_\theta=\{(x,i) \colon 1\leq i\leq \theta(x),\, x\in \Z^d\}$ hence determined. 
This is a simpler version of a regular Poisson point process on $\R^d$ with intensity $\rho$, obtained by collecting all points in the $d$-dimensional hypercube $x+[0,1)^d$ and placing them at $x$. We study the permutation group of $\Omega_{\theta}$ under the probability induced by~\eqref{Hamiltonian-intro-general}. In this model there might be more than one point per site in $\Z^d$, so the norm appearing in each term of~\eqref{Hamiltonian-intro-general} will measure the distance between the projections of $x$ and $\sigma(x)$, $x\in \Z^d$. In particular, if $x$ and $\sigma(x)$ project onto the same point, then the term indexed by $x$ vanishes. Our first result proves the existence of Gibbs measures for almost every realization of the environment $\{\theta(x)\}_{x\in \Z^d}$, in the large temperature regime and with fixed density $\rho\in (0,1/2)$. We next show that any permutation sampled with respect to this Gibbs measure has finite cycles almost surely with respect to the Poisson point process. We finally derive uniqueness for Gibbs measures supported on the set of finite cycle permutations of $\Omega_\theta$, for almost all $\{\theta(x)\}_{x\in \Z^d}$. 

The results for the discrete quenched case can be extended to the continuous setting when the set of points is a realization of a Poisson point process on $\R^d$ with low enough intensity. Precisely, we show that if the intensity $\rho$ is small enough, in the large temperature regime, there exists a Gibbs measure for almost every realization of the point process, which is unique with the property of being supported on permutations with finite cycle decomposition.

To prove these results we follow the approach introduced by Fern\'andez, Ferrari and Garcia in~\cite{FFG} to study the Peierls contours of the low temperature Ising model as a loss network. Applied to our setting, this method attempts to realize the Gibbs measure as the stationary distribution of an interacting birth and death process Markov process on the space of finite cycles, where a new cycle can only be added  to the configuration if no site in its support is already visited by an existing cycle. This construction requires that the set of cycles determining whether a given cycle $\gamma$ can be born at time $t$, which we call the clan of ancestors, be almost surely finite. To ensure this condition 
the authors in~\cite{FFG,AFGL} introduce a family of subcritical multi-type branching processes that dominate the size of the clan. In the particular case considered here, however, the 
fact that the point process distributes an unbounded number of points in each position of $\Z^d$ implies that we cannot apply the method directly.
Instead, we use the loss network in bounded regions to prove that the specifications on these regions are dominated by a Poisson point process on the space of finite cycles, from where we derive the tightness of the specifications. This proves the existence of Gibbs measures supported on finite cycle permutations, and we also show that there is uniqueness in this class. 

In the next section we introduce the Gibbs formalism for the model and state our main results. In  \S\ref{section-domination} we explain the loss network construction. The results pertaining to the discrete setting are proved in \S\ref{section-existence} and \S\ref{section-uniqueness},  while in \S\ref{section-continuum} we consider the continuous case when the points are in $\R^d$.


\section{Setting and results}

Let $\theta=\{\theta(x)\}_{x\in \Z^d}$ be an i.i.d. sequence of Poisson random variables with mean $\rho$. We will call $\theta(x)$ the multiplicity of site $x \in \Z^d$, and denote by $\P$ and $\E$ the probability and expectation associated to the distribution of $\theta$. 

Given a fixed realization of $\theta$, define
\begin{equation*}
\Omega_{\theta} = \{(x,i)\in \Z^d\times \mathbb{N} \colon \theta(x)>0,\,i=1,\dots,\theta(x)\}, 
\end{equation*}
and let $X: \Omega_\theta\to \Z^d$ be the projection mapping,
\begin{equation*}
X(s)=x\quad\text{if}\quad s=(x,i),\, i\le \theta(x).
\end{equation*}
We will abuse notation and say that a point $s\in \Omega_\theta$ belongs to $\Lambda\subset \Z^d$, $s \in \Lambda$, if $X(s)\in \Lambda$. We write $\Lambda \Subset \Z^d$ to denote that $\La$ is a finite set. In general, we will use the term {\it point} to refer to an element of $\Omega_{\theta}$ and {\it site} for its location in $\Z^d$.

Denote by 
\begin{equation*}
S_\theta:=\{\sigma:\Omega_\theta \to \Omega_\theta \text{ bijective} \}
\end{equation*}
with the topology determined by the distance
\begin{equation*}
d(\s, \s')= 2^{-\min\{\|X(s)\| \colon \s(s)\neq \s'(s)\}}\,,
\end{equation*}
$\min \emptyset= -\infty$, $\|\cdot\|$ the Euclidean norm in $\R^d$. The space $(S_{\theta},d)$ is complete and separable, let ${\cal F}_\theta$ be the Borel $\sigma$-algebra.

Let $V\colon \R^d \to \R$ be a non-negative function, that we call the potential. The Hamiltonian associated to $V$ restricted to the set $\La$ is
\begin{equation} \label{hamiltoniano}
H_{\theta, \La}(\s) = \sum\limits_{s \in \La} V\Big(X(\s(s))-X(s)\Big)\,.
\end{equation} 
For $\xi \in S_{\theta}$ and $\La \Subset \Z^d$, consider the set of permutations that are compatible with the boundary condition $\xi$ at volume $\La$, given by
\begin{equation}
\label{compatible}
S_{\theta, \La}^{\xi}:= \{\s \in S_{\theta} \colon \s^n(s) = \xi^n(s) \text{ for all } s\in \La^c, \, n\in \Z\},
\end{equation} 
where $\s^n$ means the $n$-th fold composition of $\s$ with itself. When $\La \Subset \Z^d$ the set $S_{\theta, \La}^{\xi}$ is also finite. We illustrate these definitions for a particular choice of $\Lambda$, $\theta$ and boundary condition $\xi$ in Figure~\ref{Figura}. 

\begin{center}
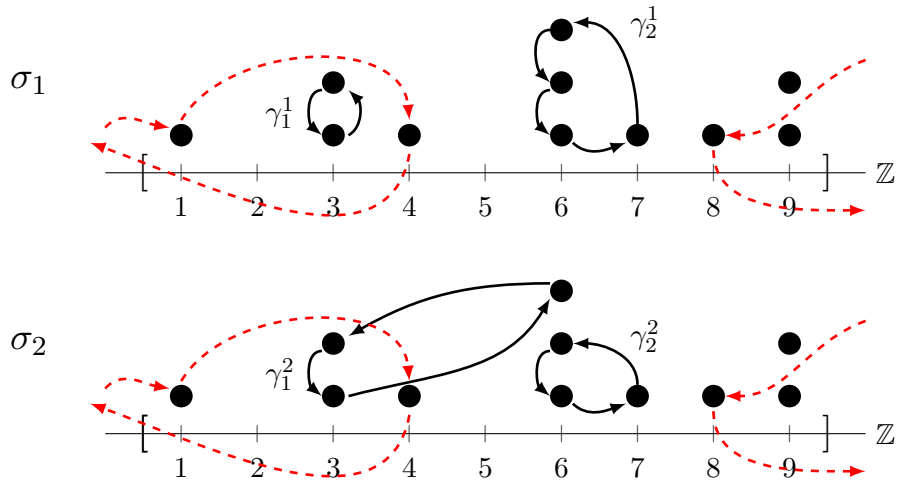
\begin{figure}[h]
\begin{center}
\begin{tikzpicture}
\draw (-1,1.2) node[scale=1.3]{$\sigma_1$};
\draw[-] (0,0)--(10,0) node[right]{$\Z$};
\node at (2.3,0.8) {$\gamma^1_1$};
\node at (7.1,2) {$\gamma^1_2$};
\draw (0.5,0) node[scale=1.3]{$[$};
\draw (9.5,0) node[scale=1.3]{$]$};
\foreach \x in {1,2,...,9}
{\draw (\x,0) node[scale=0.6]{$|$};
\draw (\x,-0.2) node[below]{$\x$};}
\fill[black] (1,0.5) circle(0.15);
\fill[black] (3,0.5) circle(0.15);
\fill[black] (3,1.2) circle(0.15);
\fill[black] (4,0.5) circle(0.15);
\fill[black] (6,0.5) circle(0.15);
\fill[black] (6,1.2) circle(0.15);
\fill[black] (6,1.9) circle(0.15);
\fill[black] (7,0.5) circle(0.15);
\fill[black] (8,0.5) circle(0.15);
\fill[black] (9,0.5) circle(0.15);
\fill[black] (9,1.2) circle(0.15);
\draw[->,>=latex, color=black, line width=1pt] (3.2,0.5) to[out=30,in=-45] (3.2,1.1);
\draw[->,>=latex, color=black, line width=1pt] (2.85,1.1) to[out=180,in=140] (2.85,0.5);
\draw[->,>=latex, color=black, line width=1pt] (6.15,0.4) to[out=-45,in=200] (6.85,0.4);
\draw[->,>=latex, color=black, line width=1pt] (7,0.65) to[out=90,in=0] (6.15,2);
\draw[->,>=latex, color=black, line width=1pt] (5.85,1.9) to[out=180,in=135] (5.85,1.2);
\draw[->,>=latex, color=black, line width=1pt] (5.85,1.1) to[out=180,in=140] (5.85,0.5);
\draw[->,>=latex, color=red, line width=1pt,dashed] (0,0.6) to[out=50,in=170] (0.85,0.6);
\draw[->,>=latex, color=red, line width=1pt,dashed] (1.0,0.7) to[out=60,in=95] (4.0,0.7);
\draw[->,>=latex, color=red, line width=1pt,dashed] (4.0,0.25) to[out=-100,in=-20] (-0.2,0.4);
\draw[->,>=latex, color=red, line width=1pt,dashed] (10,1.5) to[out=200,in=0] (8.15,0.5);
\draw[->,>=latex, color=red, line width=1pt,dashed] (8,0.3) to[out=270,in=180] (10,-0.5);
\end{tikzpicture}
\begin{tikzpicture}
\draw (-1,1.2) node[scale=1.3]{$\sigma_2$};
\node at (2.3,0.8) {$\gamma^2_1$};
\node at (7.1,1.3) {$\gamma^2_2$};
\draw (0.5,0) node[scale=1.3]{$[$};
\draw[-] (0,0)--(10,0) node[right]{$\Z$};
\draw (0.5,0) node[scale=1.3]{$[$};
\draw (9.5,0) node[scale=1.3]{$]$};
\foreach \x in {1,2,...,9}
{\draw (\x,0) node[scale=0.6]{$|$};
\draw (\x,-0.2) node[below]{$\x$};}
\fill[black] (1,0.5) circle(0.15);
\fill[black] (3,0.5) circle(0.15);
\fill[black] (3,1.2) circle(0.15);
\fill[black] (4,0.5) circle(0.15);
\fill[black] (6,0.5) circle(0.15);
\fill[black] (6,1.2) circle(0.15);
\fill[black] (6,1.9) circle(0.15);
\fill[black] (7,0.5) circle(0.15);
\fill[black] (8,0.5) circle(0.15);
\fill[black] (9,0.5) circle(0.15);
\fill[black] (9,1.2) circle(0.15);
\draw[->,>=latex, color=black, line width=1pt] (2.85,1.1) to[out=180,in=140] (2.85,0.5);
\draw[->,>=latex, color=black, line width=1pt] (3.2,0.5) to[out=15,in=-120] (5.85,1.8);
\draw[->,>=latex, color=black, line width=1pt] (5.85,2) to[out=180,in=30] (3.2,1.3);
\draw[->,>=latex, color=black, line width=1pt] (6.15,0.4) to[out=-45,in=200] (6.85,0.4);
\draw[->,>=latex, color=black, line width=1pt] (7,0.65) to[out=90,in=0] (6.15,1.2);
\draw[->,>=latex, color=black, line width=1pt] (5.85,1.1) to[out=180,in=140] (5.85,0.5);
\draw[->,>=latex, color=red, line width=1pt,dashed] (0,0.6) to[out=50,in=170] (0.85,0.6);
\draw[->,>=latex, color=red, line width=1pt,dashed] (1.0,0.7) to[out=60,in=95] (4.0,0.7);
\draw[->,>=latex, color=red, line width=1pt,dashed] (4.0,0.25) to[out=-100,in=-20] (-0.2,0.4);
\draw[->,>=latex, color=red, line width=1pt,dashed] (10,1.5) to[out=200,in=0] (8.15,0.5);
\draw[->,>=latex, color=red, line width=1pt,dashed] (8,0.3) to[out=270,in=180] (10,-0.5);
\end{tikzpicture}
\end{center}
\caption{Each figure above depicts a permutation in $S_{\theta,\La}^{\xi}$, $\Lambda = \{1,\dots,9\} \Subset \Z$. Cycles in the decomposition of the boundary condition $\xi$ are drawn using dashed lines, and cycles contained in $\Lambda$ that are compatible with $\xi$ are drawn in solid lines. 
}\label{Figura}
\end{figure} 
\end{center}
The specification at volume $\La$ associated to temperature $\alpha>0$ and boundary condition $\xi$ is given by 
\begin{equation} \label{especificaciones}
G_{\theta, \La}^{\xi}(\s)= \frac{e^{-\alpha H_{\theta,\La}(\s)}}{Z_{\theta, \La}^{\xi}} \one \{\s\in S_{\theta, \La}^{\xi}\}\,,
\end{equation} 
where $Z_{\theta,\La}^{\xi}$ is a normalizing constant that depends on $\alpha$, $\xi$, $\theta$ and $\La$. A distribution $\mu$ on $(S_{\theta}, \mathcal{F}_{\theta})$ is a Gibbs measure with respect to the Hamiltonian~\eqref{hamiltoniano} and compatible with specifications~\eqref{especificaciones} if for any $\La \Subset \Z^d$ and $A\in \mathcal{F}_{\theta}$ we have 
\begin{equation*} 
\mu(A) = \int G_{\theta, \La}^{\xi}(A) \,\dd\mu(\xi)\,.
\end{equation*}
A cycle $\g$ associated to $(s_1, \dots, s_{n}) \in {\Omega^n_\theta}$, $n\in \NN\cup \{\infty\}$, is a permutation $\g \in S_\theta$ such that $\g(s_i) = s_{i+1}$, 
$i=1,\dots,n-1$, $\g(s_n)=s_1$ if $n<\infty$, and $\g(s)=s$ otherwise. We write $\gamma=(s_1, \dots, s_{n})$ for short. Due to the cyclic structure of the cycle, the choice of starting point in this representation is arbitrary. A permutation $\s$ is a {\it finite cycle permutation} when its decomposition consists of finite cycles. Let 
\begin{equation*}
S_{\theta}^F:=\text{set of finite cycle permutations of points in }S_{\theta}. 
\end{equation*}
Given a cycle $\g$, we define its {\it support} as
\begin{equation*}
\{\g\}:=\{s\in \Omega_\theta,\, \g(s)\neq s\}.
\end{equation*} Clearly $\{\g\}=\{s_1,\dots,s_n\}$ when  $\g$ is associated to $(s_1,\dots,s_n)$. We will abuse notation and write $s\in \g$ instead of $s\in \{\g\}$.
We denote by
\begin{align}
&\Gamma_{\theta}=\big\{ \gamma \text{ cycle in }S_\theta, \#\{\g\}<\infty \big\} \label{Gamma}, \\
&\Gamma_{\theta,\La}=\big\{ \gamma \in \Gamma_\theta,\,\{\gamma\}\subseteq \Lambda \big\}. \nonumber
\end{align}  
Given a permutation $\s$ we say that $\g\in\s$ if $\g$ is one of the cycles in its decomposition. We define the {\it weight} of a finite cycle $\gamma$ as
\begin{equation} \label{weight}
w(\gamma):=e^{-\alpha \sum_{x\in \gamma} V(s-\gamma(s))}.
\end{equation}

\paragraph{The quadratic potential case in the discrete setting.} We first consider the case $V(x)=\|x\|^2$.

\begin{teo}\label{Teo-cuadratico}
Let $\rho\in(0,1/2)$ be the density of the point process in $\Z^d$. If $\alpha>\alpha_*$, where $\alpha_*$ is defined below in~\eqref{alfa_*}, then for almost every realization of $\{\theta(x)\}_{x\in\Z^d}$ there exists a unique Gibbs measure $\mu_{\theta}$ that concentrates on finite cycle permutations. 

Furthermore, $\mu_{\theta}$ can be obtained as a subsequential weak limit of specifications with identity boundary conditions.

\end{teo}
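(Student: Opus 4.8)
The plan is to identify each finite cycle permutation with the point configuration formed by its nontrivial cycles, so that the specification $G_{\theta,\La}^{\mathrm{id}}$ with identity boundary condition becomes a hardcore gas of cycles on $\Gamma_{\theta,\La}$: a cycle $\gamma$ carries activity $w(\gamma)$ as in~\eqref{weight}, two cycles are incompatible exactly when their supports meet (each point lies in a unique cycle), and $e^{-\alpha H_{\theta,\La}}$ factorizes as $\prod_{\gamma} w(\gamma)$ over the cycles of $\sigma$. Because the Hamiltonian only charges inter-site displacements, permutations internal to a single site are free but automatically finite since $\theta(x)<\infty$, so the only mechanism that could produce an infinite cycle is a chain of penalized jumps, each contributing a factor decaying like $e^{-\alpha}$.

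First I would run, for each $\La \Subset \Z^d$, the FFG loss network attached to~\eqref{especificaciones}: a birth-and-death process on configurations of pairwise compatible cycles in $\Gamma_{\theta,\La}$ where $\gamma$ is proposed at rate $w(\gamma)$, accepted only if its support avoids the current configuration, and deleted at rate one. As $\Omega_\theta \cap \La$ is finite this is an irreducible finite-state chain whose reversible law is the cycle-gas form of $G_{\theta,\La}^{\mathrm{id}}$. The accepted process is a thinning of the free (unconstrained) process, whose stationary law is the Poisson point process on $\Gamma_\theta$ with intensity $w$; this yields the domination
\begin{equation*}
G_{\theta,\La}^{\mathrm{id}} \preceq \mathrm{Poisson}(w) \quad\text{on } \Gamma_\theta,
\end{equation*}
uniformly in $\La$ (equivalently, the Papangelou intensity of the cycle gas is $\le w(\gamma)$). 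The hypotheses $\alpha>\alpha_*$ and $\rho\in(0,1/2)$ serve to guarantee that for $\P$-a.e.\ environment $w$ is locally summable, with $\sum_{\gamma\ni s} w(\gamma)<\infty$ and tail $\sum_{\gamma\ni s,\,\#\{\gamma\}\ge n} w(\gamma)\to 0$ as $n\to\infty$, so $\mathrm{Poisson}(w)$ is a locally finite process supported on finite cycles.

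From this uniform domination existence follows. The dominating $\mathrm{Poisson}(w)$ bounds the probability that any fixed point makes a long jump, so $\{G_{\theta,\La}^{\mathrm{id}}\}_{\La}$ is tight in $(S_\theta,d)$ and Prokhorov furnishes a subsequential weak limit $\mu_\theta$, giving the ``furthermore'' clause. That $\mu_\theta$ is Gibbs compatible with~\eqref{especificaciones} is the usual consequence of the consistency and Feller continuity of the kernels. Passing $G_{\theta,\La}^{\mathrm{id}}(s\ \text{lies in a cycle of length}\ \ge n)\le \sum_{\gamma\ni s,\,\#\{\gamma\}\ge n} w(\gamma)$ to the limit and invoking Borel--Cantelli with the summable tail shows $\mu_\theta(S_\theta^F)=1$. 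For uniqueness inside the finite-cycle class I would compare finite-volume loss networks under different boundary conditions and show that the clan of ancestors determining the cycle through a fixed point is $\P$-a.s.\ finite: this is the subcritical regime of the associated percolation of overlapping cycles, which is what $\alpha>\alpha_*$ encodes, and finiteness makes the local law of $\mu_\theta$ independent of the boundary condition, forcing any two Gibbs measures on $S_\theta^F$ to agree.

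I expect the main obstacle to be exactly the feature that the finite-volume detour circumvents. With $\theta(x)$ unbounded there are $\P$-a.s.\ sites of arbitrarily large multiplicity, near which the number of candidate cycles --- hence the offspring mean of the branching process that dominates the clan in~\cite{FFG,AFGL} --- is not uniformly bounded; the infinite-volume loss network therefore cannot be built directly as in the lattice case. The real work is thus to establish the domination $G_{\theta,\La}^{\mathrm{id}}\preceq\mathrm{Poisson}(w)$ purely at finite volume, and to verify, using $\rho<1/2$ and $\alpha>\alpha_*$, that the environment-dependent activities are $\P$-a.s.\ summable with the decaying tail above; granting these, tightness, the DLR limit and the finite-cycle property are routine.
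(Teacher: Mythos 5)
Your existence argument is, in substance, the paper's own: finite-volume FFG loss network, stochastic domination $G_{\theta,\La}^{\text{id}}\preceq \nu_{\theta}$ (the paper's Lemma~\ref{dominacion-de-especificaciones}), annealed-then-quenched summability of the environment-dependent activities (Lemmas~\ref{lema-cota-annealed} and~\ref{lema-cota-quenched}, where the work you call ``routine'' is the combinatorial bound $M_{\theta}(\bar y)$ over ordered supports, whose expectation is finite precisely because $\E[\theta(x)!\,2^{\theta(x)}]<\infty$ when $\rho<1/2$), then tightness, a subsequential weak limit which is Gibbs, and concentration on $S_{\theta}^F$. Your diagnosis of the main obstacle --- unbounded multiplicities destroy any uniform bound on the offspring mean of the dominating branching process --- is exactly the paper's opening remark in \S\ref{section-domination}.

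The gap is in uniqueness, and it is a real one: your plan rests on showing that ``the clan of ancestors determining the cycle through a fixed point is $\P$-a.s.\ finite,'' but this is precisely the object that the obstacle you identified makes intractable, so your proposal is internally inconsistent. Any cycle touching a site $z$ is incompatible with roughly $(\theta(z)-1)!$ trivial cycles at $z$, each of weight $1$; hence the expected number of offspring in the clan is a.s.\ unbounded over cycles, no choice of $\alpha$ restores subcriticality of the dominating branching process, and you offer no substitute argument for clan finiteness. The paper never proves it; instead it replaces the dynamic clan by a \emph{static} percolation argument: under the product of two free measures $\nu_{\theta,\La}^{\xi}\otimes\nu_{\theta,\La}^{\xi'}$ there is a.s.\ no infinite path of pairwise-overlapping \emph{non-trivial} open cycles (Lemma~\ref{no-percolacion-nu-times-nu}) --- the restriction to non-trivial cycles is what neutralizes the weight-$1$ trivial cycles, since an infinite path exists iff one exists using non-trivial cycles only, and the resulting sums are again controlled via ordered supports under $C_{\rho}\varphi(\alpha)<r_0$, which is what $\alpha_*$ in~\eqref{alfa_*} actually encodes (via~\eqref{r_0}), not subcriticality of a clan. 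Absence of percolation yields arbitrarily large finite separating sets a.s.\ under $\mu\otimes\mu'$ (Lemmas~\ref{nu-times-nu(A)=1} and~\ref{mu-times-mu(A)=1}), and conditioning on the first separating set containing $\La$ plus a symmetry argument (Lemma~\ref{unicidad-discreto}) forces $\mu=\mu'$. To complete your proof you would either have to establish clan finiteness despite unbounded offspring means --- something the paper explicitly does not know how to do --- or adopt a separating-set argument of this kind.
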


Let $r_0\in [0,1]$ be the unique solution of the equation \begin{equation}\label{r_0}
\frac{r}{(1-r)^2}-r=\frac{1}{2},\quad r_0\approx 0.35542.
\end{equation} 
Given $\rho \in(0,1/2),$ let 
\begin{equation} \label{C_rho} 
C_{\rho}:= \frac{\rho e^{-\rho+\frac{1}{2}}}{1-2\rho},
\end{equation}
and define $\alpha_*$ as
\begin{equation}
\label{alfa_*}
\alpha_*:=\frac{\pi}{\left[\big(\frac{r_0}{C_\rho}+1\big)^{\frac{1}{d}}-1\right]^2}\,.
\end{equation} 

The proof of Theorem~\ref{Teo-cuadratico} holds for any environment given by an i.i.d. family $\{\theta(x)\}_{x\in\Z^d}$ such that $\E(\theta(x)!\,2^{\theta(x)}) < \infty$, including the case discussed here where $\theta(x)\sim \cal{P}(\rho)$.

\paragraph{The general potential case in the discrete setting.}

A result analogous to theorem~\ref{Teo-cuadratico} holds for general potentials $V$ under appropriate growth conditions. In particular, it is enough that there exists $\alpha_0\geq 0$ such that for $\alpha > \alpha_0$  
\begin{equation} \label{funcion-varphi-general} 
\varphi_V(\alpha) = \sum\limits_{x\in \Z^d} e^{-\alpha V(x)} < \infty\,.
\end{equation} 
Betz~\cite{Betz-criterio-existencia} proves that this condition implies tightness of the specifications when the set of points is the integer lattice $\Z^d$.

Note that $V$ need not be strictly positive, and then there are jumps of positive length that do not contribute to the Hamiltonian. In this case an infinite cycle could have finite energy. To avoid this problem we need to restrict the density. Define 
\begin{equation} \label{L_V}
L_V := \sup\{\|x\| \colon V(x)=0,\, x\in\Z^d\},
\end{equation} 
and note that under \eqref{funcion-varphi-general}, $L_V<\infty $. Given $R>0$, let $E(R)$ be the event that there exists an infinite sequence of points $(s_i)_{i\in \mathbb{N}} \subset \Omega_{\theta}$ such that $\|X(s_i) - X(s_{i+1})\|\leq R$. Meester and Roy~\cite[Chapter~3]{Meester-Roy} prove that for low enough density there is no Boolean percolation in the continuous Poisson model. A similar argument proves that there exists $\rho_c(R)$ with $\P(E(R))=0$ if $\rho<\rho_c(R)$ and $\P(E(R))=1$ when $\rho>\rho_c(R)$.

A density $\rho$ is called good for the potential $V$ when $L_V<1$ or $\P(E(L_V))=0$. 

\begin{teo}\label{Teo-V-general}
Let $V$ a non-negative potential satisfying~\eqref{funcion-varphi-general} and $\rho\in(0,1/2)$ a good density for $V$. Consider $\alpha$ and $\rho$ such that  
\begin{equation*} 
C_{\rho} \,\varphi_V(\alpha)< r_0\,,
\end{equation*}  where $r_0$ and $C_{\rho}$ are defined in~\eqref{r_0} and~\eqref{C_rho} respectively.

Then for almost every realization of $\{\theta(x)\}_{x\in\Z^d}$ there exists a unique Gibbs measure $\mu_{\theta}$ supported on finite cycle permutations. 
\end{teo}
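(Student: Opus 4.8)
The plan is to run the loss-network construction of \S\ref{section-existence}--\S\ref{section-uniqueness} used for Theorem~\ref{Teo-cuadratico}, isolating the single place where the quadratic form of the potential is used and replacing it by the hypothesis on $\varphi_V$. The potential enters only through the cycle weights $w(\gamma)$ of~\eqref{weight}, and in every domination estimate these weights are summed over the possible destinations of a jump, producing factors $\sum_{x\in\Z^d}e^{-\alpha V(x)}=\varphi_V(\alpha)$ exactly where the Gaussian sum $\sum_x e^{-\alpha\|x\|^2}$ appeared before. Two finiteness facts make the random environment tractable: $\varphi_V(\alpha)<\infty$ by~\eqref{funcion-varphi-general}, and $\E[\theta(x)!\,2^{\theta(x)}]=e^{-\rho}/(1-2\rho)<\infty$, which holds precisely because $\rho<1/2$. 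For $V(x)=\|x\|^2$ every nonzero jump has strictly positive energy, so $L_V=0$ from~\eqref{L_V}, every $\rho<1/2$ is good, and the Gaussian tail estimate $\sum_{n\in\Z}e^{-\alpha n^2}\le 1+\sqrt{\pi/\alpha}$ turns $\alpha>\alpha_*$ into a subcriticality bound of the form $C_\rho\varphi_V(\alpha)<r_0$; Theorem~\ref{Teo-cuadratico} is thus the instance of the present argument in which the support statement comes for free.

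For existence I would repeat the quadratic argument. The key estimate is that, in expectation over $\theta$, the total weight of finite cycles of $\Gamma_\theta$ through a fixed point is bounded by $C_\rho\varphi_V(\alpha)$, with $C_\rho$ of~\eqref{C_rho} absorbing the density together with the combinatorial factor $\E[\theta(x)!\,2^{\theta(x)}]$ coming from the multiplicities and the self-avoidance of cycles. The hypothesis $C_\rho\varphi_V(\alpha)<r_0$ is exactly the subcriticality threshold of the branching process of~\cite{FFG,AFGL} that dominates the clan of ancestors, $r_0$ being the root of~\eqref{r_0}; subcriticality makes the clan almost surely finite and produces a Poisson domination of the specifications $G_{\theta,\La}^{\xi}$ uniform in $\La\Subset\Z^d$ and in the boundary condition. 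Tightness follows, and any subsequential weak limit is a Gibbs measure compatible with~\eqref{especificaciones}.

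The step I expect to be the main obstacle is showing that this limit is supported on $S_\theta^F$. Since $V$ may vanish off the origin, a cycle whose jumps all lie in $\{x:V(x)=0\}$ has weight $1$, so an infinite cycle can carry finite energy and is invisible to the energy-based domination above; the domination controls only positive-energy excursions. This is exactly the gap the good-density hypothesis closes. Zero-energy jumps have length at most $L_V$: when $L_V<1$ they can only join points sharing a site, and since each site has finite multiplicity no infinite zero-energy cycle can form (this is what happens in the quadratic case); when $L_V\ge 1$ one invokes $\P(E(L_V))=0$, so that the points joined by jumps of range $\le L_V$ do not percolate, by the absence of Boolean percolation at low intensity~\cite{Meester-Roy}. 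In either case there is almost surely no infinite chain of zero-energy jumps, and combining this with the domination of positive-energy cycles forces every cycle in the limit to be finite.

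Uniqueness among Gibbs measures supported on $S_\theta^F$ then follows as in \S\ref{section-uniqueness}: almost sure finiteness of the clan of ancestors makes the stationary loss network unique and reconstructible by the backward construction of~\cite{FFG}, and any finite-cycle Gibbs measure must coincide with its law. The genuinely new work relative to Theorem~\ref{Teo-cuadratico} is therefore concentrated in the support step --- reconciling the energy-blind domination with the geometric exclusion of infinite zero-energy cycles, and checking that ``good for $V$'' is precisely the condition that makes this reconciliation possible.
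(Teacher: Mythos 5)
There is a genuine gap, and it sits at the heart of your argument. Both your existence step and your uniqueness step invoke the infinite-volume clan-of-ancestors machinery of~\cite{FFG,AFGL}: you claim that $C_{\rho}\varphi_V(\alpha)<r_0$ is ``exactly the subcriticality threshold of the branching process that dominates the clan of ancestors'' and that this makes the clan almost surely finite, and later that uniqueness follows because the stationary loss network is ``reconstructible by the backward construction.'' This is precisely the step that the paper shows \emph{cannot} work in this model. The branching domination requires the quenched bound $\sup_{s\in\Omega_\theta}\sum_{\g\ni s}|\g|w(\g)<1$, but for any point $s$ at a site $x$ there are $(\theta(x)-1)!$ cycles through $s$ supported entirely on the points at $x$; these have zero energy, hence weight $1$, so the sum is at least $(\theta(x)-1)!$, and since $\sup_x\theta(x)=\infty$ almost surely the quenched supremum is infinite for every $\alpha$. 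Your annealed estimate (expected total weight through a point bounded via $C_\rho\varphi_V(\alpha)$) cannot be upgraded to this quenched uniform bound — and note that the trivial within-site cycles are not even counted in the ordered-support sums that produce $C_\rho\varphi_V(\alpha)$, since those sums run over supports of length $m\ge 2$. So the clan-finiteness premise on which both of your main steps rest fails, and this is exactly why the paper takes a different route.

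What the paper does instead: in \S\ref{section-domination} the loss network is run only in \emph{finite} volumes $\La\Subset\Z^d$, where clan finiteness is trivial (the free process restricted to the finitely many cycle types in $\Gamma_{\theta,\La}$ empties out infinitely often), yielding the stochastic domination $G_{\theta,\La}^{\xi}\preceq\nu_{\theta,\La}^{\xi}$ of Lemma~\ref{dominacion-de-especificaciones}. Existence then comes from tightness: annealed bounds on $\nu_\theta(\widehat K_f^c)$ under the condition $C_\rho\varphi_V(\alpha/2)<1$ (Lemma~\ref{lema-cota-annealed}), a Borel--Cantelli argument to make them quenched, and compactness of $K_f$ — which is where your support discussion (strict positivity vs.\ the good-density hypothesis when $L_V\ge1$) is indeed correct and matches the paper. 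Uniqueness is \emph{not} obtained from any backward construction: the paper takes two finite-cycle Gibbs measures $\mu,\mu'$, shows via a non-percolation estimate for open cycles under $\nu_{\theta,\La}^{\xi}\otimes\nu_{\theta,\La}^{\xi'}$ (Lemma~\ref{no-percolacion-nu-times-nu}) that $\mu\otimes\mu'$ admits arbitrarily large finite separating sets, and then replaces boundary conditions by the identity inside a separating set and uses the symmetry $(\s,\s')\mapsto(\s',\s)$. This is also where $r_0$ actually enters: $r<r_0$ guarantees $\sum_{m\ge2}m r^m<1/2$, which makes the expected number of open cycle paths of length $n$ summable; $r_0$ is a percolation threshold for the uniqueness argument, not a branching subcriticality constant for existence. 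You would need to replace both of your key steps by arguments of this finite-volume-domination and separating-set type for the proof to go through.
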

The family of Hamiltonians that satisfy the hypotheses of Theorem \ref{Teo-V-general} includes the one-body potentials discussed in~\cite{Betz-Ueltschi-PD-EJP}.

\paragraph{The quadratic potential case in the continuous setting.}

Let us now consider the case when the set of points $\Omega$ is a realization of a Poisson point process on $\R^d$ with density $\rho$. The definitions of specifications and Gibbs measures are analogous to those in discrete setting. 

\begin{teo}
Let $\rho$ and $\alpha$ be as in the statement of Theorem~\ref{Teo-V-general} for the potential 
\begin{equation}
\label{2ndpotential}
V(x):=\max\big\{\|x\|^2-2\sqrt{d}\,\|x\|,\, 0\big\}.
\end{equation}
Then for almost every realization $\Omega$ of the Poisson point process at density $\rho$, there exists a unique Gibbs measure at temperature $\alpha$ supported on finite cycle permutations associated to the quadratic Hamiltonian $H(\s)=\sum_{x\in \Omega} \|x-\s(x)\|^2.$
\end{teo}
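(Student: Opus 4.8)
The plan is to reduce the continuous model to the discrete model of Theorem~\ref{Teo-V-general} through a discretization that turns the continuous quadratic energy into a lower bound governed by the potential~\eqref{2ndpotential}. Partition $\R^d$ into the half-open unit cubes $z+[0,1)^d$, $z\in\Z^d$, and let $\pi\colon\R^d\to\Z^d$ be the projection $\pi(x)=(\lfloor x_1\rfloor,\dots,\lfloor x_d\rfloor)$. For a realization $\Omega$ of the Poisson point process on $\R^d$ with intensity $\rho$ I set $\theta(z):=\#\{x\in\Omega\colon \pi(x)=z\}$. Since disjoint unit cubes have unit Lebesgue measure and the Poisson process assigns independent counts to disjoint regions, $\{\theta(z)\}_{z\in\Z^d}$ is exactly an i.i.d.\ family of $\mathcal{P}(\rho)$ random variables, so the discretized environment coincides in law with the one in Theorem~\ref{Teo-V-general}. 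Moreover $\Omega$ is in bijection with $\Omega_\theta$ via a map $x\mapsto \bar x$, where $\bar x=(\pi(x),j)$ and $j$ enumerates the points of $\Omega$ inside the cube of $x$.

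The key observation is a pointwise energy comparison. Writing $x=z+a$ and $y=z'+b$ with $a,b\in[0,1)^d$, each coordinate of $a-b$ lies in $(-1,1)$, so $\|a-b\|\le\sqrt{d}$ and the triangle inequality gives
\[
\|x-y\|\ \ge\ \|z-z'\| - \sqrt{d}.
\]
If $\|z-z'\|\ge\sqrt{d}$ this yields $\|x-y\|^2\ge(\|z-z'\|-\sqrt{d})^2=\|z-z'\|^2-2\sqrt{d}\,\|z-z'\|+d$, while if $\|z-z'\|<\sqrt{d}$ the quantity $\|z-z'\|^2-2\sqrt{d}\,\|z-z'\|$ is negative; in either case
\[
\|x-y\|^2\ \ge\ \max\{\|z-z'\|^2-2\sqrt{d}\,\|z-z'\|,\,0\}\ =\ V(z-z').
\]
The subtracted term $2\sqrt{d}\,\|z\|$ in~\eqref{2ndpotential} is precisely the slack needed to absorb the within-cube displacements of the two endpoints. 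Since this bound is term by term, the continuous quadratic energy of any finite cycle $\gamma=(x_1,\dots,x_n)$ dominates the discrete energy of its image $\bar\gamma=(\bar x_1,\dots,\bar x_n)$ under $V$, and therefore the continuous weight satisfies $w_c(\gamma)\le w(\bar\gamma)$, with $w$ as in~\eqref{weight}.

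With this domination in hand I would re-run the loss-network construction of \S\ref{section-domination} for the continuous specifications. For a fixed $\Omega$ the map $x\mapsto\bar x$ is injective, hence injective on cycles through a fixed point, so for every $x\in\Omega$,
\[
\sum_{\gamma\ni x} w_c(\gamma)\ \le\ \sum_{\bar\gamma\ni \bar x} w(\bar\gamma),
\]
and the right-hand side is controlled by the same quantity $C_\rho\,\varphi_V(\alpha)<r_0$ that renders the discrete loss network subcritical. Consequently the continuous birth-and-death process on finite cycles is stochastically dominated by the discrete one constructed in the proof of Theorem~\ref{Teo-V-general}, its clan of ancestors is almost surely finite, and the arguments of \S\ref{section-existence} and \S\ref{section-uniqueness} then yield, for almost every $\Omega$, the existence of a Gibbs measure supported on finite cycle permutations together with uniqueness within that class. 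The goodness of $\rho$ for $V$, i.e.\ the absence of percolation of jumps of length $\le L_V$, rules out infinite cycles of finite energy exactly as in the discrete setting.

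The main obstacle will be making the loss-network domination fully rigorous when cycle supports carry continuous positions rather than the discrete labels $(z,j)$: one must check that the free (births-only) process of continuous cycles is a Poisson point process whose intensity is dominated, cube by cube, by that of the discrete free process, and that the space--time construction and the backward clan-of-ancestors analysis of \S\ref{section-domination} transfer verbatim. A secondary point is that the bijection $\Omega\leftrightarrow\Omega_\theta$ intertwines the continuous and discretized finite-volume specifications tightly enough that the subsequential weak limit inherits the finite-cycle support; this is immediate from the term-by-term comparison above, upgraded from single cycles to the full finite-volume Hamiltonians.
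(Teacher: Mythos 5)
Your proposal follows essentially the same route as the paper: discretize by unit cubes into an i.i.d.\ Poisson$(\rho)$ environment, bound the continuous quadratic energy below by the potential~\eqref{2ndpotential} (the paper gets this via Cauchy--Schwarz, you via the triangle inequality --- equivalent), deduce the weight domination $w(\g)\leq w_V(\Psi(\g))$, couple the continuous free process as a thinning of the discrete one, and transfer the tightness/existence and separating-set/uniqueness arguments of \S\ref{section-existence} and \S\ref{section-uniqueness}. The ``main obstacle'' you flag is exactly what the paper handles in \S\ref{section-continuum}, where the loss-network and domination machinery of \S\ref{section-domination} is observed to carry over verbatim since it never used the structure of the environment.
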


The proof follows as a corollary of Theorem~\ref{Teo-V-general} applied to $V$ in~\eqref{2ndpotential} and the discrete Poisson point process obtained by collecting all points in 
$x+[0,1)^d$ at $x$, $x\in \Z^d$.

\section{Domination by a Poisson process} \label{section-domination}

In this section we prove that the finite volume specifications \eqref{especificaciones} can be dominated by a Poisson measure on finite cycle permutations. The approach follows ideas from~\cite{FFG,FFG-spa,AFGL}. In these articles, the Gibbs measure in infinite volume is realized as the stationary distribution of a suitable Markov process on the space of gas of finite cycles. Extending this method to infinite volume to get a similar result in our case would require that 
$\beta(\alpha) = \sup_{s\in \Omega_{\theta}} \beta(\alpha, s)<1$, where 
\begin{equation*}
\beta(\alpha,s):= \sum_{\g \in \Gamma_{\theta}} |\g|w(\g)\, \one\{s\in \g\}, \hspace{6mm} s\in \Omega_{\theta},
\end{equation*} 
with $w(\g)$ the weight in~\eqref{weight} and $\Gamma_{\theta}$ the set in~\eqref{Gamma}. It is easy to see that this condition fails to hold almost surely.  Indeed, for any $s\in \Omega_{\theta}$ with $X(s)=x$, there exist $(\theta(x)-1)!$ cycles passing through $s$ that have zero weight, and hence $\beta(\alpha,s)\geq (\theta(X(s))-1)!$. Taking the supremum over $s\in \Omega_{\theta}$ we conclude that $\beta(\alpha)=\infty$ almost surely in $\{\theta(x)\}_{x\in \Z^d}$, for any $\alpha>0$.

Let $\Lambda \subset \Z^d$ be a finite set. A finite cycle permutation $\s \in S_{\theta}^F$ can be represented as a configuration $\eta\in \{0,1\}^{\Gamma_{\theta}}$ with $\eta(\g)= \one\{\g\in \s\}$. We say that $\eta$ is the {\it gas of cycles representation of} $\sigma$ and write $\g\in\eta$ iff $\eta(\g)=1$. Notice that  if $\eta(\gamma)=1$, then $\eta(\g')=0$ for any cycle $\g'$ with $\{\g\}\cap \{\g'\}\neq \emptyset$. 

Given $\xi \in S_{\theta}^{F}$ and $\La \Subset \Z^d$ let
\begin{equation*}
B(\xi, \La)= \{\g \in \xi \colon \{ \g\} \cap \La \neq \emptyset, \{\g\} \cap \La^c \neq \emptyset\},
\end{equation*} 
the set of cycles from $\xi$ that intersect $\La$ and $\La^c$. The set of permutations $S_{\theta,\La}^{\xi}$ that are compatible with $\xi$ at volume $\La$ introduced in 
\eqref{compatible} can now be described as
\begin{align*}
S_{\theta,\La}^{\xi} = \Big\{\eta \in \{0,1\}^{\Gamma_{\theta}} \colon & \eta(\g)=1 \text{ for all } \g\in B(\xi, \La), \nonumber \\ 
& \eta(\g)=0 \text{ if } \g\in \Gamma_{\theta,\La} \text{ and there exists } \g'\in B(\xi, \La) \text{ with } \{\g\}\cap \{\g'\} \neq \emptyset, \nonumber \\ 
& \eta(\g)\eta(\g')=0 \text{ if } \{\g\}\cap\{\g'\}\neq \emptyset \text{ for all } \g, \g'\in \Gamma_{\theta,\La} \notag \\
& \eta(\g)=\xi(\g) \text{ if }\{\g\}\subset \La^c \Big \}, 
\end{align*} 
and the specification at finite volume $\La$ with boundary condition $\xi$ \eqref{especificaciones} becomes
\begin{equation} \label{especificacion-gas-ciclos}
G_{\theta, \La}^{\xi}(\eta) = \frac{1}{Z_{\theta, \La}^{\xi}} \prod\limits_{\g \in \Gamma_{\theta,\La}} w(\g)^{\eta(\g)}\, \one \{ \eta\in S_{\theta,\La}^{\xi}\} \,.
\end{equation} 
From this viewpoint the specification $G_{\theta, \La}^{\xi}$ is a distribution over the space of gases of cycles, which interact by exclusion: if a cycle $\g$ is in the gas, then any other cycle that uses a point visited by $\g$ cannot be in the gas. For the rest of the article we indistinctly denote configurations by $\s$ or its associated gas of cycles $\eta$.

\paragraph{The free process.}  Let $\mathcal{N}$ be a Poisson process on $\Gamma_{\theta} \times \mathbb{R} \times \mathbb{R}_+$ with intensity measure $w(\g) \times dt \times e^{-r} dr$. Given $\xi$ and $\Lambda$, we define the {\it free process $(\eta_t^{o,\xi,\La} \colon t\in \R)$ on $\mathbb{N}_0^{\Gamma_{\theta}}$ associated to $\xi,\, \Lambda$} as 
\begin{equation} \label{xi-BD-process}
\eta_t^{o,\xi,\La}(\g) = \one\{\g \in B(\xi,\La)\} + \sum_{(\g,t',r')\in \mathcal{N}} \one\{t'\leq t < t'+r'\}\,.
\end{equation} 
each marginal process $(\eta_t^{o,\xi,\La}(\g) \colon t\in \R)$ is a birth and death process of cycles of type $\g$, shifted by 1 when $\g\in B(\xi, \La)$ to account for the boundary condition. A new copy of cycle $\g$ appears at rate $w(\g)$ and is removed at rate $1$, independently of other copies of the same cycle, and of other cycles. The generator of this process is given by
\begin{multline} \label{generador-free-xi}
\mathcal{L}^{o,\xi,\La} f(\eta) = \sum_{\g \in \Gamma_{\theta}} w(\g) \left[ f(\eta + \delta_{\g}) - f(\eta)\right] + \sum_{\g\in B(\xi, \La)} \eta(\g)\one\{\eta(\g)\geq 2\} \left[ f(\eta - \delta_{\g}) - f(\eta)\right] \\ + \sum_{\g\notin B(\xi, \La)} \eta(\g) \left[ f(\eta - \delta_{\g}) - f(\eta)\right] \,,
\end{multline}
$f:\mathbb{N}_0^{\Gamma_{\theta}}\to \R$ a test function.

Consider the product measure $\nu_{\theta,\La}^{\xi}$ on $\mathbb{N}_0^{\Gamma_{\theta}}$ such that, independently for $\g \in \Gamma_\theta,$ the marginal distribution 
$\nu_{\theta,\La}^{\xi}(\g)$ satisfies
\begin{align*}
\nu_{\theta,\La}^{\xi}(\g)&\sim \text{Poisson}\big(w(\g)\big)\quad \text{if } \g \not\in B(\xi, \La)\notag\\
\nu_{\theta,\La}^{\xi}(\g)&\sim 1+\text{Poisson}\big(w(\g)\big)\quad \text{if } \g \in B(\xi, \La).
\end{align*}
When $\xi=\text{id}$, the set $B(\text{id},\Lambda)=\emptyset$ for any $\Lambda\Subset \Z^d$, and $\nu_{\theta,\La}^{\text{id}}$ does not depend on $\Lambda$; in this case we  write $\nu_{\theta}$ instead of $\nu_{\theta,\La}^{\text{id}}$.
Each marginal $\nu_{\theta,\Lambda}^{\xi}(\g)$ is reversible for the birth and death dynamics of cycles of type $\g$, hence the product measure $\nu_{\theta,\La}^{\xi}$ is reversible for $\mathcal{L}^{o,\xi,\La}$.

The family of processes $\big\{(\eta_t^{o,\xi,\Lambda} \colon t\in \R),\, \xi\in S_\theta^F\big\}$ can be simultaneously built using 
the same driving Poisson process $\mathcal{N}$. The coupled construction yields $\eta_t^{o,\xi,\Lambda} \geq \eta_t^{o,\text{id},\Lambda}$, that is, 
\[
\eta_t^{o,\xi,\Lambda} (\g)\geq \eta_t^{o,\text{id},\Lambda}(\g)\quad \text{ for all }\g\in \Gamma_{\theta},\, t\in \R.
\]
In fact, these processes only differ if $\g \in B(\xi, \La)$, and then $\eta_t^{o,\xi,\Lambda} (\g)= \eta_t^{o,\text{id},\Lambda}(\g)+1$.

Note that $\nu_{\theta,\La}^{\xi}$ assigns positive probability to $S_{\theta,\La}^{\xi}$, hence the conditional measure $\nu_{\theta,\La}^{\xi}(\,\cdot \,|S_{\theta, \La}^{\xi})$ is well-defined, and it is a simple computation to verify that $\nu_{\theta,\La}^{\xi}(\,\cdot \,|S_{\theta, \La}^{\xi})= G_{\theta,\La}^{\xi}(\cdot)$.

\paragraph{The loss network.}
Our goal now is to define a process that can be easily compared to the free process, and which has $G_{\theta, \La}^{\xi}$ as invariant measure. We will realize it as a thinning of the free process. As before, let $\xi \in S_\theta^F$ and $\La \Subset \Z^d$ be fixed.

We say that two cycles $\g$ and $\g'$ are compatible if $\{\g\}\cap \{\g'\}= \emptyset$. Otherwise, they are incompatible. A cycle $\g$ is compatible with the gas of cycles $\eta \in \{0,1\}^{\Gamma_{\theta}}$, which we denote by $\g\sim\eta$, when $\g$ is compatible with all cycles $\g'$ such that $\eta(\g')=1$. 

The {\it loss network associated to $\xi,\, \Lambda$}  is the Markov process in $S^\xi_{\theta,\La}$ with generator
 \begin{equation} \label{xi-generador-loss}
\mathcal{L}^{\xi,\La} f(\eta) = \sum_{\g \in \Gamma_{\theta, \La}} w(\g) \one\{\g \sim \eta\}\left[ f(\eta + \delta_{\g}) - f(\eta)\right] + \sum_{\g\in \Gamma_{\theta,\La}} \eta(\g) \left[ f(\eta - \delta_{\g}) - f(\eta)\right]\,,
\end{equation} 
$f\colon S_{\theta, \La}^{\xi} \mapsto \R$ a test function.

Informally, the loss network follows the dynamics of the free process but it is subject to an exclusion rule: a cycle $\g\in\Gamma_{\theta,\La}$ tries to be added at rate $w(\g)$ but the attempt is effective only when $\g$ is compatible with the cycles already present in the configuration at the time; each cycle is removed at rate 1 independently of others; and a copy of each cycle in $B(\xi, \La)$ is present at all times. The loss network is an irreducible Markov process in a finite state space with a unique invariant measure.

\begin{lema} Let $\La\Subset \Z^d$ and $\xi$ a finite cycle permutation. The measure $G_{\theta, \La}^{\xi}$ defined in \eqref{especificacion-gas-ciclos} is the unique invariant distribution for the generator $\mathcal{L}^{\xi,\La}$. \end{lema}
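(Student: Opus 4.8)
The plan is to use the two facts already recorded just before the statement: the loss network is an irreducible Markov process on the \emph{finite} state space $S_{\theta,\La}^{\xi}$, so it admits a unique invariant distribution. Consequently the whole content of the lemma reduces to checking that $G_{\theta,\La}^{\xi}$ is invariant for $\mathcal{L}^{\xi,\La}$, after which uniqueness is automatic. Rather than verifying the global balance equations directly, I would establish the stronger property of reversibility, i.e. that $G_{\theta,\La}^{\xi}$ satisfies detailed balance with respect to $\mathcal{L}^{\xi,\La}$, from which invariance follows at once.

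First I would read off the elementary transitions from \eqref{xi-generador-loss}. The only moves are the birth of an interior cycle $\g\in\Gamma_{\theta,\La}$, which occurs at rate $w(\g)$ provided $\g\sim\eta$, and the death of a cycle $\g\in\Gamma_{\theta,\La}$ present in $\eta$, which occurs at rate $\eta(\g)=1$ (configurations in $S_{\theta,\La}^{\xi}$ are $\{0,1\}$-valued by the exclusion constraint). The boundary cycles of $B(\xi,\La)$ and the cycles supported in $\La^{c}$ are never modified, so every reachable configuration differs from $\eta$ by a single interior cycle. The detailed balance equation to be verified for each pair $(\eta,\eta+\delta_{\g})$ with $\g\in\Gamma_{\theta,\La}$ is therefore
\begin{equation*}
G_{\theta,\La}^{\xi}(\eta)\, w(\g)\, \one\{\g\sim\eta\} = G_{\theta,\La}^{\xi}(\eta+\delta_{\g})\,.
\end{equation*}

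The core step is the identity $G_{\theta,\La}^{\xi}(\eta+\delta_{\g})/G_{\theta,\La}^{\xi}(\eta)=w(\g)$ whenever $\g\sim\eta$ and $\eta\in S_{\theta,\La}^{\xi}$. Using the product form \eqref{especificacion-gas-ciclos}, the ratio of the two weights is exactly the factor $w(\g)$ contributed by the newly added cycle, the normalization $Z_{\theta,\La}^{\xi}$ cancelling. The crux is to show that compatibility $\g\sim\eta$ is precisely the condition guaranteeing that both $\eta$ and $\eta+\delta_{\g}$ lie in $S_{\theta,\La}^{\xi}$, so that the indicators in \eqref{especificacion-gas-ciclos} agree on the two configurations. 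Here I would run through the four defining constraints of $S_{\theta,\La}^{\xi}$: the constraints fixing $\eta$ on $B(\xi,\La)$ and on cycles supported in $\La^{c}$ are untouched by an interior move; the exclusion among interior cycles is preserved because $\g$ is compatible with every cycle currently present in $\eta$; and, crucially, since the cycles of $B(\xi,\La)$ are present in $\eta$ at all times, compatibility $\g\sim\eta$ forces $\g$ to be disjoint from them, which is exactly the requirement that $\eta+\delta_{\g}$ not violate the constraint excluding interior cycles meeting $B(\xi,\La)$. Conversely, if $\g\not\sim\eta$ then $\eta+\delta_{\g}\notin S_{\theta,\La}^{\xi}$, so $G_{\theta,\La}^{\xi}(\eta+\delta_{\g})=0$ and both sides of the displayed equation vanish.

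Once this identity is in hand, detailed balance holds for every transition, so $G_{\theta,\La}^{\xi}$ is reversible, hence invariant, for $\mathcal{L}^{\xi,\La}$, and uniqueness follows from irreducibility on the finite set $S_{\theta,\La}^{\xi}$. The only genuinely delicate point, and the one I would treat most carefully, is this matching of the compatibility relation $\sim$ with membership in $S_{\theta,\La}^{\xi}$, in particular its interaction with the always-present boundary cycles $B(\xi,\La)$; everything else is bookkeeping and the cancellation of $Z_{\theta,\La}^{\xi}$ in the product measure.
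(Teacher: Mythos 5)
Your proposal is correct and takes essentially the same approach as the paper: the paper omits the proof precisely because ``we only need to verify the detailed balance equations,'' and your verification of detailed balance for the pairs $(\eta,\eta+\delta_{\g})$, combined with irreducibility on the finite state space $S_{\theta,\La}^{\xi}$, is exactly that intended argument. Your careful check that compatibility $\g\sim\eta$ matches membership of $\eta+\delta_{\g}$ in $S_{\theta,\La}^{\xi}$ (in particular the interaction with the always-present cycles of $B(\xi,\La)$) correctly fills in the details the paper leaves to the reader.
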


Since we only need to verify the detailed balance equations, we omit the proof.

Denote by $\eta_t^{\xi,\La}$ the loss network process related to $\xi$ and $\La$ at time $t$. We want to construct $\eta_t^{\xi,\La}$ using a convenient thinning of the free process $(\eta_t^{o,\xi,\La})$ to obtain $\eta_t^{\xi,\La} \leq \eta_t^{o,\xi,\La}$ for all $t$. The algorithm to delete cycles needs to know if the birth attempt of a cycle is allowed or not. So, we consider the clan of ancestors of $\zeta=(\g,t,r) \in \Gamma_{\theta, \La}\times \R \times \R^+$ as follows. The first generation of ancestors supported on $\La$ is defined by: \begin{equation*}
A_1^{\zeta, \La} = \{(\g',t',r')\in \mathcal{N} \colon \g'\in \Gamma_{\theta, \La},\, \g' \nsim \g, \, t'<t<t'+r'\}\,. 
\end{equation*} 
Inductively, if $A_{n-1}^{\zeta, \La}$ is determined, for the $n$-th generation we set: \begin{equation*} 
A_{n}^{\zeta, \La} = \bigcup\limits_{\upsilon \in A_{n-1}^{\zeta, \La}} A_{1}^{\upsilon, \La}\,.
\end{equation*} The clan of ancestors of the mark $\zeta$ supported in $\La$ is defined by 
\begin{equation*} 
A^{\zeta,\La} = \bigcup\limits_{n \geq 1} A_n^{\zeta,\La}. 
\end{equation*}
Suppose that $A^{\zeta,\La}$ is finite for all $\zeta \in \Gamma_{\theta, \La}\times \R \times \R^+$ and for almost all realizations of $\mathcal{N}$. To describe the thinning of $\eta^{o,\xi,\Lambda}$ we define in each step if a cycle is kept or deleted using its clan of ancestors.

Let $\mathcal{D}_0^{\xi,\La}= \{(\g,t,r) \in \mathcal{N} \colon \g\nsim \g' \text{ for some } \g'\in B(\xi,\La)\}$ and for $n\geq 1$ set \begin{equation}\label{kept-deleted-paso-n}
\mathcal{K}_{n}^{\xi,\La} = \{\zeta \in \mathcal{N} \colon A_1^{\zeta, \La} \setminus \mathcal{D}_{n-1}^{\xi, \La} = \emptyset \}, \hspace{1cm} \mathcal{D}_{n}^{\xi,\La} = \{\zeta \in \mathcal{N} \colon A_1^{\zeta, \La} \cap \mathcal{K}_{n}^{\xi,\La} \neq \emptyset \} \,.
\end{equation} 

Let $\mathcal{K}^{\xi,\La}= \cup_{n\geq 1} \mathcal{K}_n^{\xi,\La}$ be the set of kept cycles and $\mathcal{D}^{\xi, \La}= \cup_{n \geq 1} \mathcal{D}_n^{\xi,\La}$ be the set of deleted cycles. Note that in the initial step any cycle that is incompatible with a cycle from $B(\xi,\Lambda)$ is deleted. Under the assumption that all the clans of ancestors supported in $\La$ are finite, every mark $\zeta \in \Gamma_{\theta, \La}\times \R \times \R^+$ is kept or deleted.

Now, using kept cycles we give a graphical construction for the loss network related to $\xi$ at volume $\La$ by the formula \begin{equation} \label{loss-network}
\eta_t^{\xi,\La}(\g) = \sum_{(\g,t',r')\in \mathcal{N}} \one\{t'\leq t < t'+r'\} \, \one\{(\g,t',r') \in \mathcal{K}^{\xi,\La}\} \, \one\{\g \in \Gamma_{\theta, \La}\}\,.
\end{equation} 
To show that~\eqref{loss-network} is well-defined we need to check that the clan of ancestors of any mark $(\g,t',r')$ is finite almost surely. The next Lemma proves it when $\La$ is finite but unfortunately the argument does not work when $\La$ is infinite as we seen at the beginning of this section.

\begin{lema} \label{xi-especificacion-invariante}
If $\La\Subset \Z^d$, the process $(\eta_t^{\xi,\La} \colon t\in \R)$ is well-defined. It is a Markov process with generator given by~\eqref{xi-generador-loss}. The construction~\eqref{loss-network} is stationary, so, $\eta_{t}^{\xi,\La}$ is distributed according to $G_{\theta, \La}^{\xi}$ for all $t$.
\end{lema}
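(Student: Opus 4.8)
The plan is to exploit the fact that, for a fixed realization of $\theta$ with all multiplicities finite and $\La \Subset \Z^d$, the point set $\Omega_\theta \cap \La = \{(x,i) : x\in\La,\, 1\le i\le \theta(x)\}$ is finite, and hence $\Gamma_{\theta,\La}$ is a \emph{finite} collection of cycles. In particular the total birth rate $W_\La := \sum_{\g\in\Gamma_{\theta,\La}} w(\g)$ is a finite sum of weights bounded by $1$, so $W_\La<\infty$. This is exactly what fails for infinite $\La$, where $\Gamma_{\theta,\La}$ is infinite and the total rate diverges, as recorded by $\beta(\alpha)=\infty$ at the beginning of the section. Every object in the construction --- the first generation $A_1^{\zeta,\La}$, the clan $A^{\zeta,\La}$, and the loss network \eqref{loss-network} --- refers only to cycles in $\Gamma_{\theta,\La}$, so it suffices to control the restriction of the free process to this finite type space.

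The heart of the argument is the a.s.\ finiteness of the clan $A^{\zeta,\La}$ for every mark $\zeta=(\g,t,r)$. Rather than dominating by a subcritical branching process as in \cite{FFG,AFGL} (which would require $\beta(\alpha)<1$ and is unavailable here), I would use the existence of \emph{empty times} for the free process restricted to $\Gamma_{\theta,\La}$. Restricted to this finite set, each marginal $(\eta^{o,\xi,\La}_t(\g):t\in\R)$, $\g\in\Gamma_{\theta,\La}$, is a stationary reversible M/M/$\infty$ birth-and-death process with marginal law $\mathrm{Poisson}(w(\g))$ (no boundary shift occurs, since cycles of $B(\xi,\La)$ meet $\La^c$ and so lie outside $\Gamma_{\theta,\La}$), and these marginals are independent. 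Hence the restricted configuration is empty with stationary probability $\prod_{\g\in\Gamma_{\theta,\La}} e^{-w(\g)}=e^{-W_\La}>0$; by stationarity and ergodicity of the time shift on $\mathcal{N}$ there are a.s.\ infinitely many times $s<t$ at which no cycle of $\Gamma_{\theta,\La}$ is alive. Fixing one such $s$, any cycle alive at an instant of $(s,t]$ must have been born in $(s,t]$ (otherwise it would straddle $s$, contradicting emptiness there), so the recursive definition forces the entire clan $A^{\zeta,\La}$ to consist of marks born during $(s,t]$. Since the number of such births is $\mathrm{Poisson}(W_\La(t-s))$ and thus finite a.s., the clan is finite. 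This gives the first assertion: the keep/delete recursion \eqref{kept-deleted-paso-n} terminates for every mark, so each $\zeta$ is unambiguously sorted into $\mathcal{K}^{\xi,\La}$ or $\mathcal{D}^{\xi,\La}$ and \eqref{loss-network} is well-defined.

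It then remains to identify the law of the constructed process. For the Markov property and the generator I would read off directly from the graphical rule that a copy of $\g\in\Gamma_{\theta,\La}$ is born in $\eta^{\xi,\La}$ at rate $w(\g)$ exactly when $\g$ is compatible with the current configuration: a birth mark is kept iff none of its incompatible ancestors alive at its birth time is itself kept, which by induction along the finite clan is equivalent to $\g\sim\eta^{\xi,\La}_{t}$, while each present copy is removed at rate $1$ through its exponential mark; this is precisely \eqref{xi-generador-loss}. For stationarity, the key point is that the entire construction --- clan, keep/delete classification, and \eqref{loss-network} --- is a measurable and time-translation-covariant functional of $\mathcal{N}$, whose law is invariant under time shifts by virtue of the $dt$ factor in its intensity. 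Therefore $(\eta^{\xi,\La}_t)_{t\in\R}$ is stationary, and being a stationary Markov process with generator $\mathcal{L}^{\xi,\La}$ its one-time marginal is invariant for $\mathcal{L}^{\xi,\La}$; by the preceding lemma this invariant measure is unique and equals $G^{\xi}_{\theta,\La}$, so $\eta^{\xi,\La}_t\sim G^{\xi}_{\theta,\La}$ for every $t$.

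I expect the main obstacle to be the finiteness of the clan of ancestors, and specifically making rigorous that the backward exploration cannot escape a window bounded below by an empty time; the empty-time argument, rather than the branching-process domination used at infinite volume, is exactly what makes the finite-$\La$ case tractable. The generator identification and the passage from stationarity to the invariant law are then essentially bookkeeping built on the uniqueness statement of the previous lemma.
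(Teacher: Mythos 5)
Your proof is correct and follows essentially the same route as the paper: almost-sure finiteness of the clan $A^{\zeta,\La}$ via the existence of doubly infinite empty times of the free process restricted to the finite set $\Gamma_{\theta,\La}$, then identification of the graphical construction as a stationary Markov process with generator \eqref{xi-generador-loss}, whose one-time marginal must be $G_{\theta,\La}^{\xi}$ by the uniqueness statement of the preceding lemma. Your write-up is in fact more detailed than the paper's (which only asserts the empty times and the stationarity), so no gaps to report.
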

\begin{proof} Since $\Gamma_{\theta, \La}$ is a finite set, for almost every realization of the process $\mathcal{N}$ there exists a sequence of times $\{t_j \colon j\in \Z\}$ with $t_{j} {\to} \pm\infty$ as $j \to \pm\infty$ such that $\eta_{t_j}^{o,\xi,\Lambda}(\g)=0$ for all $\g \in \Gamma_{\theta, \La}$. Therefore, $A^{\zeta, \La}$ must be finite almost surely for all $\zeta \in \Gamma_{\theta, \La}\times \R \times \R^+$.

If the process $(\eta_t^{o,\xi,\La} \colon t\in \R)$ is restricted to cycles in $\Gamma_{\theta, \La}$, marks can be sorted by their birth time (the second coordinate of the mark), and so, the algorithm described in~\eqref{kept-deleted-paso-n} works.

The construction~\eqref{loss-network} is stationary and Lemma~\ref{xi-especificacion-invariante} implies that $\eta_t^{\xi, \La}$ has distribution $G_{\theta,\Lambda}^{\xi}$ for all $t$.
\end{proof}

\begin{lema} \label{dominacion-de-especificaciones} Let $\xi$ be a finite cycle permutation. For almost every realization of the environment $\theta$, we have that $G_{\La, \theta}^{\xi}$ is stochastically dominated by $\nu_{\theta,\La}^{\xi}$ for all $\La\Subset \Z^d$.
\end{lema}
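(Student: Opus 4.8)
The plan is to deduce the domination from the coupled graphical construction of the free process $(\eta_t^{o,\xi,\La})$ and the loss network $(\eta_t^{\xi,\La})$, both driven by the \emph{same} Poisson process $\mathcal{N}$. Since the loss network is realised as a thinning of the free process, the domination will follow by comparing the two stationary configurations pointwise and invoking the elementary implication that a monotone coupling yields stochastic domination. The substantive content is therefore not an inequality between the densities of $G_{\theta,\La}^{\xi}$ and $\nu_{\theta,\La}^{\xi}$, but rather a sample-path comparison of the two processes that realise them.

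First I would fix $\La\Subset\Z^d$ and restrict attention to the full-measure event $\{\theta(x)<\ii \text{ for every } x\in\Z^d\}$, on which $\Omega_\theta\cap\La$ is finite and hence $\Gamma_{\theta,\La}$ is a finite set. On this event Lemma~\ref{xi-especificacion-invariante} applies: for almost every realisation of $\mathcal{N}$ all clans of ancestors supported in $\La$ are finite, the loss network~\eqref{loss-network} is well-defined and stationary with marginal $G_{\theta,\La}^{\xi}$, while the free process~\eqref{xi-BD-process} is stationary with marginal $\nu_{\theta,\La}^{\xi}$ by the reversibility noted after~\eqref{generador-free-xi}.

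The key step is the pointwise inequality $\eta_t^{\xi,\La}(\g)\le\eta_t^{o,\xi,\La}(\g)$ for every $\g$ and every $t$, which I would read directly off formulas~\eqref{xi-BD-process} and~\eqref{loss-network}. For a cycle $\g\in\Gamma_{\theta,\La}$, the loss network counts a mark $(\g,t',r')\in\mathcal{N}$ only when it additionally belongs to the kept set $\mathcal{K}^{\xi,\La}$, so the extra factor $\one\{(\g,t',r')\in\mathcal{K}^{\xi,\La}\}$ can only suppress births that are present in the free process; hence the thinned marginal process never exceeds the free one on these coordinates. The boundary cycles $\g\in B(\xi,\La)$ are forced present in both measures, with exactly one copy in $G_{\theta,\La}^{\xi}$ matched by the forced term $\one\{\g\in B(\xi,\La)\}$ carried by the free process in~\eqref{xi-BD-process} (plus possible extra births), so again the loss-network value is dominated; and cycles supported in $\La^c$ contribute the common frozen background dictated by $\xi$. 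Thus the comparison reduces to the cycles intersecting $\La$, where the thinning gives $\eta_t^{\xi,\La}\le\eta_t^{o,\xi,\La}$.

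Finally, fixing any time $t$, the pair $(\eta_t^{\xi,\La},\eta_t^{o,\xi,\La})$ is a coupling of $G_{\theta,\La}^{\xi}$ and $\nu_{\theta,\La}^{\xi}$ whose first coordinate is dominated by the second, whence $G_{\theta,\La}^{\xi}\preceq\nu_{\theta,\La}^{\xi}$. Intersecting over the countably many finite sets $\La$ and over the full-measure events above produces a single full-measure set of environments $\theta$ on which the domination holds simultaneously for all $\La\Subset\Z^d$, as required. I do not anticipate a genuine obstacle; the only delicate point is the bookkeeping that separates the cycles genuinely free inside $\La$ (those in $\Gamma_{\theta,\La}$, tracked by~\eqref{loss-network}) from the boundary cycles $B(\xi,\La)$ forced present in both measures, so that the monotone coupling is valid precisely on the index set of cycles that meet $\La$.
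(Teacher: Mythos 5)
Your proposal is correct and takes essentially the same route as the paper: both couple the loss network and the free process by driving the two graphical constructions with the same Poisson process $\mathcal{N}$, read off the pointwise bound $\eta_t^{\xi,\La}(\g)\le\eta_t^{o,\xi,\La}(\g)$ from the thinning, and conclude by stationarity of the two constructions. Your extra bookkeeping (finiteness of $\Gamma_{\theta,\La}$, the treatment of the boundary cycles in $B(\xi,\La)$, and intersecting over countably many $\La$ to get one full-measure set of environments) only makes explicit what the paper's short proof leaves implicit.
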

\begin{proof} Since $\nu_{\theta,\La}^{\xi}$ and $G_{\La, \theta}^{\xi}$ are invariant measures for the free process and the loss network process respectively, it is enough to give a coupling such that $\eta_t^{\xi,\La} \leq \eta_t^{o,\xi,\La}$ for all $t$. The coupling is to use the same $\mathcal{N}$ for both graphical representations defined in~\eqref{xi-BD-process} and~\eqref{loss-network}. By these constructions it follows that \[\eta_t^{\xi,\La}(\g) \leq \eta_t^{o,\xi,\La}(\g) \text{ for all } t,\] and as both process are stationary $\nu_{\theta,\La}^{\xi}$ dominates $G_{\La, \theta}^{\xi}$.
\end{proof}


\section{Existence of Gibbs measures}\label{section-existence}

In this section we prove that for $\rho\in(0,1/2)$ the family of specifications with identity boundary condition is tight in the large temperature regime for almost every realization of $\theta$. The tightness also holds considering specifications with a boundary condition given by a finite cycle permutation. In the next section we prove that for the large temperature regime there exists a unique Gibbs measure that concentrates over finite cycle permutations, so, weak limits of specifications with a general finite cycle boundary condition are the same as the identity case. For this reason, we focus on the identity boundary condition case.

All proofs are done for the quadratic potential case but work in the general case with slight modifications. In such cases we explain differences in the next subsection.

We have cycles that are different but they use the same sites and have an identical value for the Hamiltonian. See for instance $\g_1^2$ and $\g_2^2$ in Figure~\eqref{Figura}. We want to study the number of cycles that project to a fixed and ordered set.

The ordered support $[\gamma]$ of a cycle $\gamma=(s_1,\,s_2,\dots, s_n)$ is the vector in $(\Z^d)^m,\, m\le n$, given by
\begin{align} \label{ord-supp}
&[\gamma]=(x_1, x_2, \dots, x_m) \quad \text{with} \quad x_i=X(s_{\pi(i)})
\end{align} where $\pi(1)=1$, and inductively, $\pi(i)=\inf\{k>\pi(i-1),\, X(s_k)\neq X(s_{\pi(i-1)})\},\,\, i>1$. 
In other words, $[\g]$ is the projection of $\g$ to $\Z^d$ erasing consecutive repetitions of sites. Note that both in the representation of $\gamma$ as a vector and in the definition of its ordered support $[\gamma],$ due to the cyclic property of $\gamma$, the choice of initial point is arbitrary. Starting from any other point $s \in \{\gamma\}$ for the former, or of its spatial coordinate $X(s)$ for the latter, lead to alternative representations of the cycle and its ordered support. 

The following refers to Figure 1. The supports of the cycles in $\sigma_1$ are $\{\gamma_1^1\}=\{(3,1);(3,2)\}$ and 
$\{\gamma_2^1\}=\{(6,1);(6,2);(6,3);(7,1)\}$, and the ordered supports are $[\gamma_1^1]=(3)$ and $[\gamma_2^1]=(6,7)$. Also, $\{\gamma_2^2\}=\{(6,1);(6,2);(7,1)\}\neq \{\gamma_2^1\}$, but they share the ordered support, $[\gamma_2^1]=[\gamma_2^2]$.

Let $\bar{y} \in (\Z^d)^m$ be a vector such that $\bar{y}_i\neq \bar{y}_{i+1}$. It will represent an ordered support, so, it could have repetitions in different (but non-consecutive) coordinates. Write $N_{\theta}(\bar{y})$ for the number of cycles $\g$ such that $[\g]= \bar{y}$. In the appendix, using basic facts from combinatorics, we compute an upper bound for $N_{\theta}(\bar{y})$ and we call it $M_{\theta}(\bar{y})$. The explicit definition of $M_{\theta}(\bar{y})$ is in~\eqref{cotaM(y)}.

In the following we use that the expectation of $M_{\theta}(\bar{y})$ under $\P$ is bounded for $\rho\in(0,1/2)$ by
\begin{align}
\E[M_{\theta}(\bar{y})] \leq \left(\frac{\rho e^{-\rho+\frac{1}{2}}}{1-2\rho}\right)^{|\bar{y}|} \label{cota-expectation-M(y)}\,,
\end{align} where $|\bar{y}|$ is the number of coordinates of $\bar{y}$.

Note that the weight of a cycle is a function of its ordered support. So, for a cycle $\g$ with $[\g]=\bar{y}$ we have that $w(\g)=w(\bar{y}):= \exp\{-\alpha \sum_{i=1}^m \|y_{i+1} - y_i\|^2 \}$ assuming $y_{m+1}=y_1$. 

In certain situations it will be useful to sum the weights of all finite cycles that contain a site $x$. Instead, using the bound $M_{\theta}(\bar{y})$, we will sum over the ordered supports, whose sums are easier to calculate. In fact, the sum of the weights of ordered supports that contain site $x$ and have length $m$ is, 
\begin{equation} \label{suma-pesos-largo-m}
\sum\limits_{\substack{\bar{y} \colon x\in \bar{y}\\|\bar{y}|=m \\ \bar{y}_i\neq \bar{y}_{i+1}}} w(\bar{y}) = \sum\limits_{\substack{\bar{y} \colon x\in \bar{y}\\|\bar{y}|=m \\ \bar{y}_i\neq \bar{y}_{i+1}}} \prod\limits_{i=1}^{m} e^{-\alpha \|\bar{y}_i-\bar{y}_{i+1}\|^2} = \sum\limits_{\substack{t_1\dots t_m\\t_i \neq 0_d}} \prod\limits_{i=1}^{m} e^{-\alpha \|t_i\|^2} = \varphi(\alpha)^m,
\end{equation} 
where $\varphi(\alpha)= \sum_{t \in \Z^d,\, t \neq 0_d} e^{-\alpha \|t\|^2}$. Observe that $\varphi$ is the function defined in~\eqref{funcion-varphi-general} for the quadratic potential. It is easy to compute that $\varphi(\alpha) < (1+ \sqrt{\frac{\pi}{\alpha}})^d -1$. So, $\varphi$ is a decreasing function of $\alpha$ that tends to 0 when $\alpha \to +\infty$.

Now we start with a series of lemmas to prove the tightness of $\{G_{\theta, \La}^{\text{id}}\}_{\La \Subset \Z^d}$.

For $f: \Z^d \mapsto \mathbb{N}$ we define the set $\widehat{K}_f = \bigcap_{x\in \Z^d} \widehat{K}_f(x),$ where \begin{equation*} 
\widehat{K}_{f}(x)= \{\eta\in \mathbb{N}_0^{\Gamma_{\theta}} \colon \forall \,\g\in\eta \text{ such that } x\in \g \text{ we have } H(\g) \leq f(x)\} \,.
\end{equation*} Denote by $\widehat{K}_{f}^c(x)$ the complement of $\widehat{K}_{f}(x)$.

\begin{lema} \label{lema-cota-annealed} Let $\rho\in (0,1/2)$ and $\alpha>0$ such that 
\begin{equation} \label{condicion-existencia-rho-alpha} C_{\rho}\,\varphi(\alpha/2)<1\,, 
\end{equation} where $\varphi$ is the function defined in~\eqref{funcion-varphi-general} and $C_{\rho}=\frac{\rho e^{-\rho+\frac{1}{2}}}{1-2\rho}$. Then, \begin{equation} \label{cota-annealed}
\E \left[\nu_{\theta}(\widehat{K}_{f}^c(x)) \right] \leq C(\rho,\alpha) e^{-\frac{\alpha}{2} f(x)}\,.
\end{equation}
\end{lema}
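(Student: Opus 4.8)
The plan is to reduce the bound to the combinatorial estimate \eqref{cota-expectation-M(y)} on ordered supports. First I would unpack the event $\widehat{K}_f^c(x)$ under the product Poisson measure $\nu_{\theta}$. For a fixed environment $\theta$, a configuration $\eta$ lies in $\widehat{K}_f^c(x)$ exactly when at least one cycle $\g$ with $x\in\g$ and $H(\g)>f(x)$ is present, so $\widehat{K}_f^c(x)=\bigcup_{\g}\{\eta(\g)\ge 1\}$, the union running over such ``heavy'' cycles through $x$. Since under $\nu_{\theta}$ the coordinate $\eta(\g)$ is Poisson with mean $w(\g)$, a union bound together with $\nu_{\theta}(\eta(\g)\ge 1)=1-e^{-w(\g)}\le w(\g)$ (equivalently Markov's inequality, as $\E_{\nu_{\theta}}[\eta(\g)]=w(\g)$) gives
\[
\nu_{\theta}(\widehat{K}_f^c(x))\le \sum_{\substack{\g:\,x\in\g\\ H(\g)>f(x)}} w(\g).
\]

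Next I would take expectations over $\theta$ and pass from cycles to ordered supports. Grouping cycles by their ordered support $\bar{y}$ and using that both $w$ and $H$ depend on $\g$ only through $[\g]=\bar{y}$, the right-hand side equals $\sum_{\bar{y}:\,x\in\bar{y},\,H(\bar{y})>f(x)} N_{\theta}(\bar{y})\,w(\bar{y})$. By Tonelli (all terms nonnegative) I may interchange $\E$ with the sum, and then bound $N_{\theta}(\bar{y})\le M_{\theta}(\bar{y})$ together with $\E[M_{\theta}(\bar{y})]\le C_{\rho}^{|\bar{y}|}$ from \eqref{cota-expectation-M(y)}, obtaining
\[
\E[\nu_{\theta}(\widehat{K}_f^c(x))]\le \sum_{\substack{\bar{y}:\,x\in\bar{y}\\ H(\bar{y})>f(x)}} C_{\rho}^{|\bar{y}|}\,w(\bar{y}).
\]

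The decisive step is to extract the factor $e^{-\frac{\alpha}{2}f(x)}$. Writing $w(\bar{y})=e^{-\alpha H(\bar{y})}=e^{-\frac{\alpha}{2}H(\bar{y})}e^{-\frac{\alpha}{2}H(\bar{y})}$ and using $H(\bar{y})>f(x)$ in the first factor yields $w(\bar{y})\le e^{-\frac{\alpha}{2}f(x)}e^{-\frac{\alpha}{2}H(\bar{y})}$. Dropping the now-redundant constraint $H(\bar{y})>f(x)$ (which only enlarges the sum), decomposing by the length $m=|\bar{y}|$, and invoking the identity \eqref{suma-pesos-largo-m} at temperature $\alpha/2$, namely $\sum_{\bar{y}:\,x\in\bar{y},\,|\bar{y}|=m} e^{-\frac{\alpha}{2}H(\bar{y})}=\varphi(\alpha/2)^m$, I get
\[
\E[\nu_{\theta}(\widehat{K}_f^c(x))]\le e^{-\frac{\alpha}{2}f(x)}\sum_{m\ge 1}\big(C_{\rho}\,\varphi(\alpha/2)\big)^m.
\]
Under hypothesis \eqref{condicion-existencia-rho-alpha} we have $C_{\rho}\varphi(\alpha/2)<1$, so the geometric series converges, and setting $C(\rho,\alpha)=\frac{C_{\rho}\varphi(\alpha/2)}{1-C_{\rho}\varphi(\alpha/2)}$ yields \eqref{cota-annealed}.

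I expect the only genuinely delicate point to be conceptual: the quenched randomness enters solely through the cycle counts $N_{\theta}(\bar{y})$, whose expectations are controlled uniformly by $C_{\rho}^{|\bar{y}|}$, and it is precisely this uniform control that lets the environment average commute with the sum and produce a bound summable in $m$. Crucially, the dangerous zero-energy cycles supported on a single site (of which there are factorially many, the source of $\beta(\alpha)=\infty$ noted at the start of \S\ref{section-domination}) carry $H=0$ and are automatically excluded by the constraint $H(\bar{y})>f(x)\ge 0$, so they never enter $\widehat{K}_f^c(x)$; this is what rescues the argument. The quantitative heart is the appendix estimate \eqref{cota-expectation-M(y)}, which I take as given, so assembling it into \eqref{cota-annealed} is the routine computation above.
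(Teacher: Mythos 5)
Your proposal is correct and follows essentially the same route as the paper's proof: a union bound over heavy cycles through $x$ with $1-e^{-w(\g)}\le w(\g)$, passing to ordered supports via $N_{\theta}(\bar{y})\le M_{\theta}(\bar{y})$ and the annealed bound \eqref{cota-expectation-M(y)}, splitting $e^{-\alpha H(\bar{y})}$ into two halves to extract $e^{-\frac{\alpha}{2}f(x)}$, and summing the geometric series \eqref{suma-pesos-largo-m} at temperature $\alpha/2$. The only cosmetic difference is that you start the final sum at $m\ge 1$ rather than $m\ge 2$, which merely changes the value of the constant $C(\rho,\alpha)$.
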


In the quadratic potential case we know that $\varphi(\alpha/2) \to 0$ when $\alpha \to +\infty$, so, for any $\rho \in (0,1/2)$ we can choose $\alpha$ large enough such that $C_{\rho}\,\varphi(\alpha/2)<1$.

For a general potential $V$ the condition~\eqref{condicion-existencia-rho-alpha} becomes to $C_{\rho}\varphi_V(\alpha/2)<1$ where $\varphi_V$ is the analogous function of $\varphi$ defined in~\eqref{funcion-varphi-general}. Observe that now, $\varphi_V(\alpha)$ does not tend to 0 necessarily.

\begin{proof}[Proof of Lemma~\eqref{lema-cota-annealed}] 

By the cycle gas representation and using the marginal distributions of $\nu_{\theta}$ we have \begin{equation*}
\E \left[\nu_{\theta}(\widehat{K}_{f}^c(x)) \right] \leq \E \big[ \sum\limits_{\substack{\g \colon \g \ni x \\ H(\g)> f(x)}} \nu_{\theta}( \one\{\g \in \eta \} ) \big] = \E\big[ \sum\limits_{\substack{\g \colon \g \ni x \\ H(\g)> f(x)}} (1-e^{-w(\g)})\big] \,.
\end{equation*}

We want to sum over ordered supports instead of cycles. Write the weight of each cycle as a function of its order support, and recall that for each ordered support $\bar{y}$, the number of cycles that have ordered support $\bar{y}$ is bounded above by $M_{\theta}(\bar{y})$, where $M_{\theta}(\bar{y})$ was defined in~\eqref{cotaM(y)}. Then, using the linearity of $\E$, the bound~\eqref{cota-expectation-M(y)} and $1-e^{-t} \leq t$, we obtain 
\begin{equation*}
\E \big[\nu_{\theta}(\widehat{K}_{f}^c(x)) \big] \leq \E\big[ \sum_{m\geq 2} \sum\limits_{\substack{\bar{y} \colon \bar{y} \ni x \\ \bar{y}_i \neq \bar{y}_{i+1} \\ |\bar{y}|=m \\ H(\bar{y})> f(x)}} M_{\theta}(\bar{y}) (1-e^{-w(\bar{y})}) \big] \leq \sum_{m\geq 2} \sum\limits_{\substack{\bar{y} \colon \bar{y} \ni x \\ \bar{y}_i \neq \bar{y}_{i+1} \\ |\bar{y}|=m \\ H(\bar{y})> f(x)}} C_{\rho}^m w(\bar{y})\,.
\end{equation*} 
Now, using that $H(\bar{y})> f(x)$ and the definition of $\varphi$ (see~\eqref{suma-pesos-largo-m}) we have
\begin{equation*}
\E \big[\nu_{\theta}(\widehat{K}_{f}^c(x)) \big] \leq e^{-\frac{\alpha}{2} f(x)} \sum_{m\geq 2} C_{\rho}^m \sum\limits_{\substack{\bar{y} \colon \bar{y} \ni x \\ \bar{y}_i \neq \bar{y}_{i+1} \\ |\bar{y}|=m}} e^{-\frac{\alpha}{2} H(\bar{y})} = e^{-\frac{\alpha}{2} f(x)} \sum_{m\geq 2} C_{\rho}^m \, \varphi\left(\alpha/2\right)^{m}.
\end{equation*} Finally, lemma follows from the fact $C_{\rho} \varphi\left(\alpha/2\right)<1$.
\end{proof}

\begin{lema} \label{lema-cota-quenched} Let $\rho$ and $\alpha$ be in the same conditions as in Lemma~\eqref{lema-cota-annealed}. Given $\epsilon >0$, for almost every realization of $\theta$ there exists a function $f$ that depends on $\theta$, such that $\nu_{\theta}(\widehat{K}_{f}^c)<\epsilon$.
\end{lema}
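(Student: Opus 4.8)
The plan is to transfer the annealed estimate of Lemma~\ref{lema-cota-annealed} into the required quenched statement by a first--moment computation followed by a Borel--Cantelli argument. First I would reduce the problem to controlling a single sum. Since $\widehat K_f=\bigcap_{x\in\Z^d}\widehat K_f(x)$, a union bound gives
\[
\nu_{\theta}(\widehat K_f^c)\;\le\;\sum_{x\in\Z^d}\nu_{\theta}\big(\widehat K_f^c(x)\big),
\]
so it is enough to exhibit, for almost every $\theta$, a function $f$ making the right-hand side strictly smaller than $\epsilon$.

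Next I would introduce a deterministic one-parameter family of candidate thresholds. Fix an integer-valued $g\colon\Z^d\to\mathbb N$ that grows fast enough in space that $\sum_{x}e^{-\frac{\alpha}{2}g(x)}=:S<\infty$; for instance $g(x)=\big\lceil \tfrac{2(d+1)}{\alpha}\log(1+\|x\|)\big\rceil$ gives $e^{-\frac{\alpha}{2}g(x)}\le(1+\|x\|)^{-(d+1)}$, which is summable over $\Z^d$. For $k\in\mathbb N$ set $f_k:=g+k\in\mathbb N$ and
\[
Z_k:=\sum_{x\in\Z^d}\nu_{\theta}\big(\widehat K_{f_k}^c(x)\big).
\]
Applying Lemma~\ref{lema-cota-annealed} term by term, using that its constant $C(\rho,\alpha)$ is uniform in $x$ (the bound there depends only on differences of sites), and taking expectations,
\[
\E[Z_k]\;\le\;C(\rho,\alpha)\sum_{x\in\Z^d}e^{-\frac{\alpha}{2}(g(x)+k)}\;=\;C(\rho,\alpha)\,S\,e^{-\frac{\alpha}{2}k}.
\]

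Finally I would upgrade this decaying annealed bound to an almost sure statement. Since $\sum_{k}\E[Z_k]\le C(\rho,\alpha)\,S\sum_k e^{-\frac{\alpha}{2}k}<\infty$, Tonelli gives $\sum_k Z_k<\infty$ for almost every $\theta$, so $Z_k\to 0$ almost surely. In particular, for almost every realization of $\theta$ there exists a (random) $k=k(\theta)$ with $Z_{k(\theta)}<\epsilon$; taking $f:=f_{k(\theta)}=g+k(\theta)$ then yields $\nu_{\theta}(\widehat K_f^c)\le Z_{k(\theta)}<\epsilon$, as desired.

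The only genuinely delicate point is the passage over the infinite lattice: a constant threshold cannot succeed, because the sum over $x\in\Z^d$ of the annealed bounds would diverge, so $f$ must be allowed to grow with $\|x\|$. The auxiliary shift parameter $k$ then serves exactly to drive the (already finite) total first moment to zero, which is what makes Borel--Cantelli applicable; everything else is a routine use of the uniform-in-$x$ estimate of Lemma~\ref{lema-cota-annealed}.
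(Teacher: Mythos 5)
Your proof is correct and follows essentially the same route as the paper: a union bound over sites, the annealed estimate of Lemma~\ref{lema-cota-annealed} applied to a countable family of candidate thresholds growing in $\|x\|$ (the paper picks $f_n(x)$ implicitly so that $\E[\nu_\theta(\widehat K_{f_n}^c(x))]\le 1/(n^2 2^{\|x\|})$, you take $f_k=g+k$ with $g$ logarithmic), followed by swapping sum and expectation to get almost sure convergence to zero along the family. The only difference is cosmetic — your construction of the thresholds is explicit where the paper's is implicit.
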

\begin{proof} For each $n$, using~\eqref{cota-annealed} we pick $f_n(x)$ large enough such that: $\E \big[\nu_{\theta}(\widehat{K}_{f_n}^c(x)) \big] \leq 1/(n^2 2^{\|x\|})$. 
If we define $f_n: \Z^d \mapsto \mathbb{N}$ in the obvious way, we have: \begin{equation*}
\E \big[\nu_{\theta}(\widehat{K}_{f_n}^c) \big] \leq \E \big[\sum_{x\in \Z^d}\nu_{\theta}(\widehat{K}_{f_n}^c(x)) \big] \leq \sum_{x\in \Z^d} \frac{1}{n^2} \frac{1}{2^{\|x\|}} = \frac{C}{n^2}\,.
\end{equation*} 
So, we have a sequence of functions $\{f_n\}_{n \geq 1}$ such that $\E \big[ \sum_{n\geq 1} \nu_{\theta}(\widehat{K}_{f_n}^c) \big] <\infty \,.$
Therefore $\sum_{n\geq 1} \nu_{\theta}(\widehat{K}_{f_n}^c) <\infty$ for almost every realization of $\theta$. So, for almost every realization of $\theta$ there exists $n_0(\theta, \epsilon)$ such that $\nu_{\theta}(\widehat{K}_{f_n}^c) <\epsilon$ for all $n\geq n_0$.
\end{proof}

The following lemma works in the quadratic potential case. Later we discuss the proof in the case of a general potential.

\begin{lema} \label{lema-Kf-compacto}
  Consider a function $f: \Z^d \mapsto \mathbb{N}$ and the set $K_f:= \widehat{K}_f \cap S_{\theta}^F$, where $S_{\theta}^F$ is the finite cycle permutation space. Then, $K_f$ is a non-empty compact set.
\end{lema}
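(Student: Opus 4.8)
The plan is to prove non-emptiness directly and compactness via sequential compactness, exploiting that the quadratic energy bound built into $\widehat K_f$ confines cycles to bounded balls. \emph{Non-emptiness} is immediate: the identity permutation, whose gas of cycles is the empty configuration $\eta\equiv 0$, lies in $S_\theta^F$ and in $\widehat K_f$ vacuously (there is no non-trivial cycle through any site), so $\mathrm{id}\in K_f$. For compactness, since $(S_\theta,d)$ is a complete separable metric space, compactness of a subset is equivalent to sequential compactness, so it suffices to show that every sequence $(\sigma_k)\subset K_f$ has a subsequence converging to a limit in $K_f$.

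The key geometric fact, and the only place where the quadratic form of the potential is used, is that membership in $\widehat K_f$ confines the support of each cycle to a fixed finite ball. If $\gamma$ is a non-trivial cycle of some $\sigma\in K_f$ and $x$ is a site visited by $\gamma$, then $H(\gamma)\le f(x)$; since each step of the ordered support $[\gamma]$ has Euclidean length at least $1$, its length $m$ satisfies $m\le H(\gamma)\le f(x)$, and by Cauchy--Schwarz the total variation of the closed walk $[\gamma]$ is at most $\sqrt{m\,H(\gamma)}\le f(x)$. Basing the walk at $x$, every visited site lies within distance $f(x)$ of $x$, so $\{\gamma\}\subset \Omega_\theta\cap B(x,f(x))$, a finite set because $\Omega_\theta$ is locally finite. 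In particular, for a fixed point $s$ with $X(s)=x$, the cycle through $s$ of \emph{every} $\sigma\in K_f$ is supported in the fixed finite set $\Omega_\theta\cap B(x,f(x))$, so $\sigma_k(s)$ ranges over a finite set independent of $k$.

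With this in hand I carry out a diagonal extraction. Enumerating $\Omega_\theta=\{s_1,s_2,\dots\}$, the bound above lets me pass to a subsequence along which $\sigma_k(s_n)$ is eventually constant for every $n$; define $\sigma(s_n)$ to be this limit. I then check $\sigma$ is a bijection. Injectivity passes to the limit from the injectivity of the $\sigma_k$. For finiteness of cycles, I show inductively that for fixed $s$ one has $\sigma^j(s)=\sigma_k^j(s)$ for all $k$ large (depending on $j$), so the entire forward orbit of $s$ stays in the finite set $\Omega_\theta\cap B(X(s),f(X(s)))$; by injectivity the orbit must close up into a finite cycle. Surjectivity is then automatic, since each point is the $\sigma$-image of its predecessor in its own finite cycle. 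Hence $\sigma\in S_\theta$ and $\sigma\in S_\theta^F$.

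Finally I verify $\sigma\in\widehat K_f$ and the convergence $\sigma_k\to\sigma$. For the membership, given a non-trivial cycle $\gamma=(s,\sigma(s),\dots,\sigma^{\ell-1}(s))$ of $\sigma$ and a site $x$ it visits, for $k$ large one has $\sigma_k^j(s)=\sigma^j(s)$ for all $0\le j\le \ell$, so $\gamma$ is exactly a cycle of $\sigma_k$ with the same energy; as $\sigma_k\in\widehat K_f$ this gives $H(\gamma)\le f(x)$, and thus $\sigma\in\widehat K_f$. For the convergence, the set $\{s:\|X(s)\|\le R\}$ is finite, and on it $\sigma_k$ eventually agrees with $\sigma$, forcing $d(\sigma_k,\sigma)\to0$; closedness of $K_f$ is subsumed in this. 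I expect the confinement estimate of the second paragraph to be the main obstacle: it is what simultaneously forces the limiting cycles to be finite and makes the diagonal extraction possible, and it is exactly here that the quadratic potential (through the coercive lower bound on step lengths and on ordered-support length) is essential. For a general potential this step must be replaced by an argument using $L_V$ and a good density, as discussed in the next subsection.
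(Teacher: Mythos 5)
Your proof is correct, and its skeleton matches the paper's: non-emptiness via the identity, compactness via a diagonal extraction made possible by the energy bound, and then a verification that the limit is a finite cycle permutation. But the mechanics of the key step differ. The paper only bounds the \emph{single} jump, $\|X(\s_n(s))-X(s)\|^2\le H(\g)\le f(X(s))$, to run the extraction, and then proves finiteness of the limit's cycles by contradiction: an infinite cycle of $\s$ would force, in some $\s_n$ agreeing with $\s$ along $f(X(s))+1$ site-changing jumps, a cycle of energy exceeding $f(X(s))$. You instead prove a stronger confinement lemma up front --- the ordered support has length $m\le H(\g)\le f(x)$ and, by Cauchy--Schwarz, total variation at most $\sqrt{mH(\g)}\le f(x)$, so the whole cycle through $x$ lives in $\Omega_\theta\cap B(x,f(x))$ --- and then finiteness of the limit cycles follows softly: the forward orbit of $s$ under $\s$ is trapped in a fixed finite set, so by injectivity it must close up. Both arguments rest on the same fact (jumps between distinct sites cost at least $1$), but your route buys two things: it avoids the contradiction/subsequence bookkeeping, and, more importantly, you explicitly verify that the limit lies in $\widehat K_f$ (closedness, by noting the limit's cycles are eventually exact cycles of the $\s_k$) and that it is a genuine bijection --- points the paper's proof leaves implicit, even though compactness of $K_f$ requires them. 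Your closing remark that the confinement step is exactly what breaks for general $V$ with $L_V\ge 1$, requiring the good-density hypothesis, is also consistent with the paper's discussion in the following subsection.
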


\begin{proof} The gas of cycles representation for the identity is the null configuration, thus $\text{id} \in \widehat{K}_f(x)$ for all $x$ and any choice of $f(x)$.

Recall that a sequence of permutations $\{\s_n\}$ converges to $\s$ if and only if $\s_n(s)\to \s(s)$ for all $s\in \Omega_{\theta}$. Let $\{\s_n\}_{n\geq 1}$ be a sequence on $K_f$ and fix $s\in \Omega_{\theta}$. Definition of $K_f$ implies that \[ \|\s_n(s)-s\|^2 \leq \|X(\s_n(s))- X(s)\|^2 + R(s) \leq f(X(s)) + R(s),\] where $R(s)= \max\{\theta(x) \colon \|x-X(s)\|^2 \leq f(X(s))\}$. So, $\{\s_n(s)\}_{n\geq 1}$ is bounded and it has a convergent subsequence. Since $\Omega_{\theta}$ is countable and $S_{\theta}$ is a complete metric space, a Cantor's diagonal argument follows to prove that $\{\s_n\}$ has a subsequential limit $\s$. We write also $\{\s_n\}$ for the convergent subsequence.

It only remains to prove that $\s$ is a finite cycle permutation. On the contrary, suppose that the point $s$ is contained on an infinite cycle, i.e., $\s^j(s)\neq s$ for all $j\in \Z$. Choose a subsequence $\{{j_k}\}$ such that $X(\s^{j_k}(s))\neq X(\s^{j_{k-1}}(s))$ for all $k$. Since $\s_n \to \s$ we can pick $n$ large enough such that $\s_n^{j_k}(s) = \s^{j_k}(s)$ for all $k=1, \dots, k_0$ being $k_0=f(X(s))+1$. Then, calling $\g'$ for the cycle that contains $s$ in the permutation $\s_n$: \[H(\g')\geq \sum_{k=1}^{k_0} \|X(\s_n^{j_k}(s))-X(\s_n^{j_k-1}(s))\|^2 = \sum_{k=1}^{k_0} \|X(\s^{j_k}(s))-X(\s^{j_k-1}(s))\|^2 \geq f(X(s))+1\,,\] which contradicts that $\s_n\in K_f$. Hence, $\s$ is a finite cycle permutation.
\end{proof}

Recall that $S_{\theta}^F$ has a natural partial order, i.e., $\eta \leq \eta'$ if $\eta(\g)\leq \eta'(\g)$ for all $\g \in \Gamma_{\theta}$. So, an event $A\subset S_{\theta}^F$ is increasing when $\one_A$ is an increasing function with respect to the partial order.

\begin{lema} \label{lema-Kf-creciente} Set $K_f^c = \widehat{K}_f^c \cap S_{\theta}^F$. Then $K_f^c$ is an increasing event.
\end{lema}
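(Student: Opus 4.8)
The plan is to unwind the definition of an increasing event and reduce the claim to the elementary observation that enlarging a gas of cycles can only add more cycles that violate the energy constraint defining $\widehat{K}_f$. Recall that $\one_{K_f^c}$ is increasing precisely when $K_f^c$ is upward closed, i.e. whenever $\eta \leq \eta'$ with $\eta, \eta' \in S_{\theta}^F$ and $\eta \in K_f^c$, one also has $\eta' \in K_f^c$. So the whole statement amounts to establishing this upward closure.

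First I would record what the partial order means on $S_{\theta}^F$. Since a finite cycle permutation is encoded as a $\{0,1\}$-valued configuration $\eta \in \{0,1\}^{\Gamma_{\theta}}$, the relation $\eta \leq \eta'$ says that $\eta(\g) \leq \eta'(\g)$ for every $\g \in \Gamma_{\theta}$, which for $0/1$ configurations is equivalent to inclusion of the associated cycle sets: every cycle present in $\eta$ is also present in $\eta'$. In particular, if $\g \in \eta$ and $\eta \leq \eta'$, then $\g \in \eta'$.

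Next I would characterize membership in $\widehat{K}_f^c$ as the existence of a single offending cycle. Unwinding $\widehat{K}_f = \bigcap_{x} \widehat{K}_f(x)$, a configuration $\eta$ lies in $\widehat{K}_f^c$ if and only if there exist a site $x \in \Z^d$ and a cycle $\g \in \eta$ with $x \in \g$ and $H(\g) > f(x)$. This is a property of the \emph{set} of cycles present in $\eta$: it holds as soon as $\eta$ contains at least one cycle $\g$ for which $H(\g)$ exceeds $f$ at some site visited by $\g$.

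Combining the two observations concludes the argument. Suppose $\eta \in K_f^c$ and $\eta \leq \eta'$ with $\eta' \in S_{\theta}^F$. Then $\eta \in \widehat{K}_f^c$, so there are a cycle $\g \in \eta$ and a site $x \in \g$ with $H(\g) > f(x)$. By the monotonicity of the cycle sets, $\g \in \eta'$ as well, and the same site $x$ witnesses $\eta' \in \widehat{K}_f^c$; since $\eta' \in S_{\theta}^F$, we obtain $\eta' \in K_f^c$. I do not expect any genuine obstacle here: the only point requiring care is that the partial order on $\{0,1\}^{\Gamma_{\theta}}$ corresponds exactly to inclusion of the cycles present, after which the existence of a single bad cycle is manifestly preserved under adding cycles.
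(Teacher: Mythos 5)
Your proposal is correct and follows essentially the same argument as the paper: exhibit a single offending cycle $\g\in\eta$ with $x\in\g$ and $H(\g)>f(x)$, note that $\eta\leq\eta'$ forces $\g\in\eta'$, and conclude that the same cycle witnesses $\eta'\in\widehat{K}_f^c$. The only difference is that you spell out the identification of the partial order with inclusion of cycle sets and the intersection with $S_{\theta}^F$ explicitly, which the paper leaves implicit.
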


\begin{proof} Fix $\eta \in \widehat{K}_f^c(x)$ and let be $\eta'$ such that $\eta \leq \eta$. By definition of $\widehat{K}_f^c(x)$ there exists $\g\in \eta$ such that $x\in\g$ and $H(\g)>f(x)$. The fact $\g\in \eta$ implies that $\g\in \eta'$, and so, using the same cycle $\g$ ones proves that $\eta'\in \widehat{K}_f^c(x)$. 
\end{proof}

The next result is a general fact from Gibbs measures theory, for this reason we omit its proof. 

\begin{lema} \label{lim-debil-es-gibbs} 
Let $\{\La_n\}_{n\geq 1} \Subset \Z^d$ an increasing sequence such that $\La_n \uparrow \Z^d$ and $\{G_{\theta, \La_n}^{\text{id}}\}_{n\geq 1}$ converges weakly to a probability measure $\mu$. Then, $\mu$ is a Gibbs measure.
\end{lema}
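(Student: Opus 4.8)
The plan is to verify the DLR equations directly, exploiting the consistency of the finite-volume specifications together with the assumed weak convergence. Recall that $\mu$ is Gibbs precisely when $\mu = \mu G_{\theta,\La}$ for every $\La \Subset \Z^d$, that is $\mu(A) = \int G_{\theta,\La}^{\xi}(A)\,\dd\mu(\xi)$ for all $A\in\cF_\theta$. By a monotone class argument it suffices to establish this identity for $A$ ranging over a convergence-determining algebra of local (cylinder) events, so I fix such an $A$ and a finite box $\La$ for the remainder.

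First I would record the consistency of the family $\{G_{\theta,\La}^{\xi}\}$: for any pair of finite volumes $\La \subseteq \La_n$ one has the kernel identity $G_{\theta,\La_n}^{\mathrm{id}} G_{\theta,\La} = G_{\theta,\La_n}^{\mathrm{id}}$, i.e.
\[
\int G_{\theta,\La}^{\xi}(A)\, G_{\theta,\La_n}^{\mathrm{id}}(\dd\xi) = G_{\theta,\La_n}^{\mathrm{id}}(A).
\]
This is immediate from the Gibbsian form~\eqref{especificaciones}: the Hamiltonian restricted to $\La_n$ splits as $H_{\theta,\La_n} = H_{\theta,\La} + (\text{terms not indexed by } \La)$, the normalizations cancel, and conditioning $G_{\theta,\La_n}^{\mathrm{id}}$ on the configuration outside $\La$ reproduces exactly $G_{\theta,\La}^{\xi}$.

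I would then let $n\to\infty$ along the given sequence $\La_n \uparrow \Z^d$. For $n$ large enough that $\La \subseteq \La_n$ the consistency identity holds, so it remains to pass to the limit on both sides. The right-hand side satisfies $G_{\theta,\La_n}^{\mathrm{id}}(A) \to \mu(A)$ directly by weak convergence, since $A$ is local. For the left-hand side I would write $g(\xi) := G_{\theta,\La}^{\xi}(A)$, a bounded function of $\xi$, and argue $\int g\, \dd G_{\theta,\La_n}^{\mathrm{id}} \to \int g\, \dd\mu$. This is where the bulk of the work lies, since weak convergence transfers to $\int g\,\dd(\cdot)$ only when $g$ is continuous on the support of the measures involved.

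The main obstacle is therefore the quasilocality (Feller property) of the specification, namely the continuity of $\xi \mapsto G_{\theta,\La}^{\xi}(A)$. The dependence on $\xi$ enters only through the set $B(\xi,\La)$ of cycles crossing $\partial\La$, and the difficulty is that such cycles may a priori extend arbitrarily far outside $\La$, so agreement of $\xi_n$ and $\xi$ on large balls need not fix $B(\xi_n,\La)$. To control this I would restrict attention to finite cycle boundary conditions—legitimate because $\mu$ and every $G_{\theta,\La_n}^{\mathrm{id}}$ concentrate on $S_\theta^F$—and invoke the tightness estimates of Lemmas~\ref{lema-cota-annealed}--\ref{lema-cota-quenched}: outside an event of arbitrarily small $\nu_\theta$-measure, no cycle meeting a neighbourhood of $\La$ has energy exceeding a fixed threshold, which caps the spatial extent of the crossing cycles. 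On this high-probability set $B(\xi,\La)$ is determined by $\xi$ restricted to a fixed finite region, so $g$ is continuous there and the exceptional contribution is uniformly small. Combining the two limits yields $\mu(A) = \int G_{\theta,\La}^{\xi}(A)\,\dd\mu(\xi)$ for every local $A$, and the monotone class extension gives the DLR equations for all $A\in\cF_\theta$, so $\mu$ is Gibbs.
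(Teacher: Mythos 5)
The paper never actually proves this lemma---it is set aside with the remark that it is ``a general fact from Gibbs measures theory''---so your proposal can only be measured against the standard argument that remark alludes to, and your skeleton (consistency of the kernels, passage to the limit via weak convergence, then a $\pi$-system/monotone-class extension from cylinder events) is exactly that argument. The consistency step and the convergence of the right-hand side are handled correctly: cylinder events are clopen for the metric on $S_\theta$, so their indicators are continuous. Where you go astray is in the diagnosis of the key step: the failure of quasilocality that you identify as ``the main obstacle'' is illusory. For a local event $A$, the value $G_{\theta,\La}^{\xi}(A)$ does not depend on the crossing cycles $B(\xi,\La)$ in their entirety, but only on (i) which points of $\La$ lie on $\xi$-orbits that exit $\La$, and (ii) the values of $\xi$ at the finitely many coordinates defining $A$. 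Item (i) is determined by $\xi$ restricted to $\La$ alone: for $s\in\La$, follow the forward orbit $s,\xi(s),\xi^2(s),\dots$; since $\La$ contains finitely many points and $\xi$ is injective, either the orbit returns to $s$ before leaving $\La$ (so $s$ is resampled) or it exits $\La$ after finitely many steps, all taken inside $\La$ (so $s$ is frozen, with frozen value $\xi(s)$). Hence the frozen/resampled decomposition of $\La$, and with it the conditional resampling measure, is a function of $\xi|_{\La}$, so $\xi\mapsto G_{\theta,\La}^{\xi}(A)$ is locally constant, hence bounded and continuous on all of $S_\theta$; the convergence $\int G_{\theta,\La}^{\xi}(A)\, G_{\theta,\La_n}^{\text{id}}(\dd\xi)\to\int G_{\theta,\La}^{\xi}(A)\,\dd\mu(\xi)$ follows from weak convergence with no further input. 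This is presumably why the authors felt entitled to call the lemma a general fact.

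Your detour is therefore not wrong so much as unnecessary, and it has two costs worth flagging. First, it weakens the statement: the estimates of Lemmas~\ref{lema-cota-annealed} and~\ref{lema-cota-quenched} require $C_\rho\,\varphi(\alpha/2)<1$, and the concentration $\mu(S_\theta^F)=1$, which you assert as ``legitimate,'' is not a hypothesis of the lemma and must itself be extracted from tightness (via the compact sets $K_f\subset S_\theta^F$ of Lemma~\ref{lema-Kf-compacto} and the portmanteau theorem), whereas the lemma as stated carries no restriction on $\rho$ or $\alpha$. Second, the final step of your plan is genuinely incomplete as written: continuity of $g(\xi)=G_{\theta,\La}^{\xi}(A)$ off a set of small measure does not by itself yield $\int g\,\dd G_{\theta,\La_n}^{\text{id}}\to\int g\,\dd\mu$; one needs in addition that the exceptional set is open (so its $\mu$-measure can be controlled through the portmanteau theorem, since weak convergence only bounds $\mu$ on open sets from below along the approximating sequence) together with an approximation of $g$ by a function continuous on all of $S_\theta$ agreeing with it on the good set. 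Both issues evaporate once one observes the local constancy above.
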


\begin{lema} Let $\rho$ and $\alpha$ satisfying conditions of Lemma~\eqref{lema-cota-annealed}. Then for almost every realization of $\theta$ there exists a Gibbs measure $\mu_{\theta}$ related to temperature $\alpha$ and specifications defined in~\eqref{especificaciones}.
\end{lema}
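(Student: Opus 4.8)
The plan is to prove tightness of the family $\{G_{\theta, \La}^{\text{id}}\}_{\La\Subset\Z^d}$ for almost every realization of $\theta$, and then to extract a weakly convergent subsequence along some $\La_n\uparrow\Z^d$; its limit will be a Gibbs measure by Lemma~\ref{lim-debil-es-gibbs}. All the analytic work needed for tightness has been prepared in the preceding lemmas, so the argument is essentially an assembly of them in the right order.

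First I would fix $\epsilon>0$ and invoke Lemma~\ref{lema-cota-quenched}: for almost every $\theta$ there is a function $f=f_{\theta,\epsilon}\colon\Z^d\to\mathbb{N}$, depending on $\theta$ and $\epsilon$ but crucially \emph{not} on $\La$, with $\nu_{\theta}(\widehat{K}_f^c)<\epsilon$. By Lemma~\ref{lema-Kf-compacto} the associated set $K_f=\widehat{K}_f\cap S_{\theta}^F$ is a non-empty compact subset of $S_{\theta}$, and it will serve as the compact set certifying tightness.

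The key step is the uniform-in-$\La$ tail bound. Since every $G_{\theta,\La}^{\text{id}}$ is supported on finite cycle permutations, $G_{\theta,\La}^{\text{id}}(K_f^c)=G_{\theta,\La}^{\text{id}}(\widehat{K}_f^c)$. The event $\widehat{K}_f^c$ is increasing for the partial order on $\mathbb{N}_0^{\Gamma_{\theta}}$ (this is what the proof of Lemma~\ref{lema-Kf-creciente} establishes), and by Lemma~\ref{dominacion-de-especificaciones} with $\xi=\text{id}$, for which $\nu_{\theta,\La}^{\text{id}}=\nu_{\theta}$, the specification $G_{\theta,\La}^{\text{id}}$ is stochastically dominated by $\nu_{\theta}$. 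Applying the domination to the increasing event yields
\[
G_{\theta,\La}^{\text{id}}(K_f^c)=G_{\theta,\La}^{\text{id}}(\widehat{K}_f^c)\le \nu_{\theta}(\widehat{K}_f^c)<\epsilon
\]
for every $\La\Subset\Z^d$ simultaneously, precisely because $f$ and $K_f$ do not depend on $\La$.

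Finally, having $\sup_{\La}G_{\theta,\La}^{\text{id}}(K_f^c)<\epsilon$ for arbitrary $\epsilon>0$ is exactly tightness of $\{G_{\theta,\La}^{\text{id}}\}_{\La\Subset\Z^d}$, so Prokhorov's theorem furnishes an increasing sequence $\La_n\uparrow\Z^d$ along which $G_{\theta,\La_n}^{\text{id}}$ converges weakly to some probability $\mu_{\theta}$, and Lemma~\ref{lim-debil-es-gibbs} identifies $\mu_{\theta}$ as a Gibbs measure compatible with the specifications~\eqref{especificaciones}. The only delicate point — and the reason the lemmas were arranged as they are — is that the compact set must be selected once, independently of the volume: this is what forces the combination of the single $\theta$-dependent $f$ from Lemma~\ref{lema-cota-quenched} with the increasing structure of $\widehat{K}_f^c$ and the stochastic domination of Lemma~\ref{dominacion-de-especificaciones}, so that one volume-independent compact set controls all the specifications at once.
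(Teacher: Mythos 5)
Your proposal is correct and follows essentially the same argument as the paper: stochastic domination of $G_{\theta,\La}^{\text{id}}$ by $\nu_{\theta}$ (Lemma~\ref{dominacion-de-especificaciones}) applied to the increasing event $K_f^c$, the quenched choice of a volume-independent $f$ from Lemma~\ref{lema-cota-quenched}, compactness of $K_f$ from Lemma~\ref{lema-Kf-compacto}, and then a subsequential weak limit identified as a Gibbs measure via Lemma~\ref{lim-debil-es-gibbs}. Your explicit remark that $G_{\theta,\La}^{\text{id}}(K_f^c)=G_{\theta,\La}^{\text{id}}(\widehat{K}_f^c)$ because the specifications are supported on finite cycle permutations is a welcome clarification of a point the paper glosses over, but it does not change the argument.
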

\begin{proof} First we prove that the family of specifications $\{G_{\theta, \La}^{\text{id}}\}_{\La\Subset \Z^d}$ is tight. By Lemma~\eqref{dominacion-de-especificaciones} we know that $\nu_{\theta}$ stochastically dominates $G_{\theta,\La}^{\text{id}}$ for all $\La\Subset \Z^d$ and, so, as $K_f^c$ is an increasing event we obtain: \[\sup_{\La \Subset \Z^d} G_{\theta, \La}^{\text{id}}(K_f^c) \leq \nu_{\theta}(K_f^c)\,, \hspace{0.8cm} \theta \text{ a.s..} \] 

Given $\epsilon>0$, by Lemma~\eqref{lema-cota-quenched} for almost every realization of $\theta$ there exists a function $f$, such that $\nu_{\theta}(K_f^c)<\epsilon$. Combining this with the previous inequality and the compactness of $K_f$, tightness follows. Now, we have a subsequential limit $\mu_{\theta}$ and Lemma~\eqref{lim-debil-es-gibbs} shows that $\mu_{\theta}$ is a Gibbs measure.
\end{proof}

\subsection{Remarks for the general potential case}

Previous results hold also when the Hamiltonian is given by a potential $V$ satisfying~\eqref{funcion-varphi-general} instead the quadratic potential. The only relevant difference is in Lemma~\eqref{lema-Kf-compacto} to prove that $K_f$ is compact, concretely in the proof that limit permutation $\s$ has only finite cycles in its decomposition (other steps can be adapted). Other proofs work line by line, using $\varphi_V$ with $\alpha \geq \alpha_0$ in the statements instead of $\varphi$. 

To show that $\s$ is a finite cycle permutation we need to ensure that any infinite cycle has infinite energy associated to the Hamiltonian. In the case when $V$ is strictly positive on $[1,+\infty)$ the previous proof works, since each jump among points located in different sites contributes non-zero to $H_V$.

Otherwise, if $L_V\geq 1$, where $L_V= \sup\{\|x\| \colon V(\|x\|)=0\}$, it can exist infinite cycles with finite energy. To skip this problem we take the density $\rho$ small enough to ensure that the event that there exists an infinite sequence of points $\{s_i\}_{i\in \mathbb{N}} \subset \Omega_{\theta}$ such that $\|X(s_i) - X(s_{i+1})\|\geq L_V$, has zero probability with respect to the environment. This is exactly our definition that $\rho$ is good density for the potential $V$ in~\eqref{L_V} and the condition that appears in the Theorem~\eqref{Teo-V-general}.


\section{Uniqueness of Gibbs measures}\label{section-uniqueness}

Proved the existence of a Gibbs measure we want to prove that it is unique. Specifically, we want to show that if $\mu$ and $\mu'$ are Gibbs measures supported on the finite cycle permutations then $\mu=\mu'$. For that we consider the product measure $\mu\otimes \mu'$, which is a Gibbs measure with respect to the product specifications $G_{\theta, \La_1}^{\xi_1}\otimes G_{\theta, \La_2}^{\xi_2}$ with $\xi_1$, $\xi_2$ and $\La_1,\La_2\Subset \Z^d$. 

As in the previous section we focus in the quadratic potential case. For a potential $V$ the difference is to use $\varphi_V$ instead of $\varphi$ in each statement.

For the rest of this section $\mu$ and $\mu'$ will be Gibbs measures supported on finite cycle permutations.

\begin{df} We say that $\Delta \subset \Z^d$ separates $\eta \in \mathbb{N}_0^{\Gamma_{\theta}}$ when for all $\g\in \eta$ we have $\{\g\} \subset \Delta$ or $\{\g\} \subset \Delta^c$. For a pair $(\eta,\eta')$ we say that the pair is separated by $\Delta \subset \Z^d$ when both coordinates are separated by $\Delta$. 
\end{df}

The separating set property is closed by unions. Indeed, let $\Delta_1$ and $\Delta_2$ such that both are separating sets for $(\eta,\eta')$. If $\g\in \eta$ is such that $\{\g\}\subset \Delta_1, \Delta_2$, we have $\{\g\}\subset \Delta_1 \cup \Delta_2$. In other case, as $\Delta_1$ and $\Delta_1$ are separating sets, we have $\{\g\}\subset \Delta_1^c \cap \Delta_2^c$. So, $\Delta_1 \cup \Delta_2$ is a separating set for $\eta$ and the same holds for $\eta'$.

Denote by $\Lambda_l$ the box $[-l,l]^d\cap \Z^d$. Let $A_n$ be the event that exists $\Delta\Subset \Z^d$ such that $\Delta$ separates $(\eta,\eta')$ and $\Delta \supset \Lambda_n$. Note that $A_{n+1}\subset A_{n}$ and define $A=\cap_{n\geq 1} A_n$. Observe also that $A$ and $A_n$ are decreasing events with respect to the partial order of $\mathbb{N}_0^{\Gamma_{\theta}}$. 

Our goal is to prove that for sufficiently large $\alpha$ the event $A$ has full measure with respect to the product measure $\mu\otimes \mu'$. Then we use the existence of an arbitrary large separating set to prove that $\mu$ and $\mu'$ are equal to the weak limit of specifications with identity boundary condition.

We say that cycles $\g, \g' \in \Gamma_{\theta}$ are neighbors, and we write $\g\Join\g'$, if exists $s$, $s'\in \Omega_{\theta}$ such that $s\in \g$, $s'\in \g'$ and $X(s)=X(s')$. In terms of ordered supports $\g$ and $\g'$ are neighbors when $\{[\g]\}\cap\{[\g]'\}\neq \emptyset$. A path of cycles of length $n$ is a sequence of $n$ different cycles $\g_1, \dots,\g_n$ such that $\g_i \Join \g_{i+1}$ for $i=1,\dots,n-1$. The idea is to consider a random subgraph of $(\Gamma_{\theta}, \Join)$ and ask about percolation on it, i.e., the existence of an infinite path of cycles with positive probability.

Fix $(\eta,\eta')\in \NN^{\Gamma_{\theta}} \times \NN^{\Gamma_{\theta}}$. We declare that $\g$ is open when $\eta(\g)+ \eta'(\g)\geq 1$. A path is open when it is composed by open cycles. A cycle is a trivial cycle if only uses points located at the same site. We are interested in open paths that use non-trivial cycles.

Fix $x_0\in \Z^d$ with $\theta(x_0)\neq 0$. Let $D(n)$ be the event that there exists an open path of length $n$ formed only by non-trivial cycles and for which the first cycle contains $x_0$. Of course $D(n+1)\subset D(n)$ for all $n$. There are paths of length $n$ that are not included in $D(n)$ since we only allow non-trivial cycles. However, an infinite path of cycles exists if and only if there exists an infinite path using only non-trivial cycles. 

To enunciate the next lemma recall that $r_0$ is the unique solution in $[0,1]$ of the equation $\frac{r}{(1-r)^2}-r=\frac{1}{2}$. Note that $r<r_0$ implies $\sum_{m\ge 2} m r^m = \frac{r}{(1-r)^2}-r<\frac{1}{2}$.

\begin{lema} \label{no-percolacion-nu-times-nu} Suppose that $\rho\in(0,1/2)$ and $\alpha>0$ satisfy $C_{\rho} \varphi(\alpha)<r_0$, where $C_{\rho}$ is the constant that appears in Lemma~\eqref{lema-cota-annealed} and $\varphi$ is the function defined in~\eqref{funcion-varphi-general}. Let $\xi$, $\xi'$ be finite cycle permutations and $\Lambda \Subset \Z^d$.

Consider the pair $(\eta$,$\eta')$ independently sampled according to $\nu_{\theta,\Lambda}^{\xi} \otimes \nu_{\theta,\Lambda}^{\xi'}$ and the graph structure induced by it on $\Gamma_{\theta}$. 

Then for almost every realization of $\theta$, the event that there exists an infinite open path of cycles has zero probability with respect to $\nu_{\theta,\Lambda}^{\xi} \otimes \nu_{\theta,\Lambda}^{\xi'}$.

\end{lema}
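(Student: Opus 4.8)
The plan is to prove the absence of an infinite open path by a first-moment (union bound) argument over self-avoiding paths of cycles, controlling each path's probability through the product structure of $\nu_{\theta,\Lambda}^{\xi}\otimes\nu_{\theta,\Lambda}^{\xi'}$ and summing over all paths rooted at a fixed site. Since the existence of an infinite path is equivalent to the existence of an infinite path through non-trivial cycles (as noted before the statement), and since an infinite path rooted anywhere forces an infinite path rooted at one of countably many sites, it suffices to show that for each fixed $x_0$ with $\theta(x_0)\neq 0$ the probability of $D(n)$ tends to $0$. I would therefore estimate $\nu_{\theta,\Lambda}^{\xi}\otimes\nu_{\theta,\Lambda}^{\xi'}(D(n))$ and show it decays geometrically in $n$, so that $\P$-a.s.\ no infinite open path exists.

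\textbf{Bounding a single path.} First I would observe that a cycle $\g$ is open under the product measure when $\eta(\g)+\eta'(\g)\geq 1$, and by independence and the marginal distributions of $\nu_{\theta,\Lambda}^{\xi}$ (Poisson$(w(\g))$, or $1+$Poisson for boundary cycles), the probability that $\g$ is open is at most $1-e^{-2w(\g)}\leq 2w(\g)$ for non-boundary cycles, with the boundary contribution affecting only finitely many cycles and hence not the tail. The key combinatorial input is that the weight $w(\g)$ depends only on the ordered support $[\g]$, and that for a fixed ordered support there are at most $M_\theta(\bar y)$ cycles, with $\E[M_\theta(\bar y)]\leq C_\rho^{|\bar y|}$ by~\eqref{cota-expectation-M(y)}. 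For a path $\g_1\Join\cdots\Join\g_n$ of non-trivial cycles, the neighbor relation $\g_i\Join\g_{i+1}$ means their ordered supports share a site, so I would encode each path by its sequence of ordered supports $\bar y_1,\dots,\bar y_n$ where consecutive ones intersect.

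\textbf{Summing over paths.} The heart of the estimate is to bound $\E\big[\nu_{\theta,\Lambda}^{\xi}\otimes\nu_{\theta,\Lambda}^{\xi'}(D(n))\big]$ by summing, over all chains of ordered supports rooted at $x_0$, the product $\prod_i 2w(\bar y_i)\,M_\theta(\bar y_i)$, then taking expectation and using~\eqref{cota-expectation-M(y)} to replace each $M_\theta(\bar y_i)$ by $C_\rho^{|\bar y_i|}$. Using~\eqref{suma-pesos-largo-m}, the sum over a single ordered support of length $m$ through a prescribed site contributes $\varphi(\alpha)^m$, and summing over lengths $m\geq 2$ with the factor $m$ (the number of sites at which the next support can attach) produces $\sum_{m\geq 2} m\,(C_\rho\varphi(\alpha))^m < \tfrac12$ precisely under the hypothesis $C_\rho\varphi(\alpha)<r_0$, by the defining property of $r_0$ recalled before the lemma. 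Each step of the path thus contributes a factor strictly below $1$ (after absorbing the factor $2$), giving geometric decay $\E[\,\cdots D(n)\,]\leq C\,q^n$ for some $q<1$; Borel–Cantelli then yields that $\P$-a.s.\ $D(n)$ fails for large $n$, hence no infinite path.

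\textbf{Main obstacle.} I expect the principal difficulty to be making the path-counting bookkeeping rigorous while keeping the per-step factor below $1$: one must be careful that the multiplicity factors $M_\theta(\bar y_i)$ appearing along the path are controlled \emph{in expectation as a product}, which is legitimate here because the $M_\theta(\bar y_i)$ for distinct sites involve independent $\theta$-values, but requires attention when ordered supports revisit sites. The cleanest route is to organize the union bound so that each successive cycle is attached at a site of the previous one's ordered support, charging a factor $m$ for the choice of attachment site and a factor $\varphi(\alpha)^{m'}$ for the new support of length $m'$, and to verify that the expectation of the product of multiplicities factorizes or is dominated by $\prod_i C_\rho^{|\bar y_i|}$ using independence of $\theta$ across sites together with the monotonicity in~\eqref{cota-expectation-M(y)}. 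Once the per-step contribution is identified with $\sum_{m\geq 2} m (C_\rho\varphi(\alpha))^m$, the threshold $r_0$ and the stated hypothesis close the argument.
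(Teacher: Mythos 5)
Your overall strategy coincides with the paper's: reduce to identity boundary conditions (the paper does this by coupling $\nu_{\theta}\otimes\nu_{\theta}$ with $\nu_{\theta,\Lambda}^{\xi}\otimes\nu_{\theta,\Lambda}^{\xi'}$ so that they differ in only finitely many cycles, which is the rigorous form of your "boundary cycles do not affect the tail" remark), then a first-moment bound on $D(n)$, an encoding of paths by sequences of ordered supports with consecutive intersections, the per-step factor $2\sum_{m\ge 2} m\,(C_\rho\varphi(\alpha))^m<1$ coming from $C_\rho\varphi(\alpha)<r_0$, and finally summability of the annealed expectations. However, there is a genuine gap exactly at the step you flag as the main obstacle, and your proposed resolution cannot work. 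You bound the number of paths with prescribed ordered supports by $\prod_i M_\theta(\bar y_i)$ and then want $\E\big[\prod_i M_\theta(\bar y_i)\big]\le \prod_i C_\rho^{|\bar y_i|}$, to be justified by "independence of $\theta$ across sites together with monotonicity". But consecutive supports in a path \emph{must} share a site (that is what $\g_i\Join\g_{i+1}$ means), and at a shared site $z$ the product $M_\theta(\bar y_i)M_\theta(\bar y_{i+1})$ contains the factor $\big(\theta(z)!\,2^{\theta(z)}\big)^2$, whose expectation is $e^{-\rho}\sum_{k\ge 1}k!\,(4\rho)^k=\infty$ for every $\rho>0$. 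So for every path of length at least $2$ the quantity you propose to control is not just hard to factorize: it has infinite $\P$-expectation, and neither independence across sites nor monotonicity can rescue it. The underlying point is that $\theta(z)!\,2^{\theta(z)}$ has a finite first moment (this is precisely the standing assumption $\E[\theta(x)!\,2^{\theta(x)}]<\infty$) but no finite higher moments, because of the factorial.

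The paper avoids this by never multiplying individual multiplicity bounds. Remark~\eqref{cota-indep-orden} bounds the number of \emph{tuples} $(\g_1,\dots,\g_n)$ with ordered supports $(\bar y_1,\dots,\bar y_n)$ by the single quantity $M_\theta(\bar y_1\cdots\bar y_n)$, i.e.\ the bound \eqref{cotaM(y)} evaluated on the concatenated vector; there, every distinct site contributes exactly one factor $\tfrac{e^{1/2}}{2}\,\theta(z)!\,2^{\theta(z)}$, no matter how many of the supports visit it. Consequently \eqref{cota-expectation-M(y)} applies to the concatenation and gives $\E\big[M_\theta(\bar y_1\cdots\bar y_n)\big]\le C_\rho^{\sum_i|\bar y_i|}=\prod_i C_\rho^{|\bar y_i|}$, which is exactly the estimate your computation needs. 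If you replace your product-of-multiplicities bound by this joint bound, the rest of your argument (the attachment-site factor $m$, the sum \eqref{suma-pesos-largo-m}, the threshold $r_0$, and the almost sure conclusion from $\E\big[\sum_n\nu_\theta\otimes\nu_\theta(D(n))\big]<\infty$) goes through as in the paper; note only that the decay of $\nu_\theta\otimes\nu_\theta(D(n))$ holds $\P$-almost surely by summability of the annealed expectations, rather than by a Borel--Cantelli statement about $D(n)$ under $\P$ itself.
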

\begin{proof} It is sufficient to show that $\lim_{n\to +\infty} \nu_{\theta}\otimes \nu_{\theta}(D(n))=0$. Indeed, there is a natural coupling between $\nu_{\theta}\otimes \nu_{\theta}$ and $\nu_{\theta,\Lambda}^{\xi} \otimes \nu_{\theta,\Lambda}^{\xi'}$ such that the former is stochastically dominated by the last one but both realizations have only a finite number of differences (see remark below~\eqref{generador-free-xi}). So, the graph induced by the realization of $\nu_{\theta,\Lambda}^{\xi} \otimes \nu_{\theta,\Lambda}^{\xi'}$ has more open cycles but only a finite number of them. So, the existence of an infinite open path under one measure is equivalent to the same event for the other.

Now, we compute the annealed expectation of $D(n)$ with respect to $\nu_{\theta} \otimes \nu_{\theta}$. Using the marginals distribution and independence, we obtain \begin{equation*}
\E[\nu_{\theta}\otimes \nu_{\theta}(D(n))] = \E[\sum_{\substack{ \g_1, \dots,\g_n}} \prod_{i=1}^n(1-e^{-2w(\g_i)})] \leq \E [\sum_{\substack{ \g_1, \dots,\g_n}} \prod_{i=1}^n 2w(\g_i)]\,, 
\end{equation*} where the sum denoted by $\sum_{\g_1, \dots,\g_n}$ is over all paths of length $n$ formed by non-trivial cycles and for which $x_0\in \g_1$. Any non-trivial cycle has an ordered support, so, we will sum over sequences of ordered supports instead cycles. Concretely, we sum over all sequences of $n$ ordered supports $\bar{y}_1, \dots, \bar{y}_n$ such that $x_0\in\{\bar{y}_1\}$ and $\{\bar{y}_i\}\cap \{\bar{y}_{i+1}\} \neq \emptyset$ for all $i$. This sum is denoted by $\sum_{\substack{\bar{y}_1, \dots, \bar{y}_n}}$. Recall that $M_{\theta}(\bar{y}_1 \dots \bar{y}_n)$ is an upper bound for the number of cycles $\g_1,\dots,\g_n$ that have ordered supports $\bar{y}_1,\dots,\bar{y}_n$ respectively (see Remark~\eqref{cota-indep-orden}). Combining these with the bound obtained in~\eqref{cota-expectation-M(y)} we have\begin{equation*}
\E[\nu_{\theta}\otimes \nu_{\theta}(D(n))] \leq \E[\sum_{\substack{\bar{y}_1, \dots, \bar{y}_n}} M_{\theta}(\bar{y}_1 \dots \bar{y}_n) \prod_{i=1}^n 2w(\bar{y}_i)] = \sum_{\substack{\bar{y}_1, \dots, \bar{y}_n}} \prod_{i=1}^n 2C_{\rho}^{|\bar{y}_i|}w(\bar{y}_i)\,.
\end{equation*} 
To estimate the last sum we need to consider all possibilities for which $\bar{y}_n$ shares a site with $\bar{y}_{n-1}$, and after this, all possibilities such that $\bar{y}_{n-1}$ shares a site with $\bar{y}_{n-2}$ and so on. Using that $C_{\rho}\varphi(\alpha)<r_0<1$ we compute it to obtain \begin{equation*}
\E[\nu_{\theta}\otimes \nu_{\theta}(D(n))] \leq 2^n \Big[\sum_{m\ge 2} (C_{\rho} \varphi(\alpha))^m \Big] \Big[\sum_{m\ge 2} m(C_{\rho} \varphi(\alpha))^m \Big]^{n-1}.
\end{equation*} 
The choice of $r_0$ implies that $\sum_{m\ge 2} m(C_{\rho} \varphi(\alpha))^m<1/2$ and it follows that \begin{equation*}
\E[\sum\limits_{n\geq 1} \nu_{\theta}\otimes \nu_{\theta}(D(n))] = \sum\limits_{n\geq 1} \E [\nu_{\theta}\otimes \nu_{\theta} (D(n))] < \infty\,.
\end{equation*} Finally, for almost every realization of $\theta$ we have $\nu_{\theta}\otimes \nu_{\theta}(D(n)) \to 0$ when $n\to +\infty$. 
\end{proof}

\begin{lema} \label{nu-times-nu(A)=1} Let $\rho$ and $\alpha$ as in Lemma \ref{no-percolacion-nu-times-nu}. Recall that $A_n$ be the event that exists a separating set $\Delta$ that contains $\Lambda_n$ and $A = \cap_{n\ge 1} A_n$. 

Let $\xi$, $\xi'$ be finite cycle permutations and $\Lambda\Subset \Z^d$. Then for almost every realization of $\theta$ we have $\nu_{\theta,\Lambda}^{\xi} \otimes\nu_{\theta,\Lambda}^{\xi'}(A)=1$.
\end{lema}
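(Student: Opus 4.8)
The plan is to deduce $\nu_{\theta,\Lambda}^{\xi}\otimes\nu_{\theta,\Lambda}^{\xi'}(A)=1$ from the absence of an infinite open path of non-trivial cycles, established in Lemma~\ref{no-percolacion-nu-times-nu}. The bridge between the two is that a separating set can be built precisely when the spatial clusters of non-trivial open cycles are all finite. First I would fix a realization of $\theta$ for which the conclusion of Lemma~\ref{no-percolacion-nu-times-nu} holds (a full-measure set), and fix a pair $(\eta,\eta')$ in the event that no infinite open path exists. Trivial cycles occupy a single site and hence never obstruct separation along the $\Z^d$ geometry, so they may be ignored; the relevant object is the cluster $\mathcal{C}(x_0)$ in the neighbor graph $(\Gamma_\theta,\Join)$ consisting of all non-trivial open cycles reachable from a non-trivial open cycle through $x_0$. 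Since there is no infinite open path, every such cluster is finite, for every starting site $x_0$ with $\theta(x_0)\neq 0$.

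The key step is to convert finiteness of all open clusters into the existence, for each $n$, of a finite separating set $\Delta\supset\Lambda_n$. I would define $\Delta$ as the union of $\Lambda_n$ with the spatial supports $\{[\g]\}$ of all non-trivial open cycles lying in clusters that intersect $\Lambda_n$, closed up so that any open cycle touching $\Delta$ is wholly contained in $\Delta$. Concretely, let $\Delta_n$ be the set of sites visited by the (finitely many) non-trivial open cycles whose cluster meets $\Lambda_n$, together with $\Lambda_n$ itself. Because only finitely many open cycles meet $\Lambda_n$ (each such cycle lies in a finite cluster, and only finitely many clusters can intersect the finite box $\Lambda_n$, as each visited site supports finitely many open cycles under $\nu_{\theta,\Lambda}^{\xi}\otimes\nu_{\theta,\Lambda}^{\xi'}$), the set $\Delta_n$ is finite. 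By construction any open non-trivial cycle $\g$ with $\{\g\}\cap\Delta_n\neq\emptyset$ belongs to a cluster meeting $\Lambda_n$, hence $\{\g\}\subset\Delta_n$; and any open cycle disjoint from $\Delta_n$ automatically has $\{\g\}\subset\Delta_n^c$. Thus $\Delta_n$ separates $(\eta,\eta')$ and contains $\Lambda_n$, placing $(\eta,\eta')$ in $A_n$. As this holds for every $n$, the pair lies in $A=\cap_n A_n$.

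The remaining point is a measure-theoretic one: I must check that the almost-sure statement over $\theta$ and the almost-sure statement over $(\eta,\eta')$ combine correctly. Since the event ``no infinite open path'' has $\nu_{\theta,\Lambda}^{\xi}\otimes\nu_{\theta,\Lambda}^{\xi'}$-probability $1$ for $\P$-almost every $\theta$ by Lemma~\ref{no-percolacion-nu-times-nu}, and the inclusion argument above shows this event is contained in $A$, we get $\nu_{\theta,\Lambda}^{\xi}\otimes\nu_{\theta,\Lambda}^{\xi'}(A)\geq\nu_{\theta,\Lambda}^{\xi}\otimes\nu_{\theta,\Lambda}^{\xi'}(\text{no infinite open path})=1$ for $\P$-almost every $\theta$. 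One subtlety worth handling carefully is the reduction from general $\xi,\xi'$ and the measures $\nu_{\theta,\Lambda}^{\xi}\otimes\nu_{\theta,\Lambda}^{\xi'}$ to the identity-boundary product $\nu_\theta\otimes\nu_\theta$: as noted in the proof of Lemma~\ref{no-percolacion-nu-times-nu}, the two differ only through the finitely many cycles in $B(\xi,\Lambda)\cup B(\xi',\Lambda)$, so adding finitely many fixed open cycles can enlarge each cluster only by a bounded amount and cannot create an infinite open path.

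The main obstacle I expect is the step showing that only finitely many open cycles intersect the finite box $\Lambda_n$, since a priori a single site carries a Poisson-distributed number of copies of each of uncountably—no, countably—many cycle types. The resolution is that under $\nu_{\theta,\Lambda}^{\xi}\otimes\nu_{\theta,\Lambda}^{\xi'}$ the total occupation at any fixed site is almost surely finite (the expected number of open cycles through a site is controlled by $\sum_{\g\ni x}2w(\g)$, which is finite under the hypothesis $C_\rho\varphi(\alpha)<r_0$ by the weight summation in~\eqref{suma-pesos-largo-m} and the bound~\eqref{cota-expectation-M(y)}), so only finitely many cycles are open at each of the finitely many sites of $\Lambda_n$, and the finite-cluster property then propagates this finiteness outward to all of $\Delta_n$.
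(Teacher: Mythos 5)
Your proof is correct, but it runs in the opposite direction from the paper's. You prove the implication ``no infinite open cluster $\Rightarrow$ arbitrarily large separating sets'' directly, by taking $\Delta_n$ to be $\Lambda_n$ together with the supports of all non-trivial open cycles whose cluster meets $\Lambda_n$; this forces you to establish local finiteness (almost surely each site carries only finitely many open cycles, which you correctly extract from $\E\big[\sum_{\g\ni x}w(\g)\big]\le\sum_{m\ge 2} C_\rho^m\varphi(\alpha)^m<\infty$) so that the clusters are finite and $\Delta_n$ is a finite separating set containing $\Lambda_n$. The paper instead argues by contradiction through a maximality device: it defines $\Delta_0$ as the maximal separating set containing the origin (separating sets are closed under unions), observes that $A$ coincides with $\{\Delta_0=\Z^d\}$, and, when some occupied site lies outside $\Delta_0$, inductively extracts an infinite family of distinct open cycles contained in $\Delta_0^c$, each neighboring an earlier one, contradicting Lemma~\ref{no-percolacion-nu-times-nu}; no cluster construction or counting of cycles per site is needed there. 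What your route buys is an explicit separating set --- the cluster closure of $\Lambda_n$ --- which is essentially the object invoked later as $J_\La(\Delta)$ in the uniqueness argument of Lemma~\ref{unicidad-discreto}; what the paper's route buys is brevity. Two further remarks. First, both arguments must pass between ``infinite connected component of open cycles'' and ``infinite open path'' (the event controlled by Lemma~\ref{no-percolacion-nu-times-nu}), and this step requires exactly the local finiteness you prove, via a K\"onig-type argument; the paper uses this silently, so your write-up is, if anything, more complete on that point. Second, your reduction from $\nu_{\theta,\Lambda}^{\xi}\otimes\nu_{\theta,\Lambda}^{\xi'}$ to $\nu_{\theta}\otimes\nu_{\theta}$ is redundant: Lemma~\ref{no-percolacion-nu-times-nu} is already stated for the boundary-conditioned measures (the reduction is carried out inside its proof), so you may cite it directly.
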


\begin{proof}
Define $\Delta_0(\eta,\eta')$ as \[\Delta_0(\eta,\eta')= \sup \{\Delta \Subset \Z^d \colon 0 \in \Delta,\, \Delta \text{ separates } (\eta,\eta')\}\,.\] Such $\Delta_0$ exists because the separating set property is closed by finite unions. 

For the event $\{\Delta_0 = \Z^d\}$ we understand that any site with non-zero multiplicity is in $\Delta_0$. Note that events $A$ and $\{\Delta_0 = \Z^d\}$ are equivalent. In fact, if $A$ does not hold there exists $n$ such that $\Delta$ does not separate $(\eta, \eta')$ for all $\Delta \supset \Lambda_n$. So, $\Delta_0$ cannot contain $\Lambda_n$. Reciprocally, suppose that $x \notin \Delta_0$ with $\theta(x)\neq 0$ and the event $A$ holds. So, there exists a finite set $\Delta$ such that $\Delta$ separates $(\eta, \eta')$ and $\Delta \supset \Lambda_n$ where $n$ is fixed such that $x\in \La_n$. Then $\Delta_0\cup \Delta$ is a separating set that contradicts the maximal assumption of $\Delta_0$.

Assume that exists $x\in\Delta_0^c$ with $\theta(x)\neq 0$. In such case, there exists $s\in \Omega_{\theta}$ with $X(s)=x$ such that $(\eta(s),\eta'(s)) \neq (s,s)$. Otherwise, $\Delta_0\cup \{x\}$ contains $0$ and it is a separating set for $(\eta,\eta')$ larger than $\Delta_0$. Call $\g_1$ to the cycle from $\eta$ or $\eta'$ such that $\g_1(s)\neq s$. Clearly $\g_1$ is open and as $\Delta_0$ is a separating set we have $\{\g_1\}\subset \Delta_0^c$.

Consider $\Delta_1=\Delta_0 \cup \{\g_1\}$. It cannot be a separating set for $(\eta,\eta')$ by the maximal assumption of $\Delta_0$. So, there exists $\g_2$ in $\eta$ or $\eta'$ such that $\Delta_1 \cap \{\g_2\} \neq \emptyset$ and $\Delta_1^c \cap \{\g_2\}\neq \emptyset$. From the second intersection, we deduce $\Delta_0^c \cap \{\g_2\} \neq \emptyset$ but as $\Delta_0$ is a separating set we have $\{\g_2\}\subset \Delta_0^c$. We have also $\{\g_2\}\cap\{\g_1\}^c\neq \emptyset$, so $\g_2\neq\g_1$. From the first intersection we obtain that $\g_2\Join \g_1$, because $\{\g_2\}\cap\{\g_1\}\neq \emptyset$ and $\g_2\neq\g_1$. Note that $\g_2$ is open.

Now, suppose that there are $n$ different open cycles $\g_1, \dots, \g_n$ such that $\{\g_i\}\subset \Delta_0^c$ for all $i=1,\dots,n$ and for each $\g_i$ there exists $j\in \{1,\dots, i-1\}$ such that $\g_j\Join \g_i$. Denote by $\Delta_n$ the set $\Delta_0 \cup \left(\cup_{i=1}^n \{\bar{\g}_i\}\right)$. Since $\Delta_n$ cannot separate $(\eta,\eta')$ there exists a cycle $\g_{n+1}$ in $\eta$ or $\eta'$ (so, $\g_{n+1}$ is open) such that $\Delta_n \cap \{\g_{n+1}\} \neq \emptyset$ and $\Delta_n^c \cap \{\g_{n+1}\} \neq \emptyset$. As $\Delta_0$ is a separating set, the second condition implies that $\{\g_{n+1}\} \subset \Delta_0^c$ and $\g_{n+1}\neq \g_{j}$ for all $j=1,\dots,n$. Then the condition $\Delta_n \cap \{\g_{n+1}\} \neq \emptyset$ says that $\g_{n+1} \Join \g_j$ for some $j=1,\dots, n$.

Thus, there is a sequence of different open cycles $\{\g_i\}_{i\in \mathbb{N}}$ such that each $\g_n$ is the neighbor of some $\g_j$ with $j<n$. Then all cycles from the sequence are in the same connected component, so, it is infinite. Hence, we have proved that $A^c \subset \{\text{exists an infinite open path of cycles containing }x\}\,,$ and by Lemma~\eqref{no-percolacion-nu-times-nu} it follows that $\nu_{\theta,\La}^{\xi}\otimes \nu_{\theta,\La}^{\xi'}(A^c)=0$ for almost every realization of $\theta$.
\end{proof}

\begin{lema} \label{mu-times-mu(A)=1} Let $\rho$ and $\alpha$ as in  Lemma~\ref{no-percolacion-nu-times-nu}. Let $\mu$ and $\mu'$ be Gibbs measures that concentrate on finite cycle permutations. Then $\mu\otimes \mu'(A)=1$.
\end{lema}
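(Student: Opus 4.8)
The plan is to combine three ingredients that are already in place: the stochastic domination of the specifications by the free measures (Lemma~\ref{dominacion-de-especificaciones}), the fact that $A$ is a \emph{decreasing} event, and the non-percolation statement of Lemma~\ref{nu-times-nu(A)=1}; and then to transfer the conclusion from the specifications to $\mu\otimes\mu'$ via the DLR consistency equations.

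First I would observe that, since the coupling constructed in Lemma~\ref{dominacion-de-especificaciones} can be run independently in each coordinate, the product specification $G_{\theta,\Lambda}^{\xi_1}\otimes G_{\theta,\Lambda}^{\xi_2}$ is stochastically dominated by $\nu_{\theta,\Lambda}^{\xi_1}\otimes\nu_{\theta,\Lambda}^{\xi_2}$ in the product partial order on $\mathbb{N}_0^{\Gamma_{\theta}}\times\mathbb{N}_0^{\Gamma_{\theta}}$, for every pair of finite cycle permutations $\xi_1,\xi_2$ and every $\Lambda\Subset\Z^d$. Because $A$ is decreasing, its complement $A^c$ is increasing, and stochastic domination yields $(G_{\theta,\Lambda}^{\xi_1}\otimes G_{\theta,\Lambda}^{\xi_2})(A^c)\le(\nu_{\theta,\Lambda}^{\xi_1}\otimes\nu_{\theta,\Lambda}^{\xi_2})(A^c)$. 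Lemma~\ref{nu-times-nu(A)=1} shows that the right-hand side vanishes for almost every $\theta$, so for almost every $\theta$ we conclude $(G_{\theta,\Lambda}^{\xi_1}\otimes G_{\theta,\Lambda}^{\xi_2})(A)=1$.

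It then remains to push this identity through to $\mu\otimes\mu'$. Since $\mu$ and $\mu'$ are Gibbs measures, the product $\mu\otimes\mu'$ is Gibbs for the product specifications, so for any $\Lambda\Subset\Z^d$ the DLR equation reads $\mu\otimes\mu'(A)=\int (G_{\theta,\Lambda}^{\xi_1}\otimes G_{\theta,\Lambda}^{\xi_2})(A)\,\dd\mu(\xi_1)\,\dd\mu'(\xi_2)$. As $\mu$ and $\mu'$ concentrate on finite cycle permutations, the integration variables $\xi_1,\xi_2$ are finite cycle permutations for $(\mu\otimes\mu')$-almost every pair, so by the previous paragraph the integrand equals $1$ almost everywhere and the integral is $1$.

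The one delicate point, which I expect to be the main obstacle, is guaranteeing that the measure-zero set of $\theta$ on which the non-percolation estimate may fail can be chosen \emph{independently} of the boundary conditions, so that $(G_{\theta,\Lambda}^{\xi_1}\otimes G_{\theta,\Lambda}^{\xi_2})(A)=1$ holds simultaneously for all finite cycle $\xi_1,\xi_2$ and all $\Lambda$ before integrating. This is ensured by the structure of the proof of Lemma~\ref{nu-times-nu(A)=1}, which reduces everything to the reference measure $\nu_{\theta}\otimes\nu_{\theta}$ through a coupling that differs from $\nu_{\theta,\Lambda}^{\xi_1}\otimes\nu_{\theta,\Lambda}^{\xi_2}$ only by finitely many cycles; the existence of an infinite open path is therefore insensitive to the boundary data. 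Everything else is a routine application of stochastic domination against a decreasing event together with the DLR consistency relation.
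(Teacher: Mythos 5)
Your proposal is correct and follows essentially the same route as the paper: stochastic domination of the product specifications by $\nu_{\theta,\Lambda}^{\xi}\otimes\nu_{\theta,\Lambda}^{\xi'}$, monotonicity of the event, the DLR equation for $\mu\otimes\mu'$, and Lemma~\ref{nu-times-nu(A)=1}, including the correct resolution of the boundary-condition-independent null set via the finitely-many-differences coupling. The only cosmetic difference is that you apply domination directly to the decreasing event $A$, whereas the paper applies it to each $A_n$ and then lets $n\to\infty$ using $A_n\downarrow A$; both are valid.
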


\begin{proof} As $A_{n+1}\subset A_{n}$ it is sufficient to show that $\lim_{n\to \infty} \mu\otimes \mu'(A_n)=1$. Using that $A_n$ is a decreasing event with the definition of Gibbs measures, we obtain for $\Lambda\Subset \Z^d$: 
\begin{equation*}
\mu\otimes \mu'(A_n) = \int G_{\theta, \Lambda}^{\xi} \otimes G_{\theta, \Lambda}^{\xi'}(A_n) \dd \mu \otimes \mu'(\xi,\xi') \geq \int \nu_{\theta, \Lambda}^{\xi} \otimes \nu_{\theta, \Lambda}^{\xi'}(A_n) \dd \mu \otimes \mu'(\xi,\xi') \,.
\end{equation*} To complete the proof, we take limit as $n$ tends to $\infty$ in the last term and use the Lemma~\eqref{nu-times-nu(A)=1}.
\end{proof}

\begin{lema} \label{unicidad-discreto} Consider $\rho\in(0,1/2)$ and $\alpha>0$ such that $C_{\rho}\varphi(\alpha)<r_0$, where $r_0$ is the constant defined in~\eqref{r_0}. If $\mu$ and $\mu'$ are Gibbs measures supported on finite cycle permutations, then $\mu=\mu'$.
\end{lema}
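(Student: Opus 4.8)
The plan is to prove that $\mu$ and $\mu'$ assign the same value to every bounded local function, which suffices since such functions determine a probability measure on $(S_\theta,\cF_\theta)$. Fix $k$ and a bounded function $g$ that is measurable with respect to the restriction of the permutation to $\Lambda_k$, i.e.\ depending only on $\s(s)$ for $X(s)\in\Lambda_k$; it is enough to show $\int g\,\dd\mu=\int g\,\dd\mu'$. I would work under the coupling $\mu\otimes\mu'$ on $S_\theta^F\times S_\theta^F$, which is Gibbs for the product specifications and, by Lemma~\ref{mu-times-mu(A)=1}, charges the event $A=\cap_{n\ge1}A_n$ with full mass. Since the $A_n$ decrease to $A$, this gives $\mu\otimes\mu'(A_n^c)\to0$ as $n\to\infty$.

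The heart of the argument is a recoupling on $A_n$. Suppose $(\eta,\eta')\in A_n$ with $n\ge k$, so there is a finite separating set $\Delta\supseteq\Lambda_n$. Because $\Delta$ separates $(\eta,\eta')$, no cycle of $\eta$ or of $\eta'$ crosses $\partial\Delta$, so $B(\eta,\Delta)=B(\eta',\Delta)=\emptyset$ and the finite-volume specification carries identity boundary conditions, reducing to $G_{\theta,\Delta}^{\text{id}}$. By the DLR equations for $\mu$ and for $\mu'$, the conditional law of the interior $\eta|_\Delta$ (respectively $\eta'|_\Delta$) given the exterior is then $G_{\theta,\Delta}^{\text{id}}$ for \emph{both} coordinates, a law that depends neither on the exterior nor on which Gibbs measure we started from. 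Hence I can resample both interiors from this common law and couple them to coincide, $\eta|_\Delta=\eta'|_\Delta$. Since $\Delta$ separates and $\Lambda_k\subseteq\Lambda_n\subseteq\Delta$, every cycle meeting $\Lambda_k$ lies entirely inside $\Delta$; agreement of the two configurations on $\Delta$ therefore forces $\s(s)=\s'(s)$ for all $s$ with $X(s)\in\Lambda_k$, and so $g(\eta)=g(\eta')$.

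With this coupling, $\left|\int g\,\dd\mu-\int g\,\dd\mu'\right|=\big|\E_{\mu\otimes\mu'}[g(\eta)-g(\eta')]\big|\le 2\|g\|_\infty\,\mu\otimes\mu'(A_n^c)\to0$, whence $\int g\,\dd\mu=\int g\,\dd\mu'$. As $g$ and $k$ were arbitrary, $\mu=\mu'$. Finally, the weak limit of the identity-boundary specifications produced in the existence step is itself a finite-cycle Gibbs measure by Lemma~\ref{lim-debil-es-gibbs}, so the common value of $\mu$ and $\mu'$ coincides with that limit; in particular the subsequential limit does not depend on the chosen subsequence.

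The main obstacle is making the recoupling rigorous: the separating set $\Delta$ is random and depends on $(\eta,\eta')$, whereas the DLR equation only yields the conditional law of the interior for a \emph{deterministic} volume, and the event $\{\Delta\text{ separates }(\eta,\eta')\}$ is not measurable with respect to the exterior alone. I would resolve this by fixing a deterministic $\Delta\supseteq\Lambda_n$, applying the DLR equation for $\mu$ and $\mu'$ to condition each interior on its exterior, and observing that once the interior is drawn from $G_{\theta,\Delta}^{\text{id}}$ all interior cycles automatically stay inside $\Delta$; the separating property can then fail only through a cycle of the conditioned exterior reaching into $\Delta$, a contingency that is controlled by exhausting over deterministic $\Delta\supseteq\Lambda_n$ and using Lemma~\ref{mu-times-mu(A)=1} to absorb the remainder into $\mu\otimes\mu'(A_n^c)$. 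Assembling these pieces carefully is the technical core of the proof.
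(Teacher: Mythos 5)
Your overall skeleton matches the paper's: pass to the product measure $\mu\otimes\mu'$, invoke Lemma~\ref{mu-times-mu(A)=1} to get separating sets, and apply the DLR equation at the separating volume, where both interiors are governed by the identity-boundary specification. But the step you yourself flag as the ``technical core'' contains a genuine gap, and the fix you sketch does not work. The problem is not merely that the recoupling needs care: your proposed resolution --- fix a \emph{deterministic} $\Delta\supseteq\Lambda_n$, couple the two interiors to coincide when no exterior cycle reaches into $\Delta$, and ``absorb the remainder into $\mu\otimes\mu'(A_n^c)$'' --- fails because the remainder for a deterministic $\Delta$ is the event that \emph{some cycle crosses $\partial\Delta$}, and this event is not contained in $A_n^c$, nor is its probability small. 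Lemma~\ref{mu-times-mu(A)=1} only produces a \emph{random} separating set, chosen adaptively to wiggle around the cycles; it does not imply that any deterministic finite volume separates with probability close to $1$. On the contrary, since the specifications dominate configurations with, e.g., $2$-cycles straddling any prescribed boundary site with probability bounded below (cf.\ the domination by $\nu_\theta$ in Lemma~\ref{dominacion-de-especificaciones}), for every deterministic finite $\Delta\supseteq\Lambda_n$ the probability that $\Delta$ fails to separate $(\eta,\eta')$ is bounded away from $0$, and in fact grows with $|\partial\Delta|$. So $\sup_{\Delta\supseteq\Lambda_n}\mu\otimes\mu'(\Delta\text{ separates})$ is \emph{not} close to $1$, and no exhaustion over deterministic volumes can make your error term vanish. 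Conflating ``a.s.\ there exists a finite separating set containing $\Lambda_n$'' with ``some deterministic $\Delta\supseteq\Lambda_n$ separates with high probability'' is the fatal step.

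The idea you are missing is how the paper exploits the random separating set without ever modifying the coupling. It decomposes $A$ into the \emph{disjoint} events $J_{\Lambda}(\Delta)$ that $\Delta$ is the \emph{first} separating set containing $\Lambda$, applies the DLR equation for $\mu\otimes\mu'$ at each deterministic volume $\Delta$, and then proves the symmetry identity~\eqref{igualdad-especificaciones-producto}: for fixed boundary conditions $(\xi,\xi')$, either $\Delta$ fails to separate $(\xi,\xi')$ and both sides vanish, or the boundary acts like the identity, in which case $G_{\theta,\Delta}^{\text{id}}\otimes G_{\theta,\Delta}^{\text{id}}$ and the event $J_{\Lambda}(\Delta)$ are both invariant under the swap $(\s,\s')\mapsto(\s',\s)$. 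Exchangeability then gives $G^{\xi}_{\theta,\Delta}\otimes G^{\xi'}_{\theta,\Delta}\big((B\times S_\theta^F)\cap J_\Lambda(\Delta)\big)=G^{\xi}_{\theta,\Delta}\otimes G^{\xi'}_{\theta,\Delta}\big((S_\theta^F\times B)\cap J_\Lambda(\Delta)\big)$ directly --- no resampling, hence no marginal-preservation issue, even though $J_\Lambda(\Delta)$ is not exterior-measurable (it is only swap-symmetric, which suffices). Integrating against $\mu\otimes\mu'$ and summing over $\Delta$ yields $\mu(B)=\mu'(B)$. In short: replace your diagonal recoupling by this swap-symmetry argument on the first-separating-set events; as written, your proof cannot be completed.
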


\begin{proof} It is sufficient to prove that $\mu(B)=\mu'(B)$ for a local event $B$. 

Let $J_{\La}(\Delta)$ be the event that $\Delta\Subset \Z^d$ is the first separating set that contains $\La$. The existence of $\Delta$ is guaranteed by~\eqref{mu-times-mu(A)=1} and so, $\sum_{\Delta\supset \La, \, \Delta \Subset \Z^d} \mu \otimes \mu'( J_{\La}(\Delta))=1\,.$

Now, suppose that for any finite cycle boundary conditions $\xi$ and $\xi'$, we have \begin{equation} \label{igualdad-especificaciones-producto}
G_{\theta,\Delta}^{\xi}\otimes G_{\theta,\Delta}^{\xi'}\left((B\times S_{\theta}^F)\cap J_{\La}(\Delta)\right) = G_{\theta,\Delta}^{\xi}\otimes G_{\theta,\Delta}^{\xi}\left((S_{\theta}^F \times B)\cap J_{\La}(\Delta) \right)\,.
\end{equation} Then, integrate with respect to $\mu\otimes \mu'$ and use that $\mu$ and $\mu'$ are supported on finite cycle permutations to obtain \[\mu \otimes \mu' \left((B\times S_{\theta}^F)\cap J_{\La}(\Delta)\right) = \mu \otimes \mu'\left((S_{\theta}^F \times B)\cap J_{\La}(\Delta)\right)\,,\] and summing over all choices of $\Delta$, we show $\mu(B) = \mu'(B)$.

So, it remains to prove that (\ref{igualdad-especificaciones-producto}). Observe that if $\Delta$ does not separate $(\xi,\xi')$, both sides of (\ref{igualdad-especificaciones-producto}) are 0. If $\Delta$ separates, the boundary conditions $(\xi,\xi')$ have the same effect as the identity boundary conditions. Hence, we can replace $\xi$ and $\xi'$ by $\text{id}$. Since the event $J_{\La}(\Delta)$ and $G_{\theta,\Delta}^{\text{id}}\otimes G_{\theta,\Delta}^{\text{id}}$ are invariant under the map $(\s,\s') \mapsto (\s',\s)$ the equation (\ref{igualdad-especificaciones-producto}) holds.
\end{proof}


\section{Existence and uniqueness on the continuum} \label{section-continuum}

In this section we study the existence of Gibbs measure when the set of points is a realization of a homogeneous Poisson point process on $\R^d$ with low intensity. The Hamiltonian $H$ is given by the quadratic potential as in~\eqref{Hamiltonian-intro-general}. We understand the finite volume $\Lambda$ as a compact subset of $\R^d$ and we write $\Lambda\Subset \R^d$ for it. The thermodynamic formalism has analogous definitions that in the random lattice case. 

Let $\Omega \subset \R^d$ be the realization of a Poisson point process with density $\rho$. The notation is the same that for previous sections, we write $\Omega$ instead $\Omega_{\theta}$ or $\theta$. So, $S_{\Omega}$ is the permutation space, $S_{\Omega}^F$ is the set of finite cycle permutation and $\Gamma_{\Omega}$ is the space of finite cycles. The support of a cycle has the same definition as before. However, the notion of ordered support does not make sense in the continuous setting, since each jump contributes non-zero to the Hamiltonian.

Section~\eqref{section-domination} works as in the previous case because we did not use anything related to the environment. So, the free process and the loss network of cycles have the same definitions and properties that in the discrete case. In particular, the specification at finite volume $\Lambda$ corresponding to a finite cycle boundary condition is stochastically dominated by the corresponding invariant measure of the free process. So, to show the existence we will prove tightness of the family of specifications $\{G_{\Omega,\Lambda}^{\text{id}}\}_{\Lambda\Subset \R^d}$. To prove uniqueness we will apply the existence of separating sets as in Lemma~\eqref{mu-times-mu(A)=1}. 

We want to construct a coupling between the free process in the continuum setup with the free process of a certain discrete model on $\Z^d$ with Poisson multiplicities in such way that the first is dominated by the second. Then we are able to apply the results of previous sections. 

If $z=(z_1,\dots,z_d) \in \R^d$ we write $\floor{z}=(\floor{z_1},\dots,\floor{z_d})\in \Z^d$. Let $\Omega$ be a homogeneous Poisson process on $\R^d$ with intesity $\rho$. For $x\in \Z^d$, let $\theta(x)$ be the number of points in $\Omega \cap I_x$, where $I_x = x + [0,1)^d$. Then $\theta=\{\theta(x)\}_{x\in \Z^d}$ is an i.i.d. sequence of Poisson($\rho$) random variables. For each $x$ such that $\theta(x) \neq 0$ we tag points of $I_x$ from $1$ to $\theta(x)$ with some rule. For example, using the relative order of the distance to $x$. So, there is a bijection among $\Omega$ and $\Omega_{\theta}$, where $\Omega_{\theta}\subset \Z^d\times \mathbb{N}$ is the set associated to $\theta$, such that each $z\in\Omega$ is mapped to some $(\floor{z},i)$ with $i\in\{1,\dots,\theta(\floor{z})\}$. This bijection induces also a bijective map among the finite cycle spaces. We denote this bijection by $\Psi$. The following lemma summarizes these claims.

\begin{lema} The map $\Psi \colon S_{\Omega} \to S_{\theta}$ is a homeomorphism. Further, it induces a homeomorphism among $\mathbb{N}_0^{\Gamma_{\Omega}}$ and $\mathbb{N}_0^{\Gamma_{\theta}}$ by the relation $\eta \mapsto \varsigma$ where $\varsigma(\g)= \eta(\Psi^{-1}(\g))$ for all $\g\in \Gamma_{\theta}$.

\end{lema}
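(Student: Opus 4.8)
The plan is to establish the homeomorphism claim by unwinding the explicit construction of $\Psi$ and checking continuity in both directions against the metric $d$ on the permutation spaces. First I would make the tagging rule precise: for each $x\in\Z^d$ with $\theta(x)\neq 0$, the points of $\Omega\cap I_x$ are finite (since $I_x$ is bounded and $\Omega$ is locally finite almost surely), so ordering them by increasing distance to $x$ (breaking ties by, say, lexicographic order of coordinates, which occurs with probability zero anyway) gives a well-defined labelling $z\mapsto(\lfloor z\rfloor, i)$. This produces a bijection $\phi\colon\Omega\to\Omega_\theta$ of point sets. The induced map on permutations is then $\Psi(\sigma)=\phi\circ\sigma\circ\phi^{-1}$, which is manifestly a bijection $S_\Omega\to S_\theta$ with inverse $\Psi^{-1}(\sigma)=\phi^{-1}\circ\sigma\circ\phi$.

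The bijectivity being essentially formal, the substance is the topological statement. I would argue that $\Psi$ is a homeomorphism by showing it is continuous with continuous inverse for the metrics $d$ on $S_\Omega$ and $S_\theta$. The key observation is that $\phi$ respects spatial location up to the bounded shift within a unit cube: a point $z$ with $\|z\|$ large maps to a point $\phi(z)$ whose projection $X(\phi(z))=\lfloor z\rfloor$ satisfies $\|\lfloor z\rfloor\|\to\infty$ as $\|z\|\to\infty$, and conversely. Thus the two notions of ``the smallest-norm site where two configurations disagree'' are comparable: if $\sigma_n\to\sigma$ in $S_\Omega$, meaning $\sigma_n(z)=\sigma(z)$ for all $z$ of norm below a threshold that grows, then $\Psi(\sigma_n)$ and $\Psi(\sigma)$ agree on all points projecting to a correspondingly growing set of sites, so $d(\Psi(\sigma_n),\Psi(\sigma))\to 0$. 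The same argument run through $\phi^{-1}$ gives continuity of $\Psi^{-1}$. I would package this as a single lemma: there exist increasing functions comparing the cutoff radii, which suffices because both metrics are of the form $2^{-(\text{cutoff})}$.

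The induced map on gas-of-cycles spaces is then immediate. A finite cycle $\gamma\in\Gamma_\Omega$ is a finite sequence of points, and $\phi$ carries it to a finite cycle $\Psi(\gamma)\in\Gamma_\theta$ with the same combinatorial structure; since $\phi$ is a bijection preserving finiteness of support, this gives a bijection $\Gamma_\Omega\to\Gamma_\theta$, and hence the stated bijection $\mathbb{N}_0^{\Gamma_\Omega}\to\mathbb{N}_0^{\Gamma_\theta}$, $\eta\mapsto\varsigma$ with $\varsigma(\gamma)=\eta(\Psi^{-1}(\gamma))$. Continuity in both directions transfers from the permutation level since the product topology on $\mathbb{N}_0^{\Gamma}$ is generated by the occupation numbers of individual cycles, and $\Psi$ matches cycles bijectively.

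The main obstacle I anticipate is not any single deep estimate but rather the bookkeeping needed to handle the shift between continuous coordinates and their integer floors cleanly: I must ensure the tagging rule is measurable and almost surely well-defined (ties have probability zero under the Poisson law), and I must verify that the comparison of cutoff radii is genuinely two-sided so that neither $\Psi$ nor $\Psi^{-1}$ can collapse distances in a way that breaks the homeomorphism property. Once the radius comparison is stated carefully, the continuity arguments are routine, and the extension to $\mathbb{N}_0^{\Gamma}$ follows with no further analytic input.
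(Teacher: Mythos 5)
Your proposal is correct and follows precisely the route the paper intends: the paper states this lemma without proof, as a summary of the tagging construction described in the paragraph preceding it, and your argument simply makes that construction rigorous. The two points you supply explicitly --- two-sided continuity via the bounded displacement $\|z-\floor{z}\|\le\sqrt{d}$ comparing the cutoff radii in the two metrics, and the observation that a bijective relabelling of cycles induces a homeomorphism of the product spaces $\mathbb{N}_0^{\Gamma_{\Omega}}$ and $\mathbb{N}_0^{\Gamma_{\theta}}$ --- are exactly the routine details the paper leaves to the reader.
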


The following lemma tells us what potential we can choose to compare the discrete and continuum models.

\begin{lema} For $x, z\in\R^d$ we have: $\|x-z\|^2 \geq V(\floor{x}-\floor{z})$, where $V$ is defined by $V(x)=\max\{\|x\|^2-2\sqrt{d}\|x\|,0\}$.
\end{lema}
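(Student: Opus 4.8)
The plan is to reduce the statement to a single Cauchy--Schwarz estimate after isolating the fractional parts of the coordinates. I would set $w := \floor{x}-\floor{z}\in\Z^d$ and write $x-z = w+\vep$ with $\vep := (x-z)-w$. The key elementary observation is that $\vep$ has small coordinates: for each $i$, both fractional parts $x_i-\floor{x_i}$ and $z_i-\floor{z_i}$ lie in $[0,1)$, so their difference $\vep_i$ lies in $(-1,1)$; hence $\|\vep\|^2=\sum_{i=1}^d \vep_i^2 \le d$, i.e.\ $\|\vep\|\le\sqrt d$. This bound is the sole source of the constant $2\sqrt d$ appearing in $V$.

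The second step is to expand the square and drop the nonnegative term $\|\vep\|^2$:
\[
\|x-z\|^2 = \|w+\vep\|^2 = \|w\|^2 + 2\langle w,\vep\rangle + \|\vep\|^2 \ge \|w\|^2 + 2\langle w,\vep\rangle.
\]
Cauchy--Schwarz together with $\|\vep\|\le\sqrt d$ gives $\langle w,\vep\rangle \ge -\|w\|\,\|\vep\| \ge -\sqrt d\,\|w\|$, so that
\[
\|x-z\|^2 \ge \|w\|^2 - 2\sqrt d\,\|w\| = \|\floor{x}-\floor{z}\|^2 - 2\sqrt d\,\|\floor{x}-\floor{z}\|.
\]

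Finally I would combine this with the trivial inequality $\|x-z\|^2\ge 0$. Since the left-hand side dominates both $\|w\|^2-2\sqrt d\,\|w\|$ and $0$, it dominates their maximum, which is exactly $V(\floor{x}-\floor{z})$ by the definition of $V$ in~\eqref{2ndpotential}. I do not anticipate a genuine obstacle here; the only point deserving care is the passage from the coordinatewise bound $|\vep_i|<1$ to the Euclidean bound $\|\vep\|\le\sqrt d$, and it is precisely this step that dictates the particular form of $V$. This is exactly the inequality needed to guarantee that the continuum energy dominates the discrete energy under the homeomorphism $\Psi$, so that the existence and uniqueness results of the earlier sections transfer to the continuous setting.
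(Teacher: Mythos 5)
Your proof is correct and follows essentially the same route as the paper: decompose $x-z$ into the integer difference $\floor{x}-\floor{z}$ plus the difference of fractional parts, bound the latter by $\sqrt d$, and apply Cauchy--Schwarz after dropping the nonnegative square term. The only (harmless) addition is that you spell out explicitly the trivial bound $\|x-z\|^2\ge 0$ needed to dominate the maximum defining $V$, which the paper leaves implicit.
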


\begin{proof} For $x\in \R^d$ write $x= \floor{x} + \tilde{x}$ with $\tilde{x}\in [0,1)^d$. So, using Cauchy-Schwarz we have
\[ \|x-z\|^2 \geq \|\floor{x} - \floor{z}\|^2 - 2 \|\floor{x} - \floor{z}\| \|\tilde{x} - \tilde{z}\| \geq \|\floor{x} - \floor{z}\|^2 - 2 \sqrt{d}\|\floor{x} - \floor{z}\|.\qedhere\] 
\end{proof}

For the rest of this section, $V$ will denote the potential defined in the previous Lemma and $H_V$ its associated Hamiltonian. Note that $H(\g)\geq H_V(\Psi(\g))$ for all $\g\in \Gamma_{\Omega}$, so, the respective weights satisfies $w(\g)\leq w_V(\Psi(\g))$ for all $\g \in \Gamma_{\Omega}$. 

Denote by $(\eta_t^o \colon t\in \R)$ and $(\varsigma_t^o \colon t\in \R)$ the stationary constructions of free processes for the continuum model with quadratic potential and for the discrete model related to $V$ respectively. By the relation among weights, we can give a coupling between processes such that $\Psi(\eta_t^o) \leq \varsigma_t^o$ for all $t$. Each free process is constructed as a function of a Poisson process in the cycle spaces, so, it is sufficient to couple both processes. Denote by $\mathcal{N}$ and $\mathcal{N}_V$ the Poisson processes corresponding to $(\eta_t^o \colon t\in \R)$ and $(\varsigma_t^o \colon t\in \R)$ respectively. It is sufficient to construct $\mathcal{N}$ as an independent thinning of $\mathcal{N}_V$ as follows: a mark $(\Psi(\g),t,s) \in\mathcal{N}_V$ induces a mark $(\g,t,s)$ in the process $\mathcal{N}$ with probability $w(\g)/w_V(\Psi(\g))$ independent of each other. 

Let $K\subset S_{\Omega}^F \subset \{0,1\}^{\Gamma_{\Omega}}$ be a decreasing event. Since the Lemma~\eqref{dominacion-de-especificaciones} holds for the continuum setting, the specifications are dominated by the Poisson measure $\nu_{\Omega}$. Using this fact with the coupling among both free processes we have that for all compact set $\Lambda \subset \R^d$: \begin{equation}\label{discreto-domina-continuo}
G_{\Omega,\Lambda}^{\text{id}} (K^c)\leq \nu_{\Omega} (K^c)\leq \nu_{\theta}(\Psi(K^c)).
\end{equation}

For the discrete model we define $\widehat{K}_f = \bigcap\limits_{x\in \Z^d} \widehat{K}_f(x) \subset \mathbb{N}_0^{\Gamma_{\theta}}$ where \[\widehat{K}_{f}(x)= \{\eta\in \mathbb{N}_0^{\Gamma_{\theta}} \colon \forall \,\g\in\eta \text{ such that } x\in \g \text{ we have } H_V(\g) \leq f(x)\} \,.\] It is the same set of the previous section but now is associated to $H_V$. The set $K_f= \widehat{K}_f \cap S_{\theta}^{F}$ is a decreasing event by Lemma~\eqref{lema-Kf-creciente} and if the density $\rho$ is good for $V$, $K_f$ is a non-empty compact set for almost every realization of $\theta$. For the rest of the section we assume that $\rho$ is good for $V$.

Note that $\Psi^{-1}(K_f)$ is also a decreasing event and a non-empty compact set. Hence, equation~\eqref{discreto-domina-continuo} implies that for almost every realization of $\Omega$ we have \[G_{\Omega,\Lambda}^{\text{id}} (\Psi^{-1}(K_f^c))\leq \nu_{\Omega} (\Psi^{-1}(K_f^c))\leq \nu_{\theta}(K_f^c)\,.\] So, given $\epsilon>0$, Lemma~\eqref{lema-cota-quenched} provides a function $f$ such that $\nu_{\theta}(K_f^c)<\epsilon$.

\begin{lema} Let $\rho \in (0,1/2)$ such that $\rho$ is good for $V$. Suppose that $\rho$ and $\alpha>0$ satisfy $C_{\rho}\varphi_V(\alpha/2)<1$, where $C_{\rho}= \frac{\rho e^{-\rho+\frac{1}{2}}}{1-2\rho}$ and $\varphi_V$ was defined in~\eqref{funcion-varphi-general}.

Then the family $\{ G_{\Omega,\Lambda}^{\text{id}} \}_{\Lambda \Subset \R^d}$ is tight and there exists a Gibbs measure $\mu$ that concentrates on finite cycle permutations.

\end{lema}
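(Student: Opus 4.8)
The plan is to establish tightness of the family $\{G_{\Omega,\Lambda}^{\text{id}}\}_{\Lambda\Subset\R^d}$ and then extract a Gibbs limit, transporting the discrete existence argument of \S\ref{section-existence} through the homeomorphism $\Psi$. For tightness, fix $\epsilon>0$. The discussion preceding the statement shows that, for almost every realization of $\Omega$, Lemma~\ref{lema-cota-quenched} applied to the discrete model governed by $H_V$ (using the hypothesis $C_\rho\varphi_V(\alpha/2)<1$) produces a function $f\colon\Z^d\to\NN$ with $\nu_\theta(K_f^c)<\epsilon$. The domination chain~\eqref{discreto-domina-continuo} then yields
\[
\sup_{\Lambda\Subset\R^d} G_{\Omega,\Lambda}^{\text{id}}\big(\Psi^{-1}(K_f^c)\big)\le \nu_\theta(K_f^c)<\epsilon .
\]
Since $\rho$ is good for $V$, the set $\Psi^{-1}(K_f)$ is a nonempty compact subset of $S_\Omega^F$, so the complement of a compact set carries uniformly small mass over all $\Lambda$, and the family is tight.

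Next I would extract the measure. By Prokhorov's theorem there is an increasing sequence $\Lambda_n\uparrow\R^d$ along which $G_{\Omega,\Lambda_n}^{\text{id}}$ converges weakly to some probability $\mu$. The continuum analogue of Lemma~\ref{lim-debil-es-gibbs}, a general consistency fact about specifications that uses nothing about the environment, shows that $\mu$ is a Gibbs measure compatible with the specifications.

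Finally I would verify concentration on finite cycle permutations. Fix $\epsilon>0$ and the associated compact set $\Psi^{-1}(K_f)\subset S_\Omega^F$. Since $G_{\Omega,\Lambda_n}^{\text{id}}\big(\Psi^{-1}(K_f)\big)\ge 1-\epsilon$ for every $n$ and $\Psi^{-1}(K_f)$ is closed, the portmanteau theorem gives $\mu\big(\Psi^{-1}(K_f)\big)\ge\limsup_n G_{\Omega,\Lambda_n}^{\text{id}}\big(\Psi^{-1}(K_f)\big)\ge 1-\epsilon$. As $\Psi^{-1}(K_f)\subset S_\Omega^F$, letting $\epsilon\downarrow 0$ yields $\mu(S_\Omega^F)=1$.

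The genuine obstacle is in fact already resolved in the material preceding the statement: the one nontrivial ingredient is the compactness of $\Psi^{-1}(K_f)$, which rests on the fact that, under the good-density hypothesis, every infinite cycle carries infinite $H_V$-energy. This is delicate in the continuum precisely because the comparison potential $V$ in~\eqref{2ndpotential} vanishes in a neighborhood of the origin, so finiteness of the energy alone does not forbid infinite cycles; the good-density condition rules out long-range Boolean percolation and thereby forces the cycles in $\Psi^{-1}(K_f)$ to be finite. With that in hand, the remaining steps are routine transports of the discrete existence proof of \S\ref{section-existence} across $\Psi$.
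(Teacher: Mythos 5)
Your proposal is correct and takes essentially the same approach as the paper: the paper's proof of this lemma is precisely the chain you describe --- the quenched estimate of Lemma~\ref{lema-cota-quenched} applied to the discrete model with potential $V$ under $C_\rho\varphi_V(\alpha/2)<1$, the domination chain~\eqref{discreto-domina-continuo}, and the compactness of $\Psi^{-1}(K_f)$, which rests on the good-density hypothesis exactly as you explain. Your Prokhorov extraction, the appeal to the continuum analogue of Lemma~\ref{lim-debil-es-gibbs}, and the portmanteau argument for $\mu(S_\Omega^F)=1$ merely spell out steps the paper leaves implicit.
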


Now, we want to establish the uniqueness among Gibbs measures that concentrate on finite cycle permutations. To prove it, we use the existence of arbitrary large separating sets for the discrete model with potential $V$. 

We say that a compact set $\Delta \subset \R^d$ is a separating set for $\eta\in \mathbb{N}_0^{\Gamma_{\Omega}}$ if $\partial \Delta \cap \Omega = \emptyset$ and for any $\g\in\eta$ we have $\{\g\}\subset \Delta$ or $\{\g\}\subset \Delta^c$. We say that $\Delta$ is a separating set for the pair $(\eta,\eta')$, if it is a separating set for $\eta$ and $\eta'$.

Given $z_1,\dots, z_n \in \Omega$, we can pick a compact set $\Lambda\subset \R^d$ such that $\partial \Lambda \cap \Omega = \emptyset$ and $\Lambda \cap \Omega = \{z_1,\dots, z_n\}$. There exists a lot of ways to choose $\Lambda$, but fix one of these. Now, given $\Delta \Subset \Z^d$ define $\Psi^{-1}(\Delta)$ as the fix previous compact set that contains $\cup_{x\in \Delta} (\Omega \cap I_x)$. Note that, if $\Delta \Subset \Z^d$ is a separating set for $\varsigma \in \mathbb{N}_0^{\Gamma_{\theta}}$, the compact set $\Psi^{-1}(\Delta) \subset \R^d$ is also a separating set for $\eta=\Psi^{-1}(\varsigma)\in \mathbb{N}_0^{\Gamma_{\Omega}}$. This fact can be extended to pair of configurations. 

Let $A$ be the event that exists a sequence of compact sets that increase to $\R^d$ and each set is a separating set for pairs $(\eta,\eta')\in \NN_0^{\Gamma_{\Omega}}\times\NN_0^{\Gamma_{\Omega}}$. By the coupling between continuum and discrete free processes and Lemma~\eqref{nu-times-nu(A)=1} about the existence or arbitrary large separating sets for the discrete model, one shows that for almost surely with respect to the environment $\nu_{\Omega, \Lambda}^{\xi} \otimes \nu_{\Omega, \Lambda}^{\xi'}(A)=1$ for any finite $\Lambda$ and any pair $(\xi, \xi')$ of finite cycle permutations in $\cS_{\Omega}$.

Lemma~\eqref{mu-times-mu(A)=1} also holds in the continuum context, since it only uses the definition of Gibbs measures and the domination of specifications by the corresponding free process. So, if $\mu$ and $\mu'$ are Gibbs measures in the continuous model that concentrate on the finite cycle permutations, we have \begin{equation*} \mu\otimes\mu'( \exists \{\Delta_j\}_{j\in \NN} \uparrow \R^d \text{ such that } \Delta_j \text{is a separating set}) = 1.
\end{equation*} Hence, the existence of an increasing sequence of separating sets in the continuum model is proved.

\begin{lema} Let $\rho \in (0,1/2)$ such that $\rho$ is good for $V$. Suppose that $\rho$ and $\alpha$ satisfy $C_{\rho}\varphi_V(\alpha)<r_0$, where $r_0$ is the solution of the equation~\eqref{r_0}. 

Then, if $\mu$ and $\mu'$ are Gibbs measures supported on finite cycle permutations we have $\mu=\mu'$.
\end{lema}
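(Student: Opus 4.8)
The plan is to mirror the proof of the discrete uniqueness Lemma~\ref{unicidad-discreto}, now in the continuum, exploiting that the existence of an increasing sequence of separating sets under $\mu\otimes\mu'$ has just been established (the continuum analogue of Lemma~\ref{mu-times-mu(A)=1}). As in the discrete case, it suffices to show $\mu(B)=\mu'(B)$ for every local event $B$, that is, every $B$ depending only on the action of the permutation on the points of $\Omega$ lying in some fixed compact $\Lambda\Subset\R^d$. Since a continuum separating set is taken of the form $\Psi^{-1}(\Delta)$ for a discrete separating set $\Delta\Subset\Z^d$, the relevant family is countable and inherits the partial order from $\Z^d$; together with the $\mu\otimes\mu'$-almost sure existence of arbitrarily large separating sets, this guarantees that almost surely there is a smallest one containing $\Lambda$. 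I would then let $J_\Lambda(\Delta)$ be the event that $\Delta$ is this first separating set containing $\Lambda$, so that $\sum_{\Delta\supset\Lambda}\mu\otimes\mu'(J_\Lambda(\Delta))=1$.

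The heart of the argument is the symmetry identity
\[
G_{\Omega,\Delta}^{\xi}\otimes G_{\Omega,\Delta}^{\xi'}\big((B\times S_{\Omega}^F)\cap J_{\Lambda}(\Delta)\big)
= G_{\Omega,\Delta}^{\xi}\otimes G_{\Omega,\Delta}^{\xi'}\big((S_{\Omega}^F\times B)\cap J_{\Lambda}(\Delta)\big),
\]
valid for all finite cycle boundary conditions $\xi,\xi'$. When $\Delta$ fails to separate $(\xi,\xi')$, both sides vanish because $J_\Lambda(\Delta)$ cannot hold. When $\Delta$ does separate $(\xi,\xi')$, no cycle of $\xi$ or $\xi'$ crosses $\partial\Delta$ (here the condition $\partial\Delta\cap\Omega=\emptyset$ in the definition of a continuum separating set is essential), so the specification inside $\Delta$ sees no frozen cross-boundary cycle and the boundary conditions act exactly as the identity; I would then replace $\xi,\xi'$ by $\text{id}$ and invoke the invariance of both $J_\Lambda(\Delta)$ and $G_{\Omega,\Delta}^{\text{id}}\otimes G_{\Omega,\Delta}^{\text{id}}$ under the coordinate swap $(\s,\s')\mapsto(\s',\s)$. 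Integrating this identity against $\mu\otimes\mu'$, using the Gibbs property and that $\mu,\mu'$ concentrate on finite cycle permutations, and finally summing over $\Delta$ yields $\mu(B)=\mu'(B)$, hence $\mu=\mu'$.

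The step I expect to require the most care is the continuum separating-set bookkeeping rather than any new estimate: verifying that the sets $\Psi^{-1}(\Delta)$ are genuine compact separating sets with $\partial\Delta\cap\Omega=\emptyset$, that a well-defined first such set exists almost surely, and that when $\Delta$ separates $(\xi,\xi')$ the replacement of $\xi,\xi'$ by the identity is legitimate. All the probabilistic content, namely the absence of percolation and the consequent existence of separating sets, has already been imported from the discrete model through the coupling of the free processes, so no fresh combinatorial computation is needed here, which is fortunate since ordered supports, the main tool in the discrete estimates, are meaningless in the continuum.
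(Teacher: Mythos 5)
Your proposal is correct and follows essentially the same route as the paper: the paper's own proof consists precisely of the observation that the argument of the discrete uniqueness lemma (Lemma~\ref{unicidad-discreto}) applies verbatim once the continuum analogue of Lemma~\ref{mu-times-mu(A)=1} (existence of arbitrarily large separating sets under $\mu\otimes\mu'$) is in hand. Your additional care about the continuum bookkeeping --- that separating sets are of the form $\Psi^{-1}(\Delta)$ with $\partial\Delta\cap\Omega=\emptyset$, so the family is countable and a first one containing $\Lambda$ exists, and that boundary conditions separated by $\Delta$ act as the identity --- is exactly what justifies the paper's ``applies without changes.''
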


\begin{proof} The proof of Lemma~\eqref{unicidad-discreto} applies without changes.
\end{proof}

\section*{Appendix}

In this appendix we prove a bound for the number of cycles that have the same ordered support. Recall that the ordered support of a cycle $\g$, defined in~\eqref{ord-supp}, is the projection of $\g$ to $\Z^d$ erasing consecutive repetitions of sites. 

Let $\bar{y}\in (\Z^d)^m$ such that $\bar{y}_i\neq \bar{y}_{i+1}$. It is a possible ordered support. Remember that $N_{\theta}(\bar{y})$ is the number of cycles such that its ordered support is $\bar{y}$. Write $\{\bar{y}\}$ for coordinates of $\bar{y}$ without repetitions. For $z\in \{\bar{y}\}$ let $k_z(\bar{y})= \# \{i \colon y_i=z\}$ be the number of times that $z$ appears in the ordered support $\bar{y}$, i.e., the effective uses of site $z$ in the sense that computes non trivially for the Hamiltonian. If $\bar{y}$ is such that $k_z>\theta(z)$ for some $z\in\{\bar{y}\}$, then the number of cycles that has $\bar{y}$ as ordered support is zero. It is the same if $\theta(z)=0$ for any $z\in \{\bar{y}\}$. Therefore, suppose that $\bar{y}$ is such that $0<k_z\leq \theta(z)$ for all $z\in \{\bar{y}\}$. 

A cycle $\g$ with $[\g]=\bar{y}$ can use the site $z$ at least $k_z$ times and at most $\theta(z)$ times. Let $a\in\{k_z,\dots,\theta(z)\}$ be the number of times that $\g$ uses $z$. There are $\binom{\theta(z)}{a} a!$ ways to choose $a$ different points in $\Omega_{\theta}$ located at $z$. Then, the $a$ different points have to locate at $k_z$ coordinates of $\bar{y}=[\g]$ corresponding to place $z$, and they must use all of $k_z$ coordinates. Thus, there are $\binom{a-1}{k_z-1}$ ways to do it (it is the same problem that put $a$ balls in $k_z$ boxes using at least one ball per box). So, for the number of cycles that has $\bar{y}$ as ordered support we have 
\begin{equation} \label{cotaM(y)} N_{\theta}(\bar{y}) \leq \sum\limits_{\substack{k_1 \leq a_1 \leq \theta(z_1)\\
 \vdots \\
 k_{l} \leq a_{l} \leq \theta(z_{l})}} \prod_{j=1}^{l} \binom{\theta(z_j)}{a_j} a_j! \binom{a_j-1}{k_j-1} \leq \prod_{j=1}^{l} \left( \frac{e^{\frac{1}{2}}}{2} \, \theta(z_j)! \, 2^{\theta(z_j)} \one_{\{\theta(z_j)\neq 0\}} \right) := M_{\theta}(\bar{y})\,,
\end{equation} 
where $\{\bar{y}\}=\{z_1,\dots,z_l\}$ and $k_j=k_{z_j}$. 
 
For density $\rho\in(0,1/2)$, the expectation under $\P$ of the upper bound $M_{\theta}(\bar{y})$ given in~\eqref{cota-expectation-M(y)}, can be computed using the independence of multiplicities and their distribution. Indeed, 
\begin{align*}
\E[M_{\theta}(\bar{y})] = \E\left[\frac{e^{\frac{1}{2}}}{2} \theta(z)!\,2^{\theta(z)} \one_{\{\theta(z)\neq 0 \}} \right]^{l} = \left(\sum\limits_{i\geq 1} \frac{e^{\frac{1}{2}}}{2} 2^{i} \, e^{-\rho}\, \rho^i \right)^l \leq \left(\frac{\rho e^{-\rho+\frac{1}{2}}}{1-2\rho}\right)^{|\bar{y}|} \,.
\end{align*}

\begin{obs} \label{cota-indep-orden} The upper bound $M_{\theta}(\bar{y})$ does not depend on the relative order for coordinates of $\bar{y}$. So, if we need an upper bound for the number of pairs of cycles $(\g,\g')$ such that $[\g]= \bar{y}$ and $[\g']=\bar{y}'$, we use the upper bound $M_{\theta}(\bar{y}\bar{y}')$, where $\bar{y}\bar{y}'$ is a concatenation of vectors such that $(\bar{y},\bar{y}')$ is an ordered support. 
\end{obs}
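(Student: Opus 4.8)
The remark contains two independent claims, and I would prove them in turn.

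\textbf{Order-independence.} The first assertion is immediate from the closed form of the bound in~\eqref{cotaM(y)}. Writing $\{\bar{y}\}=\{z_1,\dots,z_l\}$ for the set of distinct sites occurring among the coordinates of $\bar{y}$, and setting $c_z:=\frac{e^{1/2}}{2}\,\theta(z)!\,2^{\theta(z)}\one_{\{\theta(z)\neq 0\}}$, the definition reads $M_{\theta}(\bar{y})=\prod_{z\in\{\bar{y}\}}c_z$. This is a product over the \emph{unordered} set $\{\bar{y}\}$, and each factor $c_z$ depends only on the multiplicity $\theta(z)$, not on how many times (the quantity $k_z(\bar{y})$) or in which positions the site $z$ occurs. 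Permuting the coordinates of $\bar{y}$ changes neither the set $\{\bar{y}\}$ nor any $\theta(z)$, so $M_{\theta}$ is invariant. In short, the only information about $\bar{y}$ that $M_{\theta}$ retains is the set of visited sites together with their multiplicities, which is exactly what makes the concatenation in the second assertion meaningful.

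\textbf{Reduction of the pair count to a product over sites.} For the second assertion I would first recall the structure of the bound in~\eqref{cotaM(y)}: realizing a cycle with prescribed ordered support amounts to choosing, independently at each visited site $z$, an assignment of distinct points of $\Omega_\theta$ at $z$ to the cyclically consecutive runs corresponding to the occurrences of $z$; the number of such local choices is $S_{k}:=\sum_{a=k}^{\theta(z)}\binom{\theta(z)}{a}a!\binom{a-1}{k-1}$ with $k=k_z(\bar{y})$, and this is bounded by $c_z$ \emph{uniformly} in $k\in\{1,\dots,\theta(z)\}$. A pair $(\g,\g')$ with $[\g]=\bar{y}$, $[\g']=\bar{y}'$ that uses disjoint sets of points (the relevant case, since two cycles coexisting in a configuration must be point-disjoint) is then described site by site by two disjoint families of runs at $z$, one of $k_z(\bar{y})$ runs for $\g$ and one of $k_z(\bar{y}')$ runs for $\g'$. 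The global count therefore factorizes over $z\in\{\bar{y}\}\cup\{\bar{y}'\}=\{\bar{y}\bar{y}'\}$, and the plan is to reduce the whole statement to a purely local inequality at each site.

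\textbf{The local identity and conclusion.} At a fixed site $z$ put $k=k_z(\bar{y})$, $k'=k_z(\bar{y}')$ and $K=k+k'$, the multiplicity of $z$ in the concatenation. Letting $b,b'$ be the number of points $\g,\g'$ use at $z$, the local pair count is $\sum_{b\ge k,\,b'\ge k'}\frac{\theta(z)!}{(\theta(z)-b-b')!}\binom{b-1}{k-1}\binom{b'-1}{k'-1}$, and the Vandermonde convolution $\sum_{b+b'=a}\binom{b-1}{k-1}\binom{b'-1}{k'-1}=\binom{a-1}{K-1}$ collapses it to $\sum_{a=K}^{\theta(z)}\frac{\theta(z)!}{(\theta(z)-a)!}\binom{a-1}{K-1}=S_{K}$, which is precisely the single-cycle local count for occurrence multiplicity $K$. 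Hence the local pair count equals $S_K\le c_z$, and taking the product over $z\in\{\bar{y}\bar{y}'\}$ bounds the number of disjoint pairs by $\prod_{z\in\{\bar{y}\bar{y}'\}}c_z=M_{\theta}(\bar{y}\bar{y}')$; the order-independence of the first part guarantees this does not depend on how $\bar{y}$ and $\bar{y}'$ are interleaved, so the plain concatenation is legitimate, and the expectation bound~\eqref{cota-expectation-M(y)} then applies verbatim with $|\bar{y}\bar{y}'|$ in the exponent. The only genuinely delicate step is the bookkeeping that forces the two cycles to draw \emph{disjoint} points at each shared site; once this is set up, the Vandermonde identity is what makes the combined local count coincide with the single-cycle count and collapses the pair estimate to the concatenation.
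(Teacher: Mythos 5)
Your first paragraph is correct, and it is exactly the (unwritten) justification the paper has in mind: $M_{\theta}(\bar{y})=\prod_{z\in\{\bar{y}\}}c_z$ with $c_z=\frac{e^{1/2}}{2}\,\theta(z)!\,2^{\theta(z)}\one_{\{\theta(z)\neq 0\}}$ retains only the \emph{set} of visited sites, because the per-site count $S_k:=\sum_{a=k}^{\theta(z)}\binom{\theta(z)}{a}a!\binom{a-1}{k-1}$ is bounded by $c_z$ uniformly in the number of visits $k\geq 1$; hence permuting coordinates changes nothing. Your Vandermonde computation is also correct, and it is sharper than anything the paper records (the remark is asserted without proof): for \emph{point-disjoint} pairs the local count at a shared site is exactly $S_{k+k'}$, the single-cycle count at visit multiplicity $k+k'$, which collapses the pair bound to $M_{\theta}(\bar{y}\bar{y}')$.

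The gap is the reduction to disjoint pairs. You justify it by saying that two cycles coexisting in a configuration must be point-disjoint, but this is false precisely where the remark is invoked, namely in the proof of Lemma~\ref{no-percolacion-nu-times-nu}: there the pair $(\eta,\eta')$ is sampled from $\nu_{\theta}\otimes\nu_{\theta}$, and $\nu_{\theta}$ is the invariant measure of the \emph{free} process on $\mathbb{N}_0^{\Gamma_{\theta}}$, which carries no exclusion rule, so overlapping cycles can coexist even within a single coordinate; moreover, a cycle of $\eta$ and a cycle of $\eta'$ live in different configurations and may share points regardless of any exclusion. For unrestricted pairs the claimed bound actually fails: take $\theta(z)=2$, $\theta(w)=1$, $\bar{y}=\bar{y}'=(z,w)$; then $N_{\theta}(\bar{y})=S_1(z)S_1(w)=4$, so there are $12$ ordered pairs of distinct cycles, while $M_{\theta}(\bar{y}\bar{y}')=c_zc_w=4e\approx 10.9$. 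Nor can one simply fall back on $M_{\theta}(\bar{y})M_{\theta}(\bar{y}')$, which puts $c_z^2$ at shared sites, since $\E[c_z^2]=\frac{e}{4}e^{-\rho}\sum_{i\geq 1} i!\,(4\rho)^i=\infty$ for Poisson multiplicities, destroying the annealed estimate~\eqref{cota-expectation-M(y)}. So your argument proves the remark only under a disjointness hypothesis that the statement does not contain and that its application in the percolation lemma does not satisfy; since that hypothesis appears to be necessary for the bound to hold at all, what you have really done is expose a hidden assumption in the paper's remark — but as a proof of the statement as stated and as used, the reduction step is a genuine gap.
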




\bibliographystyle{alpha}

\begin{thebibliography}{AFGL15}

\bibitem[AFGL15]{AFGL}
I.~Armend{\'a}riz, P.~A. Ferrari, P.~Groisman, and F.~Leonardi.
\newblock Finite cycle {G}ibbs measures on permutations of {${\Bbb Z}^d$}.
\newblock {\em J. Stat. Phys.}, 158(6):1213--1233, 2015.

\bibitem[Bet14]{Betz-criterio-existencia}
V.~Betz.
\newblock Random permutations of a regular lattice.
\newblock {\em J. Stat. Phys.}, 155(6):1222--1248, 2014.

\bibitem[BR15]{Biskup-Rich}
M.~Biskup and T.~Richthammer.
\newblock Gibbs measures on permutations over one-dimensional discrete point
  sets.
\newblock {\em Ann. Appl. Probab.}, 25(2):898--929, 2015.

\bibitem[BU09]{Betz-Ueltschi-CMP}
V.~Betz and D.~Ueltschi.
\newblock Spatial random permutations and infinite cycles.
\newblock {\em Comm. Math. Phys.}, 285(2):469--501, 2009.

\bibitem[BU11a]{Betz-Ueltschi-PD-EJP}
V.~Betz and D.~Ueltschi.
\newblock Spatial random permutations and {P}oisson-{D}irichlet law of cycle
  lengths.
\newblock {\em Electron. J. Probab.}, 16:1173--1192, 2011.

\bibitem[BU11b]{Betz-Ueltschi-PTRF}
V.~Betz and D.~Ueltschi.
\newblock Spatial random permutations with small cycle weights.
\newblock {\em Probab. Theory Related Fields}, 149(1-2):191--222, 2011.

\bibitem[Fey53]{Feynman53}
R.~P. Feynman.
\newblock Atomic theory of the $\lambda$ transition in helium.
\newblock {\em Phys. Rev.}, 91:1291--1301, 1953.

\bibitem[FFG01]{FFG}
R.~Fern{\'a}ndez, P.~A. Ferrari, and N.~L. Garcia.
\newblock Loss network representation of {P}eierls contours.
\newblock {\em Ann. Probab.}, 29(2):902--937, 2001.

\bibitem[FFG02]{FFG-spa}
R~Fern{\'a}ndez, P.~A. Ferrari, and N.~L. Garcia.
\newblock Perfect simulation for interacting point processes, loss networks and
  {I}sing models.
\newblock {\em Stochastic Process. Appl.}, 102(1):63--88, 2002.

\bibitem[GLU12]{Grosskinsky-Ueltschi}
S.~Grosskinsky, A.~A. Lovisolo, and D.~Ueltschi.
\newblock Lattice permutations and {P}oisson-{D}irichlet distribution of cycle
  lengths.
\newblock {\em J. Stat. Phys.}, 146(6):1105--1121, 2012.

\bibitem[GRU07]{Gandolfo-Ruiz-Ueltschi}
D.~Gandolfo, J.~Ruiz, and D.~Ueltschi.
\newblock On a model of random cycles.
\newblock {\em J. Stat. Phys.}, 129(4):663--676, 2007.

\bibitem[GUW11]{GUW-quantum}
C.~Goldschmidt, D.~Ueltschi, and P.~Windridge.
\newblock Quantum {H}eisenberg models and their probabilistic representations.
\newblock In {\em Entropy and the quantum {II}}, volume 552 of {\em Contemp.
  Math.}, pages 177--224. Amer. Math. Soc., Providence, RI, 2011.

\bibitem[MR96]{Meester-Roy}
R.~Meester and R.~Roy.
\newblock {\em Continuum percolation}, volume 119 of {\em Cambridge Tracts in
  Mathematics}.
\newblock Cambridge University Press, Cambridge, 1996.

\bibitem[Sch05]{Schramm-trasposiciones}
O.~Schramm.
\newblock Compositions of random transpositions.
\newblock {\em Israel J. Math.}, 147:221--243, 2005.

\bibitem[S{\"u}t93]{Suto1}
A.~S{\"u}t{\H{o}}.
\newblock Percolation transition in the {B}ose gas.
\newblock {\em J. Phys. A}, 26(18):4689--4710, 1993.

\bibitem[S{\"u}t02]{Suto2}
A.~S{\"u}t{\H{o}}.
\newblock Percolation transition in the {B}ose gas. {II}.
\newblock {\em J. Phys. A}, 35(33):6995--7002, 2002.

\bibitem[T{\'o}t93]{Toth}
B.~T{\'o}th.
\newblock Improved lower bound on the thermodynamic pressure of the spin
  {$1/2$} {H}eisenberg ferromagnet.
\newblock {\em Lett. Math. Phys.}, 28(1):75--84, 1993.

\end{thebibliography}

\end{document}